\newcommand{\R}{\mathbb{R}}
\renewcommand{\[}{\left[}
\renewcommand{\]}{\right]}
\renewcommand{\(}{\left(}
\renewcommand{\)}{\right)}
\newcommand{\crits}{2^\star(s)}
\newcommand{\crit}{2^\star}
\newcommand{\rr}{\mathbb{R}}
\newtheorem{step}{Step}[section]
\newtheorem{theorem}{Theorem}[section]
\newtheorem{proposition}[theorem]{Proposition}
\newtheorem{lemma}[theorem]{Lemma}
\newtheorem{cor}[theorem]{Corollary}
\newtheorem{remark}[theorem]{Remark}
\title[Sharp profiles of singular solutions to elliptic equations: Examples]{Existence of sharp asymptotic profiles of singular solutions to an elliptic equation with a sign-changing non-linearity}
\date{December 14, 2018}
\author{Florica C. C\^irstea}
\address{Florica C. C\^irstea, School of Mathematics and Statistics, The University of Sydney, NSW 2006, Australia} 
\email{florica.cirstea@sydney.edu.au}
\author{Fr\'ed\'eric Robert}
\address{Fr\'ed\'eric Robert, Institut \'Elie Cartan, Universit\'e de Lorraine, BP 70239, F-54506 Vand{\oe}uvre-l\`es-Nancy, France}
\email{frederic.robert@univ-lorraine.fr}
\author{J\'er\^ome V\'etois}
\address{J\'er\^ome V\'etois, McGill University, Department of Mathematics and Statistics, 805 Sherbrooke Street West, Montreal, Quebec H3A 0B9, Canada.}
\email{jerome.vetois@mcgill.ca}
\begin{document}

\begin{abstract}
The first two authors [Proc. Lond. Math. Soc. (3) {\bf 114}(1):1--34, 2017] classified the behaviour near zero for all positive solutions of the perturbed elliptic equation with a critical Hardy--Sobolev growth
	$$-\Delta u=|x|^{-s} u^{\crits-1} -\mu u^q \hbox{ in }B\setminus\{0\},$$
where $B$ denotes the open unit ball centred at $0$ in $\R^n$ for $n\geq 3$, $s\in (0,2)$, $2^\star(s):=2(n-s)/(n-2)$, $\mu>0$ and $q>1$. 
For $q\in (1,\crit-1)$ with $\crit=2n/(n-2)$, it was shown in the op. cit. that the positive solutions with a non-removable singularity at $0$ 
could exhibit up to three different singular profiles, although their existence was left open. In the present paper, we settle this question for all three singular profiles in the maximal possible range. 
As an important novelty for $\mu>0$, we prove that for every $q\in (\crits -1,\crit-1)$ there exist infinitely many 
positive solutions satisfying $|x|^{s/(q-\crits+1)}u(x)\to \mu^{-1/(q-\crits+1)}$ as $|x|\to 0$, using a dynamical system approach. Moreover, 
we show that there exists a
positive singular solution with $\liminf_{|x|\to 0} |x|^{(n-2)/2} u(x)=0$ and 
 $\limsup_{|x|\to 0} |x|^{(n-2)/2} u(x)\in (0,\infty)$ if (and only if) $q\in (\crit-2,\crit-1)$. 
\end{abstract}

\maketitle

\section{Introduction and main results}\label{Sec1}
The Hardy--Sobolev inequality is obtained by interpolating between the Sobolev inequality ($s=0$) and the Hardy inequality
($s=2$): For every $s\in (0,2)$ and $n\geq 3$, there exists a positive constant $K_{s,n}$ such that
$$ \int_{\R^n} |\nabla u|^2\,dx \geq  K_{s,n} \left( \int_{\R^n} |x|^{-s} |u|^{\crits} \,dx \right)^{\frac{2}{\crits}} \quad \text{for all } u\in C^\infty_c(\R^n),
$$
where $ \crits:=2(n-s)/(n-2)$ denotes the critical Hardy--Sobolev exponent. 
The critical Sobolev exponent $\crit$ corresponds to $\crits$ with $s=0$. 
Recent results and challenges on the Hardy--Sobolev inequalities are surveyed by Ghoussoub--Robert in \cite{GR1}, see also \cite{GR3}. For $s\in (0,2)$, the best Hardy--Sobolev constant $K_{s,n}$ is attained by a one-parameter family $(U_\eta)_{\eta>0}$ 
of functions 
\begin{equation} \label{ulamb} U_\eta(x):= c_{n,s} \,\eta^{\frac{n-2}{2}} \left( \eta^{2-s}+|x|^{2-s}\right)^{-\frac{n-2}{2-s}}\quad \text{for } x\in\R^n, \end{equation} where $c_{n,s}:=\left((n-s)(n-2)\right)^{1/(\crits-2)}$ is a positive normalising constant. The functions $U_\eta$ are the only positive {\em non-singular} solutions of the equation (see Chen--Lin \cite{ChenLin} and Chou--Chu \cite{cc})
\begin{equation} \label{rnsol} -\Delta U=|x|^{-s} U^{\crits-1} \quad \text{in  } \R^n\setminus\{0\}. \end{equation} 
Moreover, any positive $C^2(\R^n\setminus\{0\})$ {\em singular} solution $U$ of \eqref{rnsol} is radially symmetric around $0$ and $v(t)=e^{-(n-2)t/2} U(e^{-t})$ is a positive periodic function of $t$ in $\R$ (see Hsia--Lin--Wang \cite{HLW}). 

The isolated singularity problem has been studied extensively, see V\'eron's monograph \cite{Ve}. Recent works of the first author and her collaborators such as \cites{CCi,FC,CirRob} give a full classification of the isolated singularities for various classes of elliptic equations. 

In this paper, we settle an open question arising from \cite{CirRob} with regard to the {\em existence} of all the singular profiles at zero for the positive solutions of the perturbed non-linear elliptic equation
\begin{equation}\label{Eq0}
-\Delta u=|x|^{-s} u^{\crits-1}-\mu u^q \qquad \text{for } x\in B(0,R)\setminus\{0\},\end{equation}
where $\mu$ is a {\em positive} parameter, $q>1$ and 
$s\in (0,2)$. By $B(0,R)$ we denote the open ball in $\R^n$ $(n\geq 3)$ centred at $0$ with radius $R>0$. 
%In this paper, we complement the work in \cite{CirRob} by studying the {\em existence} of positive solutions for a perturbed 
%elliptic problem with a critical Hardy--Sobolev growth in a punctured open ball $B(0,R)\setminus\{0\}$ in $\R^n$ with $n\geq 3$. 
The first two authors have proved in \cite{CirRob} that 
 the positive singular solutions of \eqref{Eq0} can exhibit up to 
{\em three} types of singular profiles at zero in a suitable range for $q$: 

$\bullet$ A {\bf (ND) type }profile (for ``Non Differential") if 
$$\lim_{|x|\to 0}|x|^{\frac{s}{q-(\crits-1)}}u(x)=\mu^{-\frac{1}{q-(\crits-1)}}.\eqno{(ND)}$$

$\bullet$ A profile of {\bf (MB) type} (for ``Multi-Bump") in the sense that there exists a sequence $(r_k)_{k\geq 0}$ of positive numbers decreasing to $0$ such that $r_{k+1}=o(r_k)$ as $k\to +\infty$ and 
\begin{equation*}
u(x)=\left(1+o(1)\right)\sum_{k=0}^\infty U_{r_k}(x)
\hbox{ as }|x|\to 0,\hbox{ where }U_\eta\hbox{ is as in } \eqref{ulamb}.\eqno{(MB)}
\end{equation*}

$\bullet$ A profile of {\bf (CGS) type}  (for ``Caffarelli--Gidas--Spruck") if there exists a positive periodic function $v\in C^\infty(\R)$ such that
$$\lim_{|x|\to 0}\left(|x|^{\frac{n-2}{2}}u(x)-v(-\log  |x|)\right)=0.\eqno{(CGS)}$$

\noindent The case $q=\crit-1$ in \eqref{Eq0} was fully dealt with in \cite{CirRob}. Hence, in the sequel we assume that $q\not=\crit-1$. 
We recall the relevant classification result from \cite{CirRob}:
%The first two authors proved the following:
\begin{theorem}[\cite{CirRob}]\label{thm:1} Let $u\in C^\infty(B(0,R)\setminus\{0\})$ be an arbitrary positive solution to \eqref{Eq0}. 
 \begin{itemize}
\item If $q>\crit-1$, then $0$ is a removable singularity;
\item If $\crits-1<q<\crit-1$, then either $0$ is a removable singularity, or $u$ develops a profile of type (CGS), (MB) or (ND);
\item If $1<q\leq \crits-1$, then either $0$ is a removable singularity, or $u$ has a profile of type (CGS) or (MB).
\end{itemize}
Moreover, if $u$ develops a profile of (MB) type, then $2^\star-2<q<2^\star-1$. 
\end{theorem}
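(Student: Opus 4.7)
My plan is to reduce the isolated-singularity problem to a dynamical-system analysis and read off the three possible profiles from the phase portrait. Write $r = |x|$, $t = -\log r$, and set $v(t) := r^{(n-2)/2}\, u(x)$; a direct computation transforms the radial part of \eqref{Eq0} into
\begin{equation*}
v''(t) - \tfrac{(n-2)^2}{4}\,v(t) + v(t)^{\crits-1} - \mu\, e^{-\alpha t}\, v(t)^q \;=\; 0, \qquad \alpha := \tfrac{n+2-(n-2)q}{2}.
\end{equation*}
The sign of $\alpha$ flips exactly at $q = \crit - 1$, which is the critical Sobolev threshold responsible for the removable-versus-non-removable dichotomy of the first bullet.

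The first step is an a priori two-sided control on $u$. Both the weight $|x|^{-s}$ and the negative sign of $-\mu u^q$ are \emph{favourable} for upper bounds, so a Brezis--Kato/Moser iteration (after a Kelvin-type inversion to tame $|x|^{-s}$) yields $u(x) \lesssim \max\bigl(|x|^{-(n-2)/2},\, |x|^{-s/(q-\crits+1)}\bigr)$ near $0$, together with a matching lower bound at non-removable singularities by a Harnack argument. A moving-planes scheme adapted to the Hardy--Sobolev setting (exploiting rotational invariance around $0$) then yields the asymptotic radial symmetry $u(x) = \bar u(|x|)(1+o(1))$ as $|x|\to 0$, with non-radial part decaying strictly faster. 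This reduces the classification to the ODE above.

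For the first bullet ($q > \crit - 1$), $\alpha < 0$ forces the perturbation to grow exponentially; a Brezis--V\'eron-style bootstrap, exploiting the dominance of $\mu u^q$ over the critical term, drives $v \to 0$ fast enough that $0$ becomes removable. For $q < \crit - 1$, $\alpha > 0$ and the perturbation vanishes at $t = +\infty$, so trajectories accumulate on the autonomous Fowler flow $v'' - \tfrac{(n-2)^2}{4}v + v^{\crits-1} = 0$, whose Hamiltonian $H(v,v') = \tfrac12(v')^2 - \tfrac{(n-2)^2}{8}v^2 + \tfrac{v^{\crits}}{\crits}$ gives the classical phase portrait. Nonzero bounded orbits are the Fowler periodic orbits, populating the $\omega$-limit set when $u$ has a (CGS) profile; orbits that follow translates of the homoclinic bubble $U_\eta$ at infinitely many shrinking scales produce (MB). The (ND) profile arises from the balance $|x|^{-s} u^{\crits-1} \sim \mu u^q$, forcing $u \sim \mu^{-1/(q-\crits+1)} |x|^{-s/(q-\crits+1)}$; this scaling is attractive only when the exponent $s/(q-\crits+1) > 0$, i.e.\ $q > \crits - 1$, which explains the presence of (ND) in the second bullet and its absence in the third.

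The main obstacle is the final claim that (MB) forces $\crit - 2 < q < \crit - 1$. The upper bound sits inside the bubbling window already established, but the lower bound $q > \crit - 2$ is delicate. I would integrate a localized Pohozaev identity on the dyadic annulus $\{\sqrt{r_{k+1} r_k} \leq |x| \leq \sqrt{r_k r_{k-1}}\}$ and compute that the cross-term pairing $\mu u^q$ against the tail of the bubble $U_{r_k}$ contributes at order $r_k^{(n-2)(\crit-1-q)/2}$; summability of these contributions across the infinite tree, consistent with $r_{k+1} = o(r_k)$, forces $q > \crit - 2$. This Pohozaev bookkeeping, together with the infinite bubble-tree construction required to even make (MB) rigorous, is what I expect to be the conceptually hardest part; the surrounding analysis is a careful but essentially standard combination of moving planes and ODE dynamics.
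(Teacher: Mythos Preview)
The paper does not prove this theorem. Theorem~\ref{thm:1} is explicitly attributed to \cite{CirRob} (the first two authors' earlier Proc.\ Lond.\ Math.\ Soc.\ paper) and is quoted here only as background: the present paper's purpose is the \emph{converse} direction, namely the existence of each singular profile (Theorems~\ref{Th0} and~\ref{Th1}). There is therefore no proof in this paper against which to compare your proposal.

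That said, your outline is a plausible sketch of the strategy actually carried out in \cite{CirRob}, and several of the ingredients you name (the Emden--Fowler change of variables $v(t)=r^{(n-2)/2}u(r)$ with $t=-\log r$, the Pohozaev-type integral, the autonomous Fowler flow as the limiting system when $q<\crit-1$) do reappear in the present paper, but only as tools for the existence constructions in Sections~\ref{sec-thm2}--\ref{Sec5}, not for classification. A few specific remarks on your sketch: (i) your claim that the (ND) scaling is ``attractive only when $q>\crits-1$'' is stated without justification, and in \cite{CirRob} the exclusion of (ND) for $q\le\crits-1$ is obtained by a more careful a~priori estimate, not by a formal scaling balance; (ii) the lower bound $q>\crit-2$ for (MB) is indeed a Pohozaev computation in \cite{CirRob}, but your dyadic-annulus bookkeeping is only heuristic --- the actual argument tracks the asymptotic Pohozaev integral $P^{(q)}(u)$ and shows it would be strictly negative if $q\le\crit-2$, contradicting \eqref{van}; (iii) you invoke moving planes for asymptotic radial symmetry, which is correct in spirit but requires the Hardy--Sobolev adaptation worked out in \cite{CirRob}. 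In short, your proposal is a reasonable roadmap for the cited result, but there is nothing in the present paper to check it against.
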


However, no examples of the three singular profiles of Theorem~\ref{thm:1} were given in \cite{CirRob}, leaving open the question of their existence. In the present paper, we fill this gap by proving the following: 

\begin{theorem}\label{Th0} The three singular profiles of Theorem \ref{thm:1} actually do exist.
\end{theorem}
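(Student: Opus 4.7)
The plan is to construct each of the three profiles separately. In every case I would restrict to radial positive solutions of \eqref{Eq0} and perform an Emden--Fowler-type change of variable tailored to the target asymptotic behaviour at $|x|=0$.

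For the (ND) profile in the range $q\in(\crits-1,\crit-1)$, let $w(t)=\mu^{1/(q-\crits+1)}r^{s/(q-\crits+1)}u(r)$ with $t=-\log r$. This substitution transforms \eqref{Eq0} into an autonomous second-order ODE for $w$ whose constant $w\equiv 1$ is an equilibrium, and the (ND) condition is exactly $w(t)\to 1$ as $t\to+\infty$. Linearising at this equilibrium, the restriction $q>\crits-1$ ensures that the drift term has the right sign to make the equilibrium attracting (a stable node or focus) in the planar phase space. A stable-manifold theorem, or equivalently a contraction argument in an exponentially weighted norm, then furnishes an open two-parameter family of orbits limiting to $w\equiv 1$, and reverting the change of variables produces the infinite family of singular solutions of (ND) type announced in the abstract.

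For the (CGS) profile, the appropriate ansatz is $u(r)=r^{-(n-2)/2}v(t)$ with $t=-\log r$, which converts \eqref{Eq0} into
$$v''-\frac{(n-2)^2}{4}v+v^{\crits-1}=\mu\,e^{-\gamma t}v^q,\qquad \gamma=\frac{n-2}{2}(\crit-1-q)>0,$$
so that the forcing is exponentially small at $t=+\infty$ precisely because $q<\crit-1$. The unperturbed equation ($\mu=0$) admits the one-parameter family of positive periodic Fowler solutions recalled in the introduction. I would construct a (CGS) solution of the perturbed problem either by a shooting argument (tuning the phase-plane initial data so the trajectory is trapped between two periodic Fowler orbits for all large $t$) or by a fixed-point iteration around a chosen Fowler orbit in a suitable weighted space; the exponentially small forcing is what makes either scheme converge.

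For the (MB) profile, which by Theorem~\ref{thm:1} can only occur in the range $q\in(\crit-2,\crit-1)$, I would follow a bubble-tower strategy. Choose a rapidly decreasing sequence of scales $r_k\to 0$ and look for a solution of the form $u=\sum_{k\geq 0}U_{r_k}+\phi$ with $\phi$ a small remainder. The lower bound $q>\crit-2$ is what ensures that the error produced by the perturbation $\mu u^q$ evaluated on the approximate solution is summable across the bubbles, while the upper bound $q<\crit-1$ prevents removability. A Lyapunov--Schmidt reduction modulo the one-dimensional approximate kernel generated by the scaling mode of each $U_{r_k}$, coupled with a fixed-point argument for the reduced equations in the sequence of scales $r_k$, should deliver the desired tower. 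I expect the (MB) construction to be the main technical obstacle: the interactions between consecutive bubbles $U_{r_k}$ and $U_{r_{k+1}}$ must be estimated sharply enough to close the reduced system, and the remainder $\phi$ has to be controlled in a norm that detects the behaviour at every scale uniformly in $k$, which is where the delicate interplay between $q$ and the decay rate $r_{k+1}/r_k$ enters.
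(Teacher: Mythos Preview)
Your (CGS) outline is correct and coincides with the paper's route: the Fowler substitution $u(r)=r^{-(n-2)/2}v(-\log r)$ turns $\mu u^q$ into an exponentially decaying forcing, and a contraction around a chosen periodic Fowler orbit does the job.

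For (ND) there is a genuine gap. The substitution $w(t)=\mu^{1/(q-\crits+1)}r^{\vartheta}u(r)$, $t=-\log r$, $\vartheta=s/(q-\crits+1)$, does \emph{not} produce an autonomous ODE. A direct computation gives
$$\ddot w+(2\vartheta+2-n)\dot w-\vartheta(n-2-\vartheta)w = C\,e^{2\beta t}\bigl(w^{q}-w^{\crits-1}\bigr),\qquad \beta=\frac{\vartheta(q-1)}{2}-1,$$
and one checks that $\beta>0$ throughout $q\in(\crits-1,\crit-1)$, so the nonlinear term carries an exponentially \emph{growing} coefficient as $t\to+\infty$. Moreover $w\equiv 1$ is not an equilibrium: the linear term $-\vartheta(n-2-\vartheta)w$ does not vanish there. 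The point is that the (ND) leading order comes from the \emph{algebraic} cancellation $|x|^{-s}u^{\crits-1}=\mu u^q$ on the right-hand side, with the Laplacian subdominant --- this is precisely why the profile is called ``Non Differential''. The paper's remedy is a new three-dimensional autonomous reformulation (coordinates $X_1=t(1-\mu r^s u^{q-\crits+1})$, $X_2=1/t$, $X_3=ru'/u+\vartheta$ with $t=r^{-\beta}$); the linearisation at the rest point then has one positive, one negative, and one \emph{zero} eigenvalue, so the stable-manifold and Hartman--Grobman picture you invoke is unavailable and a centre-stable manifold construction is required. Your planar stable-node claim cannot be salvaged in this form.

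For (MB) your Lyapunov--Schmidt tower is a conceivable alternative, but you flag it yourself as the main obstacle and give no mechanism to close the infinite reduced system. The paper avoids gluing entirely: it takes regular radial solutions $u_{\gamma_i}$ with $u_{\gamma_i}(0)=\gamma_i\to\infty$, proves a uniform bound $r^{(n-2)/2}u_{\gamma_i}\le\Lambda$ via a Pohozaev-type identity, passes to a $C^2_{\rm loc}$ limit $u_\infty$, and uses the asymptotic Pohozaev integral ($P^{(q)}(u_\infty)=0$) to exclude the (CGS) alternative and force (MB). The constraint $q>\crit-2$ enters only in showing that the limit is genuinely singular at $0$, through an oscillation estimate in the Fowler variable. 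This compactness argument, inspired by Chen--Lin, is both shorter and more robust than an infinite-tower reduction.
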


The existence assertion of Theorem \ref{Th0} is a corollary of the following precise result:
\begin{theorem}\label{Th1} Equation \eqref{Eq0} admits positive radially symmetric solutions developing  (CGS), (MB) and (ND) profiles in the exact range of parameters given by Theorem \ref{thm:1}. More precisely, when $q\in\(1,2^\star-1\)$,  
there exists $R_0>0$ such that for every $R\in\(0,R_0\)$, the following hold:
\begin{itemize}
\item[{\rm (i)}] For every $\gamma>0$, there exists a unique positive radial solution 
$u_\gamma$ of \eqref{Eq0} with a removable singularity at $0$ and 
$\lim_{|x|\to 0}u_\gamma\(x\)=\gamma$. 
\item[{\rm (ii)}] If $q>2^\star-2$, then \eqref{Eq0} has at least a positive (MB) solution. 
\item[{\rm (iii)}] For every positive singular solution $U$ of \eqref{rnsol},  
there exists a unique positive radial (CGS) solution $u$ of \eqref{Eq0} with asymptotic profile $U$ near zero.
\item[{\rm (iv)}] If $q>2^\star\(s\)-1$, then \eqref{Eq0} admits infinitely many positive (ND) solutions.
\end{itemize}
\end{theorem}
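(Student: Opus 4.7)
The common setup is to restrict to radial solutions $u(x) = u(r)$ and apply the Emden--Fowler change of variables $v(t) := r^{(n-2)/2} u(r)$, $t := -\log r$, which turns the radial version of \eqref{Eq0} into the non-autonomous ODE
\begin{equation*}
v'' - \tfrac{(n-2)^2}{4}\,v + v^{\crits-1} - \mu\, e^{-\sigma t} v^q = 0, \qquad t > -\log R,
\end{equation*}
with $\sigma := (n+2-(n-2)q)/2 > 0$ since $q < \crit-1$. The singularity $|x|\to 0$ becomes $t\to+\infty$ and the subcritical perturbation is exponentially small there, so each of (i)--(iv) reduces to producing an orbit with a prescribed asymptotic behaviour at $+\infty$.

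For part (i), I would solve the integral form of the radial equation with initial data $u(0)=\gamma$, $u'(0)=0$ by a Banach contraction in $C\([0,R]\)$, taking $R_0$ small to absorb both nonlinear terms. Part (iii) is a perturbation of the limit autonomous equation $v''-\tfrac{(n-2)^2}{4}v+v^{\crits-1}=0$, whose positive periodic orbits are in one-to-one correspondence with the singular (CGS) solutions $U$ of \eqref{rnsol}. Given such a $U$ with profile $v_0$, I would seek $v = v_0+w$ with $w,w'\to 0$ as $t\to+\infty$. Because the resulting forcing $\mu e^{-\sigma t} v_0^q$ decays exponentially, a contraction for $w$ in the weighted norm $\sup_{t\ge T} e^{\eta t}(|w|+|w'|)$ (with $0<\eta<\sigma$ and $T$ large) built from the Green's kernel of the Hill operator $\xi\mapsto\xi''-\tfrac{(n-2)^2}{4}\xi+(\crits-1)v_0^{\crits-2}\xi$, combined with a shooting adjustment of the two free parameters (phase and energy) of the orbit to kill the bounded Floquet modes, yields the desired $w$.

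Part (iv) is the dynamical-system construction announced in the abstract. The (ND) profile in Emden--Fowler variables reads $\bar v(t):=c\,e^{\beta t}$ with $c:=\mu^{-1/(q-\crits+1)}$ and $\beta := s/(q-\crits+1)-(n-2)/2 > 0$ (this positivity is exactly the condition $q\in\(\crits-1,\crit-1\)$). Writing $v = \bar v\cdot z$ and rescaling time by $d\tau/dt = e^{\nu t}$, $\nu := (\crits-2)\beta/2 > 0$, puts the ODE in the form
\begin{equation*}
y''(\tau) + c^{\crits-2}\(y^{\crits-1}-y^q\) = \varepsilon(\tau)\,F(y,y'),
\end{equation*}
where $y(\tau):=z(t(\tau))$ and $\varepsilon(\tau),F$ decay exponentially as $\tau\to+\infty$. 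The autonomous limit admits $(y,y')=(1,0)$ as a hyperbolic saddle with eigenvalues $\pm\sqrt{c^{\crits-2}(q-\crits+1)}$, nonzero precisely because $q>\crits-1$. A standard non-autonomous stable-manifold theorem then yields, at each large $\tau_0$, a local one-dimensional stable manifold $\mathcal W^s(\tau_0)$; reversing the changes of variables, every point of $\mathcal W^s(\tau_0)$ produces a distinct positive radial solution of \eqref{Eq0} on $(0,R)$ with the (ND) profile, hence the claimed infinite family.

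Part (ii) is the main obstacle. Since the bubbles $U_\eta$ are radial about the origin (translations are broken by the weight $|x|^{-s}$), the natural scheme is an infinite Lyapunov--Schmidt/gluing construction: for scales $r_k\to 0$ with $r_{k+1}=o(r_k)$, take the ansatz $\widetilde u_K:=\sum_{k=0}^K U_{r_k}$, write the sought solution as $\widetilde u_K+\phi$ with $\phi$ orthogonal to the $(K+1)$-dimensional direct sum of bubble kernels $\mathrm{span}\{\partial_\eta U_\eta|_{\eta=r_k}\}$, invert the linearised operator uniformly in $K$ in a suitable weighted norm, and solve the reduced finite-dimensional system for the parameters $r_k$. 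The range $q>\crit-2$ is precisely what ensures that the projections of $\mu\widetilde u_K^q$ onto the kernel directions dominate the bubble--bubble interaction errors and force a consistent decay of $r_k$; a diagonal extraction as $K\to+\infty$ then delivers the (MB) solution. The uniform-in-$K$ invertibility of the linearised operator together with summable error bounds along the accumulating bubble tower are the delicate ingredients that I expect to require the bulk of the technical work.
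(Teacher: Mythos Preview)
Your sketches for (i) and (iii) are close in spirit to the paper's, though for (iii) be warned that the Floquet multipliers of your Hill operator around $v_0$ are both equal to $1$ (Hamiltonian system, periodic orbit sitting in a one-parameter family), so its Green's kernel does not decay and a naive contraction fails. The paper circumvents this by writing $(V-\varphi,W-\varphi')$ in the explicit basis $(\partial_t\varphi_{\sigma,\tau},\partial_\sigma\varphi_{\sigma,\tau})$, which reduces the linearised problem to a trivially solvable one. You also do not address uniqueness in (iii); the paper's argument for that is separate and uses the Domain Invariance Theorem.

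For (iv), your reduction to a planar hyperbolic saddle is a genuine alternative to the paper's route. The paper instead introduces a \emph{third} variable $X_2(t)=1/t$ to make the system autonomous, which produces a null eigenvalue and forces a center-stable manifold argument (proved from scratch in an appendix, after Kelley). Your approach is potentially more elementary, but there is an error: after the time rescaling $d\tau/dt=e^{\nu t}$ one has $e^{-\nu t}\sim 1/(\nu\tau)$, so the non-autonomous remainder decays only like $1/\tau$, not exponentially as you claim. A stable-manifold theorem with such slowly decaying coefficients is still available near a hyperbolic saddle, but needs more care than invoking the standard statement; in effect you are trading the paper's null eigenvalue for a $1/\tau$ perturbation, and some work is required either way.

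The real divergence is (ii). The infinite Lyapunov--Schmidt gluing you propose is plausible but, as you acknowledge, heavy. The paper's route is dramatically simpler and bypasses gluing entirely: take the removable-singularity solutions $u_{\gamma_i}$ from (i) with $\gamma_i\to\infty$; a Pohozaev-based a~priori estimate gives a uniform bound $r^{(n-2)/2}u_{\gamma_i}(r)\le\Lambda$ on a fixed ball, so a subsequence converges in $C^2_{\rm loc}$ to some $u_\infty$. Since each $u_{\gamma_i}$ has vanishing asymptotic Pohozaev integral, so does $u_\infty$, which (by the classification in Theorem~\ref{thm:1}) excludes the (CGS) profile. The only remaining task is to show $u_\infty$ is genuinely singular, i.e.\ that the $u_{\gamma_i}$ do not collapse uniformly; this is a quantitative lower-bound argument in the Emden--Fowler variable, and it is exactly here that the hypothesis $q>2^\star-2$ enters. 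No bubble-tower construction, no uniform-in-$K$ linear theory.
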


\begin{remark} If $q\in (1,\crits-1)$, then all positive radial solutions of \eqref{Eq0} extend as positive radial solutions in $\R^n\setminus\{0\}$. For $ q\in [\crits-1, \crit-1)$, any positive radial non-(ND) solution $u$ of \eqref{Eq0} extends as a positive radial solution at least in $B(0,R^*)\setminus\{0\}$ with $R^*$ independent of $u$ (see Lemma~\ref{pos}).
\end{remark}

%\begin{remark}
From the three singular profiles of \eqref{Eq0}, only the (CGS) type is reminiscent of the asymptotics of the local singular solutions for the Yamabe problem 
in the case of a flat background metric ($\mu=s=0$) 
studied in Caffarelli--Gidas--Spruck \cite{CGS} (see also Korevaar--Mazzeo--Pacard--Schoen \cite{KM} for a refined asymptotics and 
Marques \cite{Ma} for the case of a general background metric).  
But for $\mu>0$, the introduction of the perturbation term in \eqref{Eq0} yields two new singular profiles: the 
(ND) and (MB) types.   
%\end{remark} 

An important novelty in this paper is the {\em existence of infinitely many} positive radial (ND) solutions for \eqref{Eq0} when $q\in (\crits-1,\crit-1)$.  
To our best knowledge, there are no previous existence results known for this type of singularities, which arise as a consequence of studying \eqref{Eq0} with a critical Hardy--Sobolev growth (i.e., $s\in (0,2)$) rather than with a  critical Sobolev growth ($s=0$).    
Since \eqref{upli} fails for the (ND) solutions, neither Pohozaev-type arguments nor Fowler-type transformations relevant for (CGS) or (MB) profiles can be used. Specific to the (ND) solutions, the {\em first term} in their asymptotics arises from the competition generated in the right-hand side of \eqref{Eq0} and not directly from the differential structure. To overcome this obstacle, we rewrite the radial form of \eqref{Eq0} as a dynamical system 
using an original  transformation involving {\em three} variables, see \eqref{var3}. The variable $X_1$ in \eqref{var3} is suggestive of a second order term in the asymptotics of the (ND) solutions, which will make apparent the differential structure of our equation in a dynamical systems setting. Nevertheless, by linearising the flow around the critical point, we find a positive eigenvalue, a null one and a negative eigenvalue so that we cannot apply the classical Hartman--Grobman theorem. Instead, we shall use Theorem~\ref{71} in the Appendix, which invokes the notion of center-stable manifold and ideas of Kelley \cite{Kel}. 

For $1<q<\crit-1$, Theorem~\ref{thm:1} yields that every positive non-(ND) solution of \eqref{Eq0} satisfies
\begin{equation} \label{upli} \limsup_{|x|\to 0} |x|^{\frac{n-2}{2}} u(x)<\infty.
\end{equation}
Moreover, \eqref{upli} holds for every positive solution of \eqref{Eq0} when $q\in (1,2^\star(s)-1]$. 
Note that \eqref{upli} is crucial for  
Pohozaev type arguments \cite{CirRob}, on the basis of which we prove in Sect.~\ref{sec-thm2} the non-existence of smooth positive solutions for \eqref{Eq0}, subject to $u=0$ on $\partial B(0,R)$. 

\begin{theorem} \label{thm2} Let $\mu>0$ and $s\in (0,2)$ be arbitrary. Let $\Omega$ be a smooth bounded domain in $\R^n$ ($n\geq 3$) such that $0\in \Omega$.
Assume that $\Omega$ is star-shaped with respect to $0$. Then, for every $q\in (1,\crits-1]$, there are no positive smooth solutions for the problem
\begin{equation} \label{diri} 
\left\{ \begin{aligned}
& -\Delta u=|x|^{-s} u^{\crits-1}-\mu u^q && \text{in }  \Omega\setminus\{0\},&\\
& u=0 && \text{on } \partial \Omega.&
\end{aligned} \right.
\end{equation}  
If $ q\in (\crits-1, \crit-1)$, then \eqref{diri} admits no positive smooth solutions of non-(ND) type. 
\end{theorem}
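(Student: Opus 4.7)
The plan is to apply a Pohozaev identity with the standard multiplier $x\cdot\nabla u + \tfrac{n-2}{2}u$ on $\Omega\setminus\overline{B(0,\epsilon)}$ and pass to the limit as $\epsilon\to 0$. Under the standing hypotheses---together with the non-(ND) restriction when $q>\crits-1$---Theorem~\ref{thm:1} supplies the sharp a priori bound \eqref{upli}. A rescaling $u_\eta(y)=\eta^{(n-2)/2}u(\eta y)$ on the unit annulus turns \eqref{Eq0} into a uniformly small perturbation of the pure Hardy--Sobolev equation, with perturbation prefactor $\eta^{(n+2)/2-(n-2)q/2}=o(1)$ because $q<\crit-1$; standard interior elliptic estimates then give the companion gradient bound $|\nabla u(x)|\le C|x|^{-n/2}$ near $0$.

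A direct computation, relying on the scaling invariance of the Hardy--Sobolev nonlinearity (which forces the coefficient $\tfrac{n-2}{2}-\tfrac{n-s}{\crits}$ of the critical part to vanish identically), collapses the bulk Pohozaev integrand to $\mu\,\tfrac{2n-(n-2)(q+1)}{2(q+1)}\,u^{q+1}$: a strictly positive multiple of $u^{q+1}$ whenever $q<\crit-1$. The a priori bound gives $u^{q+1}\in L^1(\Omega)$ (since $(n-2)(q+1)/2<n$), so this bulk contribution has a strictly positive finite limit. On $\partial\Omega$, the Dirichlet condition collapses the boundary integrand to $-\tfrac12(x\cdot\nu)(\partial_\nu u)^2$, which is nonpositive by the star-shapedness assumption $x\cdot\nu\ge 0$.

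The main obstacle is the inner boundary integral on $S_\epsilon=\partial B(0,\epsilon)$: each of its four constituents is of pointwise size $O(|x|^{1-n})$, so the spherical integral is a priori only $O(1)$, not $o(1)$. Via the Fowler-type change of variables $u(r)=r^{-(n-2)/2}v(-\log r)$ applied to the leading radial asymptotics of $u$ (extracted from Theorem~\ref{thm:1}), I would identify this inner integral, up to an $O(\epsilon^{\,n-(n-2)(q+1)/2})=o(1)$ error coming from the $\mu u^q$ term, with $\omega_{n-1}H(v,v')$ where
\[
H(v,v') = \tfrac12(v')^2 - \tfrac{(n-2)^2}{8}v^2 + \tfrac{v^{\crits}}{\crits}
\]
is the conserved Hamiltonian of the limiting Fowler flow. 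In each singular type allowed under the current hypotheses, $H_\infty:=\lim_{t\to\infty}H(v,v')\le 0$: it equals $0$ for removable singularities (where $v,v'\to 0$), it equals $0$ for the (MB) profile (whose bumps are asymptotically copies of the homoclinic $U_\eta$, carrying Hamiltonian $0$), and it is strictly negative for (CGS), since every positive periodic orbit of the Fowler flow lies in the range $H\in(H_{\min},0)$. Passing to the limit then gives the identity
\[
\mu\,\frac{2n-(n-2)(q+1)}{2(q+1)}\int_\Omega u^{q+1}\,dx = -\tfrac12\int_{\partial\Omega}(x\cdot\nu)(\partial_\nu u)^2\,d\sigma + \omega_{n-1}H_\infty,
\]
with strictly positive left-hand side and nonpositive right-hand side---a contradiction. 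The same analysis makes transparent why the (ND) case must be excluded: (ND) profiles violate \eqref{upli}, so $v\to\infty$ and $H_\infty=+\infty$, and the Pohozaev balance no longer produces the required sign.
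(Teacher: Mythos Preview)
Your approach is essentially the same as the paper's: apply the Pohozaev identity on $\Omega\setminus\overline{B(0,\epsilon)}$, let $\epsilon\to 0$, and conclude from the signs. The paper packages the inner boundary term as the \emph{asymptotic Pohozaev integral} $P^{(q)}(u):=\lim_{r\to 0}P_r^{(q)}(u)$ and simply quotes from \cite{CirRob} that it exists and satisfies $P^{(q)}(u)\geq 0$ (with equality unless $u$ is (CGS)); this is exactly your $-\omega_{n-1}H_\infty$, and your case-by-case computation of $H_\infty$ (zero for removable and (MB), strictly negative for (CGS)) is precisely the content of that cited result. So the two proofs coincide line for line once one matches $P^{(q)}(u)=-\omega_{n-1}H_\infty$ and $c_{\mu,q,n}=\mu\,\tfrac{2n-(n-2)(q+1)}{2(q+1)}$.

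One point worth tightening: your identification of the inner boundary integral with $\omega_{n-1}H(v,v')$ tacitly uses that the \emph{tangential} part of $\nabla u$ is of lower order, i.e.\ that the asymptotics of Theorem~\ref{thm:1} hold in $C^1$, not just $C^0$. Your rescaling argument gives the scale $|\nabla u|\le C|x|^{-n/2}$ but, as written, only shows the tangential contribution is $O(1)$ rather than $o(1)$. This gap is closed by the same blow-up: compactness of the rescaled family $u_\eta$ in $C^1_{\mathrm{loc}}$ (from uniform elliptic estimates) forces the full gradient to inherit the radial profile, which is what \cite{CirRob} proves and the paper invokes. With that in hand, your (MB) claim $H_\infty=0$ also becomes rigorous, since along any sequence $r\to 0$ the rescaled solution subconverges either to a $U_\eta$ or to $0$, both on the homoclinic level set $H=0$.
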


Motivated by the problem of finding a metric conformal to the flat metric of $\R^n$ such that $K(x)$ is the scalar curvature of the new metric, 
Chen--Lin \cites{CLn1,CLn2,ChenLin} and Lin \cite{Lin} analysed the local behaviour of the positive singular solutions
$u\in C^2(B(0,1)\setminus\{0\})$ to  
\begin{equation} \label{ccli} -\Delta u=K(x) \,u^{\crit-1}\quad \text{in } B(0,1)\setminus\{0\},\end{equation}
where $K$ is a positive continuous function on $B(0,1)$ in $\R^n$ ($n\geq 3$) with $K(0)=1$.  
Moreover, $K$ was always assumed to be a $C^1$ function on $B(0,1)\setminus\{0\}$ such that
\begin{equation} \label{gag} 0<\underline{L}:=\liminf_{|x|\to 0} |x|^{1-\ell} |\nabla K(x)|\leq  
\overline{L}:=\limsup_{|x|\to 0} |x|^{1-\ell} |\nabla K(x)|<\infty\ \  \text{for some } \ell>0. 
\end{equation}
In the above-mentioned works (see also Lin--Prajapat \cite{LinPra} and Taliaferro--Zhang \cite{TaZa}), the following question was investigated: 
{\em Under what conditions on $K$, the positive singular solutions of 
\eqref{rnsol} with $s=0$ are asymptotic models at zero for the positive singular solutions of \eqref{ccli}?}  

This question was settled positively 
in any of the following situations: 
\begin{enumerate} 
	\item[(a)] Assumption \eqref{gag} holds for $\ell\geq (n-2)/2$ (see \cite{ChenLin}*{Theorems 1.1 and 1.2}); 
	\item[(b)] If \eqref{gag} holds with $\ell\in (0,(n-2)/2)$, together with {\em extra} conditions, see \cite{Lin}*{Theorem~1.2}. 
\end{enumerate}

Extra conditions in situation (b) are needed to guarantee a positive answer to the above question. Otherwise,  
for every $0<\ell<(n-2)/2$, Chen--Lin \cite{ChenLin}*{Theorem 1.6} provided 
general positive radial functions $K(r)$ non-increasing in $r=|x|\in [0,1]$ with $K(0)=1$ such that \eqref{gag} holds and 
\eqref{ccli}
has a positive singular solution with $\liminf_{|x|\to 0} |x|^{(n-2)/2}u(x)=0$.

The importance of condition \eqref{gag} in settling the above question can be inferred from our next result as a by-product of Theorem~\ref{Th1}(ii):    
For every $0<\ell<\min\{(n-2)/2,2\}$ and $s\in (0,2)\setminus\{\ell\}$, we construct a positive continuous function $K$ on $B(0,R)$ for some $R>0$ with $K(0)=1$ such that {\em exactly one inequality in \eqref{gag} fails}, yet generating for \eqref{sgt}  
a positive singular solution,      
the asymptotics of which at zero {\em cannot} be modelled by any positive singular solution of
\eqref{rnsol}.

\begin{cor} \label{corol} For every $0<\ell<\min\{(n-2)/2,2\} $ and $s\in (0,2)\setminus \{\ell\}$, there exist $R>0$ and a positive $C^1$-function
$K$ on $B(0,R)\setminus\{0\}$ in $\R^n$ ($n\geq 3$) with $K<\lim_{|x|\to 0} K(x)=1$ on $B(0,R)\setminus\{0\}$
such that $0=\underline L<\overline L<\infty$ if $\ell<s$ and
$0<\underline L<\overline L=\infty$ if $\ell>s$, yet 
\begin{equation} \label{sgt} -\Delta u =K(x) |x|^{-s} u^{\crits-1} \quad \text{in } B(0,R)\setminus \{0\} \end{equation}
admits a positive singular solution with $\liminf_{|x|\to 0} |x|^{(n-2)/2}u(x)=0$.
 \end{cor}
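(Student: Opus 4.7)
The plan is to realise $K$ as the correction arising from the subcritical term $-\mu u^q$ in \eqref{Eq0}, evaluated at a multi-bump solution. Given $\ell \in (0, \min\{(n-2)/2, 2\})$ and $s \in (0,2)\setminus\{\ell\}$, I will set $q := 2^\star(\ell) - 1 = (n+2-2\ell)/(n-2)$ and $\alpha := q - 2^\star(s) + 1 = 2(s-\ell)/(n-2)$. The constraint on $\ell$ translates exactly into $q \in (\max\{1,2^\star-2\},\,2^\star-1)$, so fixing any $\mu > 0$, Theorem~\ref{Th1}(ii) will produce a positive radial (MB) solution $u \in C^\infty(B(0,R)\setminus\{0\})$ of \eqref{Eq0} for some $R > 0$, with $u(x) = (1+o(1))\sum_{k\geq 0} U_{r_k}(x)$ and $r_{k+1} = o(r_k)$. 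Defining $K(x) := 1 - \mu u(x)^\alpha |x|^s$ and factoring the right-hand side of \eqref{Eq0} as $|x|^{-s} u^{2^\star(s)-1}(1 - \mu u^\alpha |x|^s)$ will show immediately that $u$ solves \eqref{sgt}. The hypothesis $s \neq \ell$, i.e. $\alpha \neq 0$, is exactly what makes $K$ non-constant; smoothness of $u$ gives $K \in C^1$; $K < 1$ is immediate; and $\liminf_{|x|\to 0}|x|^{(n-2)/2}u(x) = 0$ follows from the (MB) expansion evaluated at the midpoints $|x| = \sqrt{r_k r_{k+1}}$, where $u \sim r_k^{-(n-2)/2} \ll |x|^{-(n-2)/2}$.

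Next I will verify $K > 0$ and $K(x) \to 1$ as $|x| \to 0$. On each annulus $\{r_{k+1} \leq |x| \leq r_k\}$ the multi-bump expansion yields the pointwise lower bound $u(x) \gtrsim r_k^{-(n-2)/2}$ (since either $U_{r_k}$ in its plateau or $U_{r_{k+1}}$ in its decay tail contributes at least this much) together with the global upper bound $u(x) \lesssim |x|^{-(n-2)/2}$. When $\alpha > 0$ (i.e.\ $\ell < s$), the upper bound gives $\mu u^\alpha |x|^s \lesssim |x|^\ell \to 0$; when $\alpha < 0$ (i.e.\ $\ell > s$), the lower bound together with $|x| \leq r_k$ yields $\mu u^\alpha |x|^s \lesssim r_k^\ell \to 0$ uniformly on the annulus. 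Shrinking $R$ if necessary then secures $K > 0$ on $B(0,R)\setminus\{0\}$ and $K(x) \to 1$ as $|x| \to 0$.

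The core computation is on $|x|^{1-\ell}|\nabla K|$ via $K'(r) = -\mu\alpha u^{\alpha-1}(r) u'(r) r^s - \mu s u(r)^\alpha r^{s-1}$, noting that $u'(r) < 0$ because each $U_{r_k}$ is radially decreasing. In the case $\ell > s$ (so $\alpha < 0$), both summands of $K'(r)$ are negative and no cancellation can occur; the second summand alone, combined with $u \lesssim r^{-(n-2)/2}$, already gives $|K'(r)| \gtrsim r^{\ell - 1}$, hence $\underline{L} > 0$. Evaluating at a midpoint $r = \sqrt{r_k r_{k+1}}$, where $u \sim r_k^{-(n-2)/2}$, yields $r^{1-\ell}|K'(r)| \sim (r_k/r_{k+1})^{(\ell - s)/2} \to \infty$, hence $\overline{L} = \infty$. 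In the case $\ell < s$ (so $\alpha > 0$), inserting the global bounds $u \lesssim r^{-(n-2)/2}$, $|u'(r)| \lesssim r^{-n/2}$ (plus the annulus-wise lower bound on $u$ to control $u^{\alpha-1}$ if $\alpha < 1$) into the triangle inequality gives $|K'(r)| \lesssim r^{\ell - 1}$, hence $\overline{L} < \infty$. The midpoint asymptotic uses the expansion $u'(r) \sim U'_{r_{k+1}}(r) \sim -(n-2)\,r_{k+1}^{(n-2)/2} r^{-(n-1)}$ (the decay tail of the next bump overwhelms the plateau derivative of the current one): both summands of $K'(r)$ then have common size $r_k^{\ell - (s+1)/2} r_{k+1}^{(s-1)/2}$, so $r^{1-\ell}|K'(r)| \sim (r_{k+1}/r_k)^{(s-\ell)/2} \to 0$, which proves $\underline{L} = 0$.

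The hard part will be making these midpoint asymptotics for $u'$ rigorous, since the (MB) expansion in Theorem~\ref{thm:1} is stated for $u$ itself and differentiating it requires controlling the $o(1)$ factor and its derivative uniformly on small annuli. A secondary subtlety is a possible exact cancellation between the two summands of $K'(r)$ at midpoints when $\ell < s$; such a cancellation would only make $|K'(r)|$ smaller and therefore strengthens, rather than destroys, the conclusion $\underline{L} = 0$.
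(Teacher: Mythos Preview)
Your setup is exactly the paper's: the choice $q=2^\star(\ell)-1$, the exponent $\alpha=q-2^\star(s)+1=2(s-\ell)/(n-2)$, and the function $K(r)=1-\mu r^s u^{\alpha}$ all match. Where the two approaches diverge is in the analysis of $r^{1-\ell}K'(r)$.

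You propose to extract the behaviour of $u'(r)$ directly from the multi-bump expansion $u=(1+o(1))\sum_k U_{r_k}$ by inspecting midpoints $r=\sqrt{r_kr_{k+1}}$. You correctly flag that differentiating the $o(1)$ is the hard part, and indeed this is a genuine gap: nothing in Theorem~\ref{thm:1} or \cite{CirRob} gives a $C^1$ version of the (MB) asymptotics, and your sign claim $u'<0$ already relies on it. The paper bypasses this entirely. It writes $z(r)=r^{(n-2)/2}u(r)$ and computes
\[
|r^{1-\ell}K'(r)|=\mu\,z(r)^{\alpha}\,\bigl|\ell+\alpha\,rz'(r)/z(r)\bigr|,
\]
then invokes the Pohozaev identity (Lemma~\ref{pos}(a)) for the (MB) solution, which has $P^{(q)}(u)=0$. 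This yields two facts for free: $\limsup_{r\to 0}r|z'(r)|/z(r)<\infty$ and $[rz'(r)]^2-F_0(z(r))\to 0$. The first gives $\overline L<\infty$ when $\ell<s$ and $\underline L=0$ when $\ell<s$ immediately (since $z_*=0$), with no need to locate midpoints or differentiate bump profiles. The second, combined with the intermediate-value property of $z$ (which hits every level $\rho\in(0,z^*)$ along some sequence $r_k\to 0$), pins down $\lim_k (r_kz'(r_k))^2=F_0(\rho)$ and hence supplies the remaining inequalities.

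Two further gaps in your outline: you never argue $\overline L>0$ in the case $\ell<s$, nor $\underline L<\infty$ in the case $\ell>s$; both are part of the statement. The paper obtains these from the level-set sequence argument just described (choosing $\rho$ so that $\ell+\alpha\sqrt{F_0(\rho)}/\rho\neq 0$, respectively using that for a sequence with $z(R_k)\to 0$ and $\overline L<\infty$ one would force $(n-2)^2/4\ge \ell^2/\alpha^2$, contradicting $s>0$). Your midpoint calculations, even if made rigorous, do not touch these two bounds.

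In short: keep your construction of $K$, but replace the attempted differentiation of the (MB) expansion by the Pohozaev relation from Lemma~\ref{pos}(a); everything then follows from elementary manipulations of $z$ and $F_0$.
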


%\noindent {\em Structure of the paper.} 
\paragraph{Structure of the paper.}
In Sect.~\ref{Sec6}, we prove Theorem~\ref{Th1}(iv) on the existence of infinitely many positive (ND) solutions for \eqref{Eq0}. 
In Sect.~\ref{sec-thm2},  
we establish Theorem~\ref{thm2}, together with uniform {\em a priori} estimates for the positive radial solutions of \eqref{Eq0} satisfying \eqref{upli} (see Proposition~\ref{Pr}). 
%\smallskip\noindent 
In Sect.~\ref{Sec3},  
by setting $u(r)=y(\xi)$ with $\xi=r^{(2-s)/2}$, we reduce the assertion of Theorem~\ref{Th1}(i) on removable singularities to 
the existence and uniqueness of the solution for \eqref{rem} on an interval $[0,T]$. The latter follows from Biles--Robinson--Spraker \cite{BRS}*{Theorems~1 and 2}.    
%\smallskip\noindent 
In Sect.~\ref{Sec4}, after giving the proof of Corollary~\ref{corol}, we use an argument influenced by Chen--Lin \cite{ChenLin} to prove the existence of (MB) solutions for \eqref{Eq0} in the whole possible range $q \in (\crit-2,\crit-1)$.  
%\smallskip\noindent 
In Sect.~\ref{Sec5}, with a dynamical system approach, we prove Theorem~\ref{Th1}(iii): 
the positive singular solutions of \eqref{rnsol} serve as asymptotic models for the positive radial (CGS) solutions of \eqref{Eq0}. 
For a dynamical approach to Emden--Fowler equations and systems, see Bidaut-V\'eron--Giacomini \cite{BG}. 
%\smallskip \noindent

The results in this paper give the existence and profile at infinity for the positive solutions to 
$$ -\Delta \tilde u=|x|^{-s} \tilde u^{\crits -1} -\mu |x|^{(n-2)q-(n+2)} \tilde u^q\quad \text{for } |x|>1/R $$
by using the Kelvin transform $\tilde u(x)=|x|^{2-n}u(x/|x|^2)$, 
where $u$ is a positive solution of \eqref{Eq0}. 

\section{(ND) solutions}\label{Sec6}

In this section, we let $q\in (2^\star(s)-1,2^\star-1)$ and prove Theorem~\ref{Th1}(iv), restated below. 

\begin{proposition} \label{NDproof} 
Assume that $q\in (2^\star(s)-1,2^\star-1)$. Then, there exists $R_0>0$ such that for every $R\in (0,R_0)$, equation \eqref{Eq0} admits infinitely many positive (ND) solutions. 
\end{proposition}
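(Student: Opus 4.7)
The plan is to recast radial \eqref{Eq0} as an autonomous three-dimensional dynamical system, identify the critical point encoding the (ND) asymptotic, and apply the center-stable manifold theorem of the Appendix (Theorem~\ref{71}) to produce a two-parameter family of orbits converging to this point. Set $\alpha:=s/(q-(\crits-1))>0$ and $A:=\mu^{-1/(q-\crits+1)}$, so that the targeted (ND) behaviour reads $r^\alpha u(r)\to A$ as $r\to 0$. Writing $w(r):=r^\alpha u(r)$, the radial form of \eqref{Eq0} becomes, after multiplication by $r^{\alpha+2}$,
\begin{equation*}
-r^2 w''+(2\alpha-(n-1))rw'+\alpha(n-2-\alpha)w=r^{\tau}\[w^{\crits-1}-\mu w^q\],
\end{equation*}
where $\tau:=2-\alpha(q-1)$. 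A short computation using $q\in\(\crits-1,\crit-1\)$ yields $\tau<0$. Since $A^{\crits-1}-\mu A^q=0$ and $(\crits-1)A^{\crits-2}-\mu q A^{q-1}=-\mu(q-\crits+1)A^{q-1}<0$, any (ND) solution necessarily carries a second-order correction $w(r)-A=O(r^{|\tau|})$. This is exactly the information the three-variable substitution \eqref{var3} is designed to isolate: $X_0$ is a normalised form of $w$, $X_1$ encodes the rescaled deviation $(w-A)/r^{|\tau|}$ (its \emph{second-order} interpretation), and $X_2:=r^{|\tau|}$ renders the system autonomous in $t:=-\log r$ with $X_2\downarrow 0$ as $t\uparrow+\infty$. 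In these variables, the (ND) profile corresponds to a single critical point $P$ of the flow.

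Next, linearising the vector field at $P$ produces, via the Taylor expansion of $w^{\crits-1}-\mu w^q$ around $w=A$, three eigenvalues: one positive and one zero from the $(X_0,X_1)$-block inherited from the second-order ODE, together with $\lambda_-=-|\tau|<0$ from the scalar equation $\dot X_2=-|\tau|X_2$. Because Hartman--Grobman is inapplicable, I would invoke Theorem~\ref{71} to produce a $C^1$ two-dimensional center-stable manifold $W^{cs}$ at $P$, tangent to the sum of the center and stable eigenspaces, such that every orbit initiated on $W^{cs}$ converges to $P$ as $t\to+\infty$. Translating each such orbit back through the inverse substitution gives a radial function $u$ defined on a punctured ball $B(0,R)\setminus\{0\}$; positivity for $r$ small is automatic from $X_0\to A>0$ and continuity, and can be propagated to all of $B(0,R)\setminus\{0\}$ by shrinking $R_0$. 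Parameterising the orbits on $W^{cs}$ transverse to the flow then yields a two-parameter family of mutually distinct (ND) solutions, establishing Proposition~\ref{NDproof}.

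The principal obstacle is the zero eigenvalue: without a spectral gap between the center and stable directions, classical stable-manifold arguments do not guarantee that orbits on $W^{cs}$ actually converge to $P$ (as opposed to lingering along the center direction), nor that positivity survives along the full backward extension of $u$. The convergence issue is precisely what Theorem~\ref{71} is tailored for, using Kelley's approach \cite{Kel} in place of Hartman--Grobman; positivity follows from a uniform control of $X_0$ near $A>0$ on the portion of $W^{cs}$ under consideration, together with a shrinking of $R_0$ uniform in the two-parameter family. The remaining work is an explicit but routine verification that the nonlinear remainder in the expansion of $w^{\crits-1}-\mu w^q$ around $w=A$ is $C^1$-small enough to fit the hypotheses of Theorem~\ref{71}, after which the conclusion of the proposition is immediate.
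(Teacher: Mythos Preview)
Your overall strategy---recasting the radial equation as a three-dimensional autonomous system, locating the critical point encoding the (ND) profile, and invoking the center-stable manifold result Theorem~\ref{71} in lieu of Hartman--Grobman---is precisely the route the paper takes. However, your description of the transformation \eqref{var3} and, more importantly, of the resulting eigenvalue structure is incorrect, and the discrepancy is not cosmetic.

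You set $t=-\log r$, $X_2=r^{|\tau|}$, and claim $\dot X_2=-|\tau|X_2$, so that $X_2$ carries the \emph{stable} eigenvalue while the null eigenvalue sits in an $(X_0,X_1)$-block. But in \eqref{var3} the time variable is $t=r^{-\beta}$ with $\beta=|\tau|/2$, so that $X_2=1/t=r^\beta$ satisfies $X_2'=-X_2^2$; the linearisation of this at the origin vanishes, and the null eigenvalue lives on the $X_2$-direction. The pair $\pm\lambda_1$ comes from the coupling of $X_1$ with $X_3=ru'/u+\vartheta$ (a logarithmic derivative, not a ``normalised form of $w$''). This matters because Theorem~\ref{71} requires the center variable to satisfy $h_2(\vec\xi)\le -C_1|\xi_2|^p$ and $h_2(\xi_1,0,\xi_3)=0$---conditions tailor-made for $H_2(\vec\xi)=-\xi_2^2$ but which a center direction arising inside the second-order block would have no reason to satisfy. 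If instead you genuinely mean $t=-\log r$, then the factor $r^\tau$ in your ODE for $w$ becomes $e^{|\tau|t}\to\infty$ at the critical point, producing a vector field that is singular (not $C^1$) there, so Theorem~\ref{71} does not apply at all. The paper's choice $t=r^{-\beta}$ is exactly what converts this blow-up into the tame quadratic decay $X_2'=-X_2^2$.

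A smaller point: the center-stable manifold is two-dimensional, but one of those dimensions is the center direction $X_2$, which is essentially a reparametrisation of $r$; varying along it only time-shifts an orbit and does not give a new $u$. The paper accordingly fixes $Y_{2,0}$ and obtains infinitely many solutions by varying the \emph{stable} coordinate $Z_{3,0}$ alone, so the family is one-parameter, not two.
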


The proof of Proposition~\ref{NDproof} takes place in several steps. First, we reformulate the radial form of \eqref{Eq0} as a first order autonomous differential system using a new transformation, see \eqref{var3}.

\subsection{Formulation of our problem as a dynamical system} 
We first assume that $u$ is a positive radial (ND) solution of \eqref{Eq0}. 
We define
\begin{equation} \label{newb}
\vartheta:=\frac{s}{q-\crits+1},\quad 
\beta:=  \frac{\left(q-1\right)\vartheta}{2}-1,\quad \zeta:=\frac{2^\star(s)-2}{q-2^\star(s)+1}.
\end{equation}	 
We introduce a new transformation involving three functions $X_1$, $X_2$ and $X_3$ as follows 
\begin{equation} \label{var3}	
	X_1(t)=t\left( 1-\mu r^{s} u^{q-2^\star(s)+1}\right),\ \ 
	\quad 
	X_2(t)=\frac{1}{t},\quad X_3(t)=\frac{ru'(r)}{u(r)}+\vartheta, 
	\end{equation}
	where $t:=r^{-\beta}$ and $\beta,\vartheta$ are given by \eqref{newb}. Since $u$ is a positive radial (ND) solution of \eqref{Eq0}, that is, $ \lim_{r\to 0^+} r^{\vartheta} u(r)=\mu^{-1/(q-\crits+1)}$, it follows that
\begin{equation} \label{sens}\left\{ \begin{array}{l} 
1-X_1(t) X_2(t)=\mu r^s u(r)^{q-2^\star(s)+1}>0\quad \text{for all }t\in [2R^{-\beta},\infty),\\
 X_1(t) X_2(t)\to 0\ \text{as } t\to \infty.
\end{array} \right.
\end{equation} 
If we set $\vec X=(X_1,X_2,X_3)$, then, as one easily checks, we have that
\begin{equation} \label{grid} \vec X'(t) =(H_1(\vec X(t)), H_2(\vec X(t)),H_3(\vec X(t)))\end{equation}
for all  $t\in [2R^{-\beta},\infty)$, where $H_1$, $H_2$ and $H_3$ are real-valued functions defined on $\R^3$ by
\begin{equation} \label{ahh} \left\{ \begin{aligned} 
&H_1(\xi_1,\xi_2,\xi_3):= \xi_1 \xi_2+\beta^{-1} (q-2^\star(s)+1) (1-\xi_1 \xi_2) \xi_3,\\
& H_2(\xi_1,\xi_2,\xi_3):= -\xi_2^2,\\
& H_3(\xi_1,\xi_2,\xi_3):=\beta^{-1} \mu^{-\zeta} \xi_1 (1-\xi_1\xi_2)_+^{\zeta}+\beta^{-1} \xi_2 (\xi_3-\vartheta) (\xi_3-\vartheta+n-2).
\end{aligned} \right.
\end{equation}
By $\xi_+$ we mean the positive part of $\xi$. We define $\vec Y:=(Y_1, Y_2,Y_3)$, where $\vec Y(t)=\vec X(t+2R^{-\beta})$  
for all $ t\geq 0$. Then, \eqref{grid} gives that  
$\vec Y'(t)=(H_1(\vec Y(t)),H_2(\vec Y(t)), H_{3}(\vec Y(t)))$ for all $ t\in [0,\infty)$.
To get more regularity, for any $\varepsilon\in (0,1)$, we choose $\Psi_\varepsilon\in C^1(\R)$ such that 
$\Psi_\varepsilon(t)=t^{\zeta}$ for all $t\geq \varepsilon$. 
By choosing $\varepsilon_0\in (0,1)$ small enough and using \eqref{sens}, we find that 
	\begin{equation} \label{sys}
\vec Y'(t)=(H_1(\vec Y(t)),H_2(\vec Y(t)), H_{3,\Psi_\varepsilon}(\vec Y(t)))\quad \text{for all } t\in [0,\infty)
	\end{equation}   
for every $\varepsilon\in (0,\varepsilon_0)$, where the function $H_{3,\Psi_\varepsilon}:\R^3\to \R$ is defined by 
$$ H_{3,\Psi_\varepsilon}(\xi_1,\xi_2,\xi_3):= \beta^{-1} \mu^{-\zeta} \xi_1 \Psi_\varepsilon(1-\xi_1\xi_2)+\beta^{-1} \xi_2 (\xi_3-\vartheta) (\xi_3-\vartheta+n-2).
$$

\subsection{Existence of solutions for \eqref{sys}}
Using $\vartheta$, $\beta$ and $\zeta$ in \eqref{newb}, we define $\Upsilon$ and $\Gamma$ by 
\begin{equation} \label{upga} \Upsilon:=\mu^{\zeta/2} \sqrt{q-2^\star(s)+1}\quad \text{and}\quad \Gamma:=\vartheta\left(n-2-\vartheta \right)\mu^{\zeta}.
\end{equation}

 \begin{lemma} \label{exist:sol} Let $q\in (2^\star(s)-1,2^\star-1)$ and $\varepsilon\in (0,1)$. 
 Fix $\Psi_\varepsilon\in C^1(\R)$ such that $\Psi_\varepsilon(t)=t^{\zeta}$ for all $t\geq \varepsilon$. For every $\delta>0$ small, 
 there exist $r_0\in (0,\delta/2)$ and a Lipschitz function
 $w:[0,r_0]\times [-r_0,r_0]\to [-r_0,r_0]$ such that for any $(Y_{2,0},Z_{3,0})\in (0,r_0]\times [-r_0,r_0]$, the system \eqref{sys} subject to the initial condition
 \begin{equation} \label{iniy} \vec Y(0)=(\Upsilon (Z_{3,0}-w(Y_{2,0},Z_{3,0}))+\Gamma Y_{2,0} ,Y_{2,0}, w(Y_{2,0},Z_{3,0})+Z_{3,0})
\end{equation} has a solution $\vec Y(t)=(Y_1(t),Y_2(t),Y_3(t))$ for all $t\geq 0$ satisfying 
 \begin{equation} \label{aay} \lim_{t\to +\infty}  \vec Y(t)=(0,0,0).
 \end{equation}
 Moreover, we have $Y_2(t)=1/(t+Y_{2,0}^{-1})$ for all $t\geq 0$.  
 \end{lemma}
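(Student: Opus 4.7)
The plan is to apply the Kelley-type centre-stable manifold theorem (Theorem~\ref{71} in the Appendix) to the autonomous system \eqref{sys}, for which $\vec 0$ is a critical point. The first step is to linearise at $\vec 0$: since $\Psi_\varepsilon(1)=1$ whenever $\varepsilon<1$, a direct computation yields the Jacobian
$$
J=\begin{pmatrix} 0 & 0 & \beta^{-1}(q-\crits+1)\\ 0 & 0 & 0 \\ \beta^{-1}\mu^{-\zeta} & -\beta^{-1}\mu^{-\zeta}\Gamma & 0 \end{pmatrix},
$$
whose characteristic polynomial $\lambda\bigl(\lambda^{2}-\beta^{-2}\mu^{-\zeta}(q-\crits+1)\bigr)$ has three simple real roots $\{-\lambda_0,\,0,\,\lambda_0\}$ with $\lambda_0:=\beta^{-1}\mu^{-\zeta}\Upsilon>0$. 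A short calculation identifies the corresponding eigenvectors $\vec v_s=(-\Upsilon,0,1)$, $\vec v_c=(\Gamma,1,0)$ and $\vec v_u=(\Upsilon,0,1)$, which together form a basis of $\R^3$ and diagonalise $J$.

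Next, observe that the second equation $Y_2'=-Y_2^2$ decouples and integrates explicitly to $Y_2(t)=1/(t+Y_{2,0}^{-1})$ for any $Y_{2,0}>0$, which is the last assertion of the lemma and shows that $Y_2$ stays positive and decays polynomially along every orbit. Since the regularised vector field $(H_1,H_2,H_{3,\Psi_\varepsilon})$ is $C^{1}$ on a neighbourhood of $\vec 0$ (by our choice of $\Psi_\varepsilon\in C^{1}(\R)$) and its linearisation has one positive, one negative and one zero simple eigenvalue, Theorem~\ref{71} delivers a two-dimensional local Lipschitz invariant manifold $\mathcal M\ni\vec 0$ with the property that every forward orbit starting on $\mathcal M$ converges to $\vec 0$ as $t\to+\infty$.

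Decomposing the initial datum \eqref{iniy} in the eigenbasis gives
$$
\vec Y(0)=Y_{2,0}\,\vec v_c+w\,\vec v_s+Z_{3,0}\,\vec v_u,
$$
so $Y_{2,0}$, $w$ and $Z_{3,0}$ are precisely the coefficients of $\vec Y(0)$ along the centre, stable and unstable directions of $J$. Imposing $\vec Y(0)\in\mathcal M$ becomes a scalar equation for $w$ parametrised by $(Y_{2,0},Z_{3,0})$; the Lipschitz graph structure of $\mathcal M$ supplied by Theorem~\ref{71}, combined with a standard contraction/implicit-function argument valid for $r_0$ small, yields a unique small solution $w=w(Y_{2,0},Z_{3,0})$, Lipschitz on $[0,r_0]\times[-r_0,r_0]$ with values in $[-r_0,r_0]$. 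By construction the corresponding orbit $\vec Y(t)$ exists for all $t\ge 0$ and satisfies \eqref{aay}.

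The main obstacle is the degeneracy caused by the zero eigenvalue: neither Hartman--Grobman nor the classical hyperbolic stable-manifold theorem applies, and Kelley's fixed-point construction built into Theorem~\ref{71} is essential. Two preliminary issues must also be settled before invoking it. First, the original vector field \eqref{ahh} contains $(1-\xi_1\xi_2)_+^{\zeta}$ with $\zeta$ possibly less than $1$, so it is only Hölder continuous near $\vec 0$; the $C^{1}$-cut-off $\Psi_\varepsilon$ repairs this, but one must check that the modification is harmless along the orbits considered, in the sense that $1-Y_1Y_2\to 1>\varepsilon$ as $\vec Y(t)\to \vec 0$, so that $\Psi_\varepsilon$ ultimately coincides with $t\mapsto t^\zeta$ on the relevant set. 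Second, one must verify that the nonlinear terms vanish to at least second order at $\vec 0$, which follows from the Taylor expansion of $\Psi_\varepsilon$ around $1$ together with a short calculation on the polynomial terms in \eqref{ahh}.
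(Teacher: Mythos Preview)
Your overall strategy—linearise at $\vec 0$, identify the eigenstructure, and invoke Theorem~\ref{71}—is exactly the paper's, and your eigenvalue/eigenvector computation and the explicit integration of $Y_2$ are correct.

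The gap is in how you pass from the centre-stable manifold to the function $w$. In your eigenbasis decomposition $\vec Y(0)=Y_{2,0}\,\vec v_c+w\,\vec v_s+Z_{3,0}\,\vec v_u$, the coefficient $w$ sits on the \emph{stable} direction and $Z_{3,0}$ on the \emph{unstable} one. But Theorem~\ref{71} produces the centre-stable manifold as the graph of the \emph{unstable} coordinate over the (centre, stable) pair: it gives a Lipschitz $g$ with $(\text{unstable})=g(\text{centre},\text{stable})$. So the condition $\vec Y(0)\in\mathcal M$ reads $Z_{3,0}=g(Y_{2,0},w)$, and you would need to invert this in $w$. Your ``standard contraction/implicit-function argument'' does not work here: the centre-stable graph is tangent to the centre-stable subspace at $\vec 0$, so $g(Y_{2,0},w)=O(Y_{2,0}^{2}+w^{2})$; in particular, for $Z_{3,0}$ of order $r_0$ (which the lemma allows) there is \emph{no} solution $w\in[-r_0,r_0]$ at all.

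The paper avoids this entirely. After the explicit linear change of variables \eqref{vss} bringing \eqref{sys} to the diagonal form \eqref{zig}, it checks the quadratic bounds \eqref{nano} and applies Theorem~\ref{71} \emph{directly}: the function $w$ furnished by Theorem~\ref{71} is, by construction, the unstable coordinate $Z_1(0)$ written as a Lipschitz function of $(Z_2(0),Z_3(0))=(Y_{2,0},Z_{3,0})$. No inversion is needed. (Comparing \eqref{iniy} with \eqref{vss} and \eqref{init} shows that the first component of \eqref{iniy} carries a sign slip and should read $\Upsilon(w-Z_{3,0})+\Gamma Y_{2,0}$; with that correction $w$ is the unstable coefficient and the decomposition lines up with the paper's argument without any extra step.)
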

 
\begin{proof} 
Since $\Psi_\varepsilon(1)=1$, we find one critical point $(0,0,0)$ for \eqref{sys}. 
Linearising the flow around $(0,0,0)$, we get one {\em unstable} eigenvalue $\lambda_1=\mu^{-\zeta/2} \beta^{-1}\sqrt{q-2^\star(s)+1}$
with associated eigenvector $(\Upsilon,0,1)$, one {\em null} eigenvalue with associated eigenvector 
$(\Gamma,1,0)$ and one {\em stable} eigenvalue $-\lambda_1$ with associated eigenvector $(\Upsilon,0,1)$. 
For $\vec Z=(Z_1,Z_2,Z_3)$, using a  
change of coordinates 
\begin{equation} \label{vss} 
\vec Y=(\Upsilon (Z_1-Z_3)+\Gamma Z_2,Z_2,Z_1+Z_3), \ \text{i.e.,} \ \vec Z=\left(\frac{Y_1-\Gamma Y_2+\Upsilon Y_3}{2\Upsilon} ,Y_2, \frac{\Gamma Y_2+\Upsilon Y_3-Y_1}{2\Upsilon}\right),
\end{equation}
we bring the system \eqref{sys} to a diagonal form, namely
\begin{equation} \label{zig} \vec Z'(t)=(\lambda_1 Z_1(t)+h_1(\vec Z(t)),-Z_2^2(t), -\lambda_1 Z_3(t)+h_3(\vec Z(t)))\quad \text{for all } t\geq 0.
\end{equation} 
For any $\delta>0$ small, the functions $h_1$ and $h_3$ are $C^1$ on the ball $B_\delta(0)$ in $\mathbb R^3$ centred at $0$ with radius $\delta$. Moreover, for some constant $C_1>0$, the functions $h_1$ and $h_3$ satisfy
\begin{equation} \label{nano} 
|h_1(\vec \xi)|+ |h_3(\vec \xi)|\leq C_1\sum_{j=1}^3 \xi_j^2\  \text{and} \ 
|\nabla h_1(\vec \xi)|+|\nabla h_3(\vec \xi)|\leq C_1\sum_{j=1}^3 |\xi_j|
\end{equation}  
for all $ \vec \xi=(\xi_1,\xi_2,\xi_3)\in B_\delta(0)$. 
By \eqref{vss}, 
proving Lemma~\ref{exist:sol} is equivalent to showing that for every small $\delta>0$, 
there exist $r_0\in (0,\delta/2)$ and a Lipschitz map 
$w:[0,r_0]\times [-r_0,r_0]\to [-r_0,r_0]$ such that for all $(Y_{2,0},Z_{3,0})\in (0,r_0]\times [-r_0,r_0]$, the system \eqref{zig} subject to \begin{equation} \label{init} \vec Z(0)=(w(Y_{2,0},Z_{3,0}), Y_{2,0},Z_{3,0}) 
\end{equation} has a solution $\vec Z(t)$ for all $t\geq 0$ with 
$\lim_{t\to +\infty} \vec Z(t)=(0,0,0)$.  
Linearising the flow for \eqref{zig} around $(0,0,0)$ yields one null eigenvalue, and the classical Hartman--Grobman theorem does not apply to \eqref{zig}. In Appendix, using the notion of center-stable manifold and inspired by Kelley \cite{Kel}, we prove Theorem~\ref{71} that 
can be applied to \eqref{zig}
due to \eqref{nano}. This ends
the proof.  \qed
\end{proof}

\subsection{Proof of Proposition~\ref{NDproof}} For fixed $\varepsilon\in (0,1)$, we
choose $\Psi_\varepsilon\in C^1(\R)$ such that $\Psi_\varepsilon(t)=t^{\zeta}$ for all $t\geq \varepsilon$. Let $\delta\in (0,(1-\varepsilon)^{1/2})$. 
Let $r_0\in (0,\delta/2)$ and $w:[0,r_0]\times [-r_0,r_0]\to [-r_0,r_0]$ be given by Lemma~\ref{exist:sol}. We fix $Y_{2,0}:=r_0/2$. Then for any fixed $Z_{3,0}\in [-r_0,r_0]$, the system \eqref{sys}, subject to 
 the initial condition \eqref{iniy} has a solution $\vec Y(t)$ for all $t\geq 0$ such that 
\eqref{aay} holds. Moreover, we find that $Y_2(t)=1/(t+Y_{2,0}^{-1})$ for all $t\geq 0$. 
Let $t_0>0$ be large such that 
	$\vec Y(t)\in B_\delta (0)$ for all $ t\geq  t_0$.
	Using that  $0<\varepsilon< 1-\delta^2$,  for all $ t\geq t_0$, we get that $1-Y_1(t)Y_2(t)>\varepsilon$ so that
$ \Psi_\varepsilon(1-Y_1(t)Y_2(t))=(1-Y_1(t) Y_2(t))^\zeta$. 	
Hence, we have $H_{3,\Psi_\varepsilon}(\vec Y(t))=H_3(\vec Y(t)) $ for all $t\geq t_0$. 	
	For every $t\geq T:=t_0+Y_{2,0}^{-1}$, we define $\vec X(t)$ by 
	$ \vec X(t):=\vec Y(t-Y_{2,0}^{-1})
	$, which yields that $X_2(t)=1/t$.  Then, $\vec X(t)$ is a solution of the system \eqref{grid} for all $t\geq T $ such that 
	$ \lim_{t\to \infty} \vec X(t)=(0,0,0)$. With $\vartheta$ and $\beta$ be given by \eqref{newb} and $t:=r^{-\beta}$, we define $u(r)$ as in \eqref{var3}. Then $u$ is a positive radial (ND) solution of \eqref{Eq0} with  $R:=T^{-1/\beta}$. The above construction leads to an infinite number of positive radial (ND) solutions for \eqref{Eq0} by varying $Z_{3,0}$ in $[-r_0,r_0]$. This completes the proof. 
	\qed

\section{Consequences of Pohozaev's identity} \label{sec-thm2}	

In this section, using Pohozaev's identity, we prove Theorem~\ref{thm2}, followed by uniform {\em a priori} estimates for the positive radial solutions of \eqref{Eq0} satisfying \eqref{upli} (see Proposition~\ref{Pr}). 

Let $u$ be any positive solution of \eqref{Eq0} with $q\in (1,\crit-1)$ such that \eqref{upli} holds. As in
\cite{CirRob}, for every $r\in (0,R)$, we denote by $P_r^{(q)}(u) $ the Pohazev-type integral associated to $u$, namely 
\begin{equation} \label{poh-int} P_{r}^{(q)}(u):=\int_{\partial B(0,r)} \left[ (x,\nu) \left(\frac{|\nabla u|^2}{2}-\frac{u^{\crits}}{\crits|x|^s}+\mu\frac{u^{q+1}}{q+1}
\right)-T(x,u)\, \partial_\nu u\right]d\sigma,
\end{equation} where $T(x,u)=(x,\nabla u(x))+(n-2) u(x)/2$. Here, $\nu$ denotes the unit outward normal at $\partial B(0,r)$.  
Assuming $u$ satisfies \eqref{upli}, it was shown in \cite{CirRob} that there exists $\lim_{r\to 0^+}P_r^{(q)}(u)  :=P^{(q)}(u)$ and 
	\begin{equation} \label{van} P^{(q)}(u)\geq 0\end{equation} 
with strict inequality if and only if $u$ is a (CGS) solution of \eqref{Eq0}. We refer to $P^{(q)}(u)$ as the {\em asymptotic Pohozaev integral}.   
We introduce the notation 
\begin{equation} \label{lamb}  \lambda:=(n-2)(2^\star-1-q)/2\ \ \text{and}\ \ 
c_{\mu,q,n}:= \lambda\mu /(q+1).
\end{equation}
Both $\lambda$ and $c_{\mu,q,n} $ are positive by the assumption $q\in (1,\crit-1)$. 

\subsection{Proof of Theorem~\ref{thm2}} Let $q\in (1, \crit-1)$. 
Suppose that \eqref{diri} admits a positive smooth solution $u$ satisfying \eqref{upli}.  
From $u=0$ on $\partial \Omega$, we have $\nabla u=\left(\partial_\nu u\right) \nu$ for $x\in \partial \Omega$, where $\nu$ denotes the unit outward normal at $\partial \Omega$. 
For every $r>0$ small, 
by applying the Pohozaev identity as in \cite{CirRob}*{Proposition~6.1} for $\omega=\omega_r=\Omega\setminus \overline{B(0,r)}$, we get that
\begin{equation} \label{find}  -\frac{1}{2}\int_{\partial \Omega}  (x,\nu) |\nabla u|^2\,d\sigma = 
P_r^{(q)}(u) +c_{\mu,q,n}  \int_{\omega_r} u^{q+1}\,dx. 
\end{equation} 
 By letting $r\to 0^+$ in \eqref{find} and using \eqref{van}, we arrive at 
 \begin{equation} \label{find2}  -\frac{1}{2}\int_{\partial \Omega}  (x,\nu) |\nabla u|^2\,d\sigma = 
P^{(q)}(u) +c_{\mu,q,n}
 \int_{\Omega} u^{q+1}\,dx\geq 0. 
\end{equation} 
Since $\Omega$ is star-shaped with respect to the origin, we have $(x,\nu)>0$ on $\partial \Omega$. Then, \eqref{find2} can only hold when $\nabla u\equiv 0$ on $\partial \Omega$ and $u\equiv 0$ in $\Omega$. Hence, 
\eqref{diri} has no positive smooth solutions satisfying \eqref{upli}. Using the comments before statement of Theorem~\ref{thm2}, we finish the proof. \qed

\subsection{Uniform {\em a priori} estimates} 
Let $q\in (1,\crit-1)$. 
For the positive  
radial solutions $u$ of \eqref{Eq0} satisfying \eqref{upli}, we derive uniform {\em a priori} estimates.  
These are crucial for proving the existence of (MB) solutions in Proposition~\ref{mb} and (CGS) solutions in Proposition~\ref{CGS}. We define 
\begin{equation}
\label{defyz}
\left\{ 
\begin{aligned}
 & {\bar R}(u):=\sup \{R>0:\ u\ \text{is a positive radial solution of \eqref{Eq0}}\}, \\
& \ z(r):=r^{\frac{n-2}{2}} u(r)\ \text{for } r\in (0,R),\ \  F_0(\xi):=\frac{(n-2)^2}{4}\xi^2-\frac{2}{2^\star(s)}\xi^{2^\star(s)}\ \
\text{for } \xi\geq 0. 
\end{aligned}\right.
\end{equation}
If $u$ has a removable singularity at $0$ or $u$ is a solution of (MB) type,  then $\liminf_{r\to 0^+}z\(r\)=0$. If $u$ is a (CGS) solution, then  from~\cite{CirRob}, we can derive that
\begin{equation}\label{P7}
0<\liminf_{r\to 0^+}z\(r\)\le
\left[(n-2)/2\right]^{2/(2^\star(s)-2)}:=M_0.
\end{equation}
For $R>0$, we also define 
\begin{equation} \label{psir} F_R(\xi):=\frac{(n-2)^2}{4}-\frac{2}{2^\star(s)} \xi^{2^\star(s)-2}+
\frac{2\mu R^{\lambda} \xi^{q-1}}{q+1}\ \ \text{for } \xi\geq 0.\end{equation}
For $F_0$ given by \eqref{defyz}, let $\Lambda_0$ denote the unique positive solution of $F_0(\xi)=0$, that is 
\begin{equation}\label{PrEq1}
\Lambda_0:=\[(n-2)(n-s)/4\]^{\frac{1}{2^\star(s)-2}}. 
\end{equation} 
For any $\Lambda>\Lambda_0$, we have $F_0(\Lambda)<0$. Let $R_\Lambda$ denote the unique $R>0$ for which $F_{R}(\Lambda)=0$: 
\begin{equation} \label{rlambda} R_\Lambda:= \left[-\frac{(q+1)F_0(\Lambda)}{2\mu \Lambda^{q+1}}\right]^{\frac{1}{\lambda}}>0.
\end{equation}
	Moreover, it holds
\begin{equation}\label{RLambda}
R_\Lambda=\sup\left\{\xi>0:\,F_{R_\Lambda}\(t\)>0\quad \text{for all } t\in (0,\xi)\right\}.
\end{equation}

\begin{proposition}[Uniform {\em a priori} estimates] \label{Pr} Let $q\in (1,2^\star-1)$. Then for every $\Lambda>\Lambda_0$, there exists $R_\Lambda>0$ as in \eqref{rlambda} such that any positive radial solution of \eqref{Eq0} with $R\in (0,R_\Lambda)$ satisfying \eqref{upli} can be extended as a positive radial 
	solution of \eqref{Eq0} in $B(0,R_\Lambda]\setminus\{0\}$  and
	\begin{equation}\label{PrEq2}
	r^{\frac{n-2}{2}}u\(r\)<\Lambda \qquad\text{for all } r\in (0,R_\Lambda].
	\end{equation}
\end{proposition}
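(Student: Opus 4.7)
The plan is to recast the radial problem via the Emden--Fowler change of variables and exploit a monotone energy that ties $z$ to the function $F_r$. Setting $t := -\log r$ and $v(t) := z(r)$, the radial form of \eqref{Eq0} becomes
\[
v''(t) = \tfrac{(n-2)^2}{4} v - v^{\crits - 1} + \mu e^{-\lambda t} v^q,
\]
and I introduce the natural companion energy
\[
E(t) := \tfrac{1}{2} (v'(t))^2 - \tfrac{(n-2)^2}{8} v^2 + \tfrac{v^{\crits}}{\crits} - \tfrac{\mu e^{-\lambda t}}{q+1} v^{q+1}.
\]
Differentiating along solutions gives $E'(t) = \tfrac{\lambda \mu}{q+1}\, e^{-\lambda t} v^{q+1} \geq 0$, and one also checks the algebraic identity $(v'(t))^2 = 2 E(t) + v(t)^2\, F_r(v(t))$ with $r = e^{-t}$, which will let the sign of $E$ dictate the sign of $F_r(v)$.

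Next I identify $E$ with a normalized Pohozaev integral. Substituting $u(r) = r^{-(n-2)/2} v(t)$ into \eqref{poh-int} and expanding, every boundary contribution on $\partial B(0, r)$ collapses into the components of $E$, yielding $P_r^{(q)}(u) = -\omega_{n-1}\, E(t)$, where $\omega_{n-1}$ denotes the area of the unit sphere. Letting $r \to 0^+$ and invoking \eqref{van} gives $\omega_{n-1}\, E(+\infty) = -P^{(q)}(u) \leq 0$, and the $t$-monotonicity of $E$ upgrades this to the decisive pointwise bound $E(t) \leq 0$ throughout the interval of existence.

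With $E \leq 0$ in hand, the algebraic identity forces $v^2 F_r(v) \geq (v')^2 \geq 0$, hence $F_r(z(r)) \geq 0$. Since the coefficient of $R^\lambda$ in \eqref{psir} is positive, $R \mapsto F_R(\xi)$ is strictly increasing for $\xi > 0$; for $r < R_\Lambda$ this gives $F_{R_\Lambda}(z(r)) > F_r(z(r)) \geq 0$, so $z(r) \in (0, \Lambda)$ by \eqref{RLambda}, while at $r = R_\Lambda$ the same characterization only yields $z(R_\Lambda) \in (0, \Lambda]$. To rule out the boundary case $z(R_\Lambda) = \Lambda$, I argue by contradiction: tracing through the identity forces $v'(t_0) = 0$ and $F_{R_\Lambda}(\Lambda) = 0$ (with $t_0 := -\log R_\Lambda$), hence $E(t_0) = 0$; together with $E \leq 0$ on $[t_0, +\infty)$ and the $t$-monotonicity, this pins $E \equiv 0$ on $[t_0, +\infty)$, and therefore $v \equiv 0$ there by the strict positivity of $E'$ whenever $v > 0$, contradicting $u > 0$.

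For the extension, the a priori bound $z < \Lambda$ keeps $u$ and, via the ODE, $u'$ bounded on every compact subinterval of $(0, R_\Lambda]$ on which $u$ is already defined, so standard ODE continuation extends $u$ past every interior point. Positivity along the extension is preserved by a localized Pohozaev argument in the spirit of Theorem~\ref{thm2}: were $u$ to first vanish at some $r_1 \in (0, R_\Lambda]$, applying the Pohozaev identity on $B(0, r_1) \setminus B(0, \varepsilon)$ and sending $\varepsilon \to 0^+$ would make the boundary integral on $\partial B(0, r_1)$ non-positive while the remaining terms are $\geq 0$ by \eqref{van}, forcing $u \equiv 0$ in $B(0, r_1)$. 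The step I expect to be most delicate is the sharpening of $z \leq \Lambda$ to $z < \Lambda$ at the endpoint $r = R_\Lambda$, where the weak monotonicity $E' \geq 0$ is insufficient and the strict positivity of $E'$ for $v > 0$ is indispensable.
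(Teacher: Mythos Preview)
Your approach is, at its core, the paper's own: the energy $E(t)$ you introduce is exactly $-P_r^{(q)}(u)/\omega_{n-1}$ (as you note), your identity $(v')^2 = 2E + v^2 F_r(v)$ is a rewriting of \eqref{red}, and your monotonicity $E'\ge 0$ is the differential form of \eqref{dif}/\eqref{trei}. The paper bundles all of this into Lemma~\ref{pos}(a) as the identity \eqref{hihi1}, which you are essentially re-deriving.

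There is, however, one genuine gap. You infer $z(r)\in(0,\Lambda)$ from $F_{R_\Lambda}(z(r))>0$ ``by \eqref{RLambda}'', but \eqref{RLambda} only says that $\Lambda$ is the \emph{first} positive zero of $F_{R_\Lambda}$; it does not assert $F_{R_\Lambda}\le 0$ on $[\Lambda,\infty)$. In fact, for $q>\crits-1$ the term $\tfrac{2\mu R_\Lambda^{\lambda}}{q+1}\,\xi^{q-1}$ eventually dominates and $F_{R_\Lambda}(\xi)\to+\infty$ as $\xi\to\infty$, so $F_{R_\Lambda}$ becomes positive again for large $\xi$. Thus $F_{R_\Lambda}(z(r))>0$ alone does not place $z(r)$ below $\Lambda$.

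The repair is a continuity argument, which is exactly how the paper proceeds. From your own inequality $F_r(z(r))\ge 0$, letting $r\to 0^+$ (using $r^{\lambda}\to 0$ and the boundedness of $z$ from \eqref{upli}) already gives $\limsup_{r\to 0^+} z(r)\le \Lambda_0<\Lambda$; the paper instead invokes \eqref{P7} to get $\liminf_{r\to 0^+} z(r)\le M_0<\Lambda$. Either way, if $z(r_0)\ge\Lambda$ for some $r_0\in(0,R_\Lambda]$, the intermediate value theorem produces $r_1\in(0,r_0]$ with $z(r_1)=\Lambda$, and then your identity at $r_1$ (equivalently the paper's \eqref{hihi1}) yields the contradiction directly: the right-hand side of \eqref{hihi1} is \emph{strictly} positive because of the integral term $\int_0^{r_1}\xi^{n-1}u^{q+1}\,d\xi$, while the left-hand side is $\Lambda^2 F_{r_1}(\Lambda)\le \Lambda^2 F_{R_\Lambda}(\Lambda)=0$. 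This single stroke also covers the endpoint $r_1=R_\Lambda$, so your separate endpoint argument (which is correct) becomes unnecessary. Finally, the paper organizes the extension via the explicit lower bounds on $\bar R(u)$ in Lemma~\ref{pos}(c)--(e) together with $R_\Lambda\le R^*$; your route via the a~priori bound, ODE continuation, and a Pohozaev obstruction to vanishing is an acceptable alternative in the same spirit, but be careful with the logical order: the bound $z<\Lambda$ must first be established on $(0,\bar R)\cap(0,R_\Lambda)$ before it can be used to push $\bar R$ past $R_\Lambda$.
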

Let $\omega_{n-1}$ denote the volume of the Euclidean 
$(n-1)$-sphere $\mathbb S^{n-1}$ in $\R^n$. Let $\lambda$ and $c_{\mu,q,n}$ be given by \eqref{lamb}. For $q>\crits-1$, we define $\ell_q$ as follows
	\begin{equation} \label{grand}	\ell_q:=
\frac{(2-s)(q+1)}{ (n-s)(q-1)} \left[\frac{(n-2)(n-s)(q-1)}{4(q-2^\star(s)+1)}\right]^{-\frac{q-2^\star(s)+1}{2^\star(s)-2}}. \end{equation}

\noindent A key tool in proving Proposition~\ref{Pr} is given by Lemma~\ref{pos}, which is of interest in its own.  

\begin{lemma} \label{pos}  Let $q\in (1,\crit-1)$. Let $u$ be a positive radial solution of \eqref{Eq0} satisfying
	\eqref{upli}. 
	\begin{itemize}
		\item[{\rm (a)}] For all $r\in (0,\bar R)$, the functions $z$ and $F_r(z)$ in \eqref{defyz} and \eqref{psir}, respectively satisfy 
		\begin{equation} \label{hihi1} z^2(r) \,F_r(z(r))
		=\frac{2P^{(q)}(u)}{\omega_{n-1}}+ [rz'(r)]^2+2 c_{\mu,q,n} \int_0^r \xi^{n-1} u^{q+1}(\xi)\,d\xi.
		\end{equation} 
		\item[{\rm (b)}] If $\bar R<+\infty$, then  
		$\liminf_{r\nearrow \bar R} u(r)>0$ and $\limsup_{r\nearrow \bar R}u(r)=+\infty$. 		
		\item[{\rm (c)}] If $1<q<\crits-1$, then $\bar R=+\infty$. 
		\item[{\rm (d)}] If $q=\crits-1$, then $\bar R\geq (1/\mu)^{1/s}$. 
		\item[{\rm (e)}] If $q\in (\crits-1,\crit-1)$, then $\bar R>(\ell_q/\mu)^{1/\lambda}$, where 
			$\ell_q$ is given by \eqref{grand}. 		
		\end{itemize} 
\end{lemma}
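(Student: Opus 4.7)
My plan is to establish (a)--(e) sequentially, using the Pohozaev identity of part~(a) as the master tool and then exploiting the shape of $F_r$ across the various ranges of $q$.

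\emph{Part (a).} I apply the standard Pohozaev identity on the annulus $B(0,r)\setminus B(0,\rho)$ to the equation~\eqref{Eq0}, multiplying by $T(x,u)=(x,\nabla u)+\tfrac{n-2}{2}u$ and integrating by parts. The critical Hardy--Sobolev term contributes zero to the bulk because of its conformal invariance, leaving $P_r^{(q)}(u)-P_\rho^{(q)}(u)=c_{\mu,q,n}\int_{B(0,r)\setminus B(0,\rho)}u^{q+1}\,dx$. Letting $\rho\to 0^+$ and using $P_\rho^{(q)}(u)\to P^{(q)}(u)$ from~\cite{CirRob} yields $P_r^{(q)}(u)=P^{(q)}(u)+\omega_{n-1}c_{\mu,q,n}\int_0^r\xi^{n-1}u^{q+1}(\xi)\,d\xi$. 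For radial $u$, writing out the surface integral on $\partial B(0,r)$ explicitly in terms of $u(r)$ and $u'(r)$, and then substituting the algebraic identity $[rz'(r)]^2=\tfrac{(n-2)^2}{4}r^{n-2}u^2+(n-2)r^{n-1}uu'+r^n(u')^2$, rearranges $-2\omega_{n-1}^{-1}P_r^{(q)}(u)$ into $z^2(r)F_r(z(r))-[rz'(r)]^2$, producing~\eqref{hihi1}.

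\emph{Part (b).} Assume $\bar R<+\infty$. If $\limsup_{r\nearrow\bar R}u<\infty$, then $z$ and hence $F_r(z)$ are bounded on $(0,\bar R)$, so~\eqref{hihi1} forces $[rz'(r)]^2\le z^2F_r(z)$ to be bounded as well, whence $u'$ stays bounded; standard ODE continuation then extends $u$ past $\bar R$, contradicting maximality. Hence $\limsup u=+\infty$. For $\liminf u>0$, first note that a positive radial solution cannot vanish at an interior point: at a putative interior minimum $r_0$ with $u(r_0)=0$, the ODE forces $u'(r_0)=0$, and uniqueness then makes $u\equiv 0$. So if $\liminf_{r\nearrow\bar R}u=0$, there is a sequence $r_k\nearrow\bar R$ with $u(r_k)\to 0$, giving $z(r_k)\to 0$ (since $\bar R<\infty$) and hence $z(r_k)^2F_{r_k}(z(r_k))\to 0$ because $F_r(0^+)=(n-2)^2/4$ is finite. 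But the right-hand side of~\eqref{hihi1} is bounded below by the strictly positive $2c_{\mu,q,n}\int_0^{r_1}\xi^{n-1}u^{q+1}d\xi$ for any fixed $r_1\in(0,\bar R)$, a contradiction.

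\emph{Parts (c)--(d).} Both rest on the inequality $F_r(z(r))\ge 0$, which is immediate from~\eqref{hihi1} together with~\eqref{van}. For (c), where $q<\crits-1$, the leading term $-\tfrac{2}{\crits}\xi^{\crits-2}$ makes $F_r(\xi)\to-\infty$ as $\xi\to\infty$ uniformly for $r$ in any bounded interval, so $z$ is trapped below a locally bounded root of $F_r$; combined with (b), this forces $\bar R=+\infty$. For (d), the choice $q=\crits-1$ gives $\lambda=s$ and reduces $F_r$ to $F_r(\xi)=\tfrac{(n-2)^2}{4}+\tfrac{2(\mu r^s-1)}{\crits}\xi^{\crits-2}$; when $r<(1/\mu)^{1/s}$ the coefficient of $\xi^{\crits-2}$ is negative and $F_r(z)\ge 0$ yields an explicit upper bound on $z$ that is uniform on compact subintervals of $[0,(1/\mu)^{1/s})$, so $\bar R\ge (1/\mu)^{1/s}$ by (b).

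\emph{Part (e).} This is the most delicate step and I expect it to be the main obstacle, because for $q\in(\crits-1,\crit-1)$ one has $F_r(\xi)\to+\infty$ at infinity, so the sign constraint no longer controls $z$. First I locate the threshold: from $F_r'(\xi^*)=0$ one reads $\xi^*(r)^{q-\crits+1}=\tfrac{(q+1)(\crits-2)}{\mu r^\lambda(q-1)\crits}$, and the condition $F_r(\xi^*)=0$ together with the identity $\tfrac{\crits-2}{\crits}=\tfrac{2-s}{n-s}$ produces exactly $\mu r^\lambda=\ell_q$, so that $\min_{\xi\ge 0}F_r(\xi)>0$ precisely when $r<(\ell_q/\mu)^{1/\lambda}$. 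Second, to exclude blow-up in this range, I plan to combine~\eqref{hihi1} with the Emden--Fowler formulation: with $v(t):=r^{(n-2)/2}u(r)$ and $t=-\log r$, the equation becomes $v_{tt}=\tfrac{(n-2)^2}{4}v-v^{\crits-1}+\mu e^{-\lambda t}v^q$, and the associated energy $E(t):=\tfrac{v_t^2}{2}-\tfrac{(n-2)^2v^2}{8}+\tfrac{v^{\crits}}{\crits}-\tfrac{\mu e^{-\lambda t}v^{q+1}}{q+1}$ satisfies $E'(t)=\tfrac{\mu\lambda}{q+1}e^{-\lambda t}v^{q+1}\ge 0$ and equals $-P^{(q)}(u)/\omega_{n-1}-c_{\mu,q,n}\int_0^r\xi^{n-1}u^{q+1}d\xi\le 0$. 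Hence $(rz')^2=z^2F_r(z)+2E\le z^2F_r(z)$ and $|d\log z/d\log r|\le\sqrt{F_r(z)}$. Using the non-(ND) bound $\limsup_{r\to 0}z(r)<\infty$ together with the geometry that $\xi^*(r)\propto r^{-\lambda/(q-\crits+1)}\to+\infty$ as $r\to 0$, the trajectory $(r,z(r))$ sits in the region $z<\xi^*(r)$ for $r$ near zero; on that region $F_r(z)\le\tfrac{(n-2)^2}{4}$ since $F_r$ is decreasing on $[0,\xi^*(r)]$. A continuation argument, tracking the trajectory against the monotone graph $r\mapsto\xi^*(r)$ and using the quadrature bound above, then rules out a crossing $z(r)=\xi^*(r)$ before $r=(\ell_q/\mu)^{1/\lambda}$. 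The hard part is verifying that the crossing is forbidden with the exact threshold $\ell_q$; the precise constant in~\eqref{grand} is essential for the bound to be sharp at the critical value where $\min F_r=0$.
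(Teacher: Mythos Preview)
Your treatment of parts (a)--(d) is correct and matches the paper's proof essentially line for line (modulo a sign slip in~(a): it is $+2\omega_{n-1}^{-1}P_r^{(q)}(u)$, not $-2\omega_{n-1}^{-1}P_r^{(q)}(u)$, that equals $z^2F_r(z)-[rz']^2$).

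Part~(e), however, has a genuine gap. First, your threshold computation is inverted: since $F_R(\xi)$ is \emph{increasing} in $R$ (the coefficient of $R^\lambda$ is positive), one has $\min_{\xi\ge 0}F_r(\xi)>0$ precisely when $r>(\ell_q/\mu)^{1/\lambda}$, not when $r<(\ell_q/\mu)^{1/\lambda}$. Thus for $r$ below the threshold the function $F_r$ has a \emph{negative} well between two roots $\xi_-(r)<\xi^*(r)<\xi_+(r)$, and there is no uniform positivity to exploit. Second, even with the sign corrected, your quadrature bound $|d\log z/d\log r|\le\sqrt{F_r(z)}\le(n-2)/2$ on the region $\{z<\xi^*(r)\}$ does not by itself forbid a crossing of $\xi^*(r)$; it bounds the growth rate of $z$ but says nothing about why $z$ should stay below the descending curve $\xi^*(r)$ up to the exact value $(\ell_q/\mu)^{1/\lambda}$. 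You acknowledge this is ``the hard part'' but give no mechanism, and the argument would at best yield $\bar R\ge(\ell_q/\mu)^{1/\lambda}$, not the strict inequality.

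The paper's route is shorter and avoids any trajectory tracking. One argues by contradiction: assume $\bar R<\infty$. From~(a), the right-hand side of~\eqref{hihi1} is \emph{strictly} positive for every $r\in(0,\bar R)$, so $F_r(z(r))>0$. By~(b), $\limsup_{r\nearrow\bar R}z(r)=+\infty$, while $\liminf_{r\to 0}z(r)$ is either $0$ (removable or (MB)) or lies in $(0,M_0]$ ((CGS)); in either case, by the intermediate value theorem, $z$ hits every value $\xi_0$ in a half-line containing the global minimiser $\xi_c$ of $F_{\bar R}$ at some $r^*<\bar R$. Then the monotonicity $F_{r^*}(\xi_0)<F_{\bar R}(\xi_0)$ and the constraint $F_{r^*}(\xi_0)>0$ force $F_{\bar R}(\xi_0)>0$ for all such $\xi_0$, in particular $F_{\bar R}(\xi_c)>0$, which is equivalent to $\bar R>(\ell_q/\mu)^{1/\lambda}$. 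The case split (removable/MB versus CGS) is needed only to pin down the lower end of the range of $z$; you do not address it, and it is essential in the (CGS) case where one must also handle the possibility $M_0>\xi_c$ separately.
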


\begin{remark} We have $\ell_q\to 1$ as $q\searrow 2^\star(s)-1$ and using $F_0$ in \eqref{defyz}, we get
	\begin{equation} \label{lqsup}
	\ell_q=\frac{q+1}{2}\sup_{\Lambda\in (\Lambda_0,\infty)} \frac{-F_0(\Lambda)}{\Lambda^{q+1}}.
	\end{equation}
\end{remark}

\begin{proof} From our assumptions, it follows that $\lim_{r\to 0^+} r^n u^{q+1}(r)=0$. 
%	Let $q\in (1,2^\star-1)$ and $u$ be a positive radial solution of \eqref{Eq0} satisfying \eqref{upli}. 	
	\begin{proof}[Proof of (a)]
			Since $u$ is a radial solution of \eqref{Eq0}, 
	the {\em Pohozaev-type 
			integral} $P_r^{(q)}(u)$ satisfies 
		\begin{equation} \label{red} \frac{2P_{r}^{(q)}(u)}{\omega_{n-1}} = 
		-[rz'(r)]^2+z^2(r)\,F_r(z(r))\quad \text{for all } r\in (0,\bar R). 
		\end{equation}
		By the Pohozaev identity, see \cite{CirRob}*{Proposition~6.1}, for every $0<r_1<r<\bar R$, we find that 
		\begin{equation} \label{dif}
		P_{r}^{(q)}(u)-P_{r_1}^{(q)}(u)=\omega_{n-1} c_{\mu,q,n} \int_{r_1}^{r} \xi^{n-1} u^{q+1}(\xi)\,d\xi.
		\end{equation} 
	Letting $r_1\to 0^+$ in \eqref{dif}, for any $r\in (0,\bar R)$, we find that 
		\begin{equation} \label{trei}
		P_r^{(q)}(u)=P^{(q)}(u)+\omega_{n-1} c_{\mu,q,n} \int_0^r \xi^{n-1} u^{q+1}(\xi)\,d\xi. 
		\end{equation}
	Then we conclude \eqref{hihi1} by using \eqref{red} and \eqref{trei}. The proof of (a) is now complete. \qed
	\end{proof}

	\begin{proof}[Proof of (b)] 
		Assume that $\bar R<+\infty$. To prove that $\liminf_{r\nearrow \bar R}u(r)>0$, we proceed by contradiction. Assume that for a sequence $(r_k)_{k\geq 1}$ of positive numbers with $r_k\nearrow \bar R$ as $k\to \infty$, we have 
		$\lim_{k\to \infty} u(r_k)= 0$, that is $\lim_{k\to \infty} z(r_k)=0$. We let $r=r_k$ in \eqref{hihi1}, then pass to the limit $k\to \infty$ to obtain 
		a contradiction. 
		   	For the other claim in (b), assume that $\limsup_{r\nearrow \bar R} u(r)<+\infty$. Then $\limsup_{r\nearrow \bar R} z(r)<+\infty$ since $\bar R<+\infty$. By the classical ODE theory, it follows that $\limsup_{r\nearrow \bar R}|u'(r)|=\infty$.
			On the other hand, by \eqref{hihi1}, we get that $\limsup_{r\nearrow \bar R} |rz'(r)|<+\infty$, which   
		shows that 
		$\limsup_{r\nearrow \bar R} |u'(r)|<+\infty$.  
		This contradiction completes the proof of (b).	  		\qed  
	\end{proof}

\begin{proof}[Proof of (c)] Let $q<2^\star(s)-1$. 
	If $\bar R<\infty$, then there exists a sequence $(r_k)_{k\geq 1}$ in $(0,\bar R)$ with $\lim_{k\to \infty}r_k=\bar R$ and $\lim_{k\to \infty} z(r_k)= +\infty$. By letting $r=r_k$ in \eqref{hihi1} and $k\to \infty$, the left-hand side of \eqref{hihi1} diverges to $-\infty$ as $k\to \infty$, which is a contradiction. This proves that $\bar R=+\infty$. \qed \end{proof}

\begin{proof}[Proof of (d)] 
	Let $q=2^\star(s)-1$. We argue by contradiction. Assume that $\bar R<\left(1/\mu\right)^{1/s}$. Then,  
	there exists $(r_k)_{k\geq 1}$ in $(0,\bar R)$ with $\lim_{k\to \infty}r_k= \bar R$ and $\lim_{k\to \infty}z(r_k)= +\infty$.  
	Since $r^{\lambda}=r^{s}<\bar R^{s}$ for all $r\in (0,\bar R)$, from \eqref{hihi1} and the definition of $F_r$ in \eqref{psir} (with $R=r$), we have  
	\begin{equation} \label{figu}  \frac{(n-2)^2 z^2(r_k)}{4}-
	\frac{2\left(1-\mu \bar R^{s}\right)z^{2^\star(s)}(r_k)}{2^\star(s)}> 0 \quad \text{for all }
	k\geq 1.
	\end{equation}   
	By letting $k\to \infty$ in \eqref{figu} and using that $1-\mu \bar R^{s}>0$, we get that the left-hand side of \eqref{figu} tends to $-\infty$ as $k\to \infty$. This contradiction proves that $\bar R\geq \left(1/\mu\right)^{1/s}$. \qed \end{proof}

\begin{proof}[Proof of (e)]  
	Let $q\in (2^\star(s)-1,2^\star-1)$. To prove $\bar R>\left(\ell_q/\mu\right)^{1/\lambda}$ with
	$\ell_q$ as in \eqref{grand},  
	it suffices to assume $\bar R<+\infty$. 
	Let $F_{\bar{R}}$ be the function $F_R$ in \eqref{psir} with $R=\bar R$. 
%	We consider the function , where $F_R$ is defined in \eqref{psir} for all $R>0$. 
	We distinguish two cases:

	\smallskip\noindent {\sc Case 1}: If $u$ has a removable singularity at $0$, or $u$ is a (MB) solution, then 
	$\liminf_{r\to 0^+} z(r)=0$ using that $z(r)=r^{\frac{n-2}{2}} u(r)$. 
	Since $\limsup_{r\nearrow {\bar R}} z(r)=+\infty$, to ensure \eqref{hihi1} for a positive radial solution $u$ of \eqref{Eq0} which is {\em not} (CGS)  {\em nor} (ND), it is necessary to have 
	\begin{equation} \label{exi}
	F_{\bar{R}}(\xi)>0 \quad \text{for all }
	\xi\in [0,\infty).  	
	\end{equation} 
	We next study the monotonicity of $F_{\bar{R}}$. We see that $F_{\bar{R}}$ has only one positive critical point $\xi_c$ defined by
	\begin{equation} \label{definX} \xi_c:=\left( \frac{(2-s)(q+1)}{\mu (n-s)(q-1) \bar R^{\lambda} }\right)^{\frac{1}{q-2^\star(s)+1}}. 
	\end{equation} Moreover, $\xi_c$ is a global minimum point for $\bar F$ on $[0,\infty)$. Thus, \eqref{exi} holds if and only if 
	$F_{\bar{R}}(\xi_c)> 0$, which corresponds to $\bar R>\left(\ell_q/\mu\right)^{1/\lambda}$.

	\vspace{0.2cm}	
	\noindent {\sc Case 2:} If $u$ is a radial (CGS) solution of \eqref{Eq0} then we need $F_{\bar{R}}(\xi)>0$ for every $\xi\geq \liminf_{r\to 0^+} z(r)$. If $M_0$ in \eqref{P7} satisfies $M_0\leq \xi_c$ then $\bar R>\left(\ell_q/\mu\right)^{1/\lambda}$ is necessary to have $F_{\bar{R}}(\xi)>0$ for every $\xi\in [\liminf_{r\to 0^+} z(r),+\infty)$.     
	If $M_0> \xi_c $, then from \eqref{definX} and \eqref{P7}, we get
	\begin{equation} \label{minim2} 
	\bar R^\lambda > 
	\frac{(2-s)(q+1)}{\mu (n-s)(q-1)} \left(\frac{n-2}{2}\right)^{-\frac{2(q-2^\star(s)+1)}{2^\star(s)-2}},
	\end{equation}	which again implies $\bar R>\left(\ell_q/\mu\right)^{1/\lambda}$.
	
	\vspace{0.2cm}
	\noindent We have established the assertion of (e) in both Cases 1 and 2. \qed
\end{proof} 
\noindent This completes the proof of Lemma~\ref{pos}.  \qed
%	\noindent The proof of Lemma~\ref{pos} is now finished.  
	\end{proof}

%\begin{cor} \label{corr} Any positive radial solution of \eqref{Eq0} with $\mu=0$ can be extended to a positive solution of \eqref{rnsol}. \end{cor}

%\section{Properties of non-(ND) solutions} \label{propnon}

%

%We prove a lower bound for $\bar{R}(u)$:

%\begin{lemma} \label{all} Let $q\in (1,2^\star-1)$ and $u$ be a positive radial solution of \eqref{Eq0} satisfying \eqref{upli}. 
%	%Then for $\bar R$ in \eqref{defyz}, we have either
%	%\begin{equation} \label{opt}  \bar R=+\infty\ \text{or }\  \limsup_{r\nearrow \bar R}u(r)=+\infty\ \text{with }0<\bar R<+\infty. \end{equation}     
%	\begin{enumerate}
%		\item[(a)] If $1<q<2^\star(s)-1$, then $\bar R=+\infty$. 
%		\item[(b)] If $q=2^\star(s)-1$, then $\bar R\geq \left(1/\mu\right)^{1/s}$. 
%		\item[(c)] If $q\in (2^\star(s)-1,2^\star-1)$, then $\bar R>\left(\ell_q/\mu\right)^{1/\lambda}$, where 
%		$\lambda$ is given by \eqref{lamb} and 
%		\begin{equation} \label{grand}	\ell_q:=
%\frac{(2-s)(q+1)}{ (n-s)(q-1)} \left[\frac{(n-2)(n-s)(q-1)}{4(q-2^\star(s)+1)}\right]^{-\frac{q-2^\star(s)+1}{2^\star(s)-2}}. \end{equation} 
%	\end{enumerate} 
%\end{lemma}

%We next prove Proposition~\ref{Pr} using 
%Lemma~\ref{pos} and Lemma~\ref{all} to be proved in Section~\ref{sec-all}.    

%\subsection{Proof of Proposition~\ref{Pr} assuming that Lemma~\ref{all} holds }\label{Sec2}

\noindent {\bf Proof of Proposition~\ref{Pr}.} 
	For any $q\in [2^\star(s)-1,2^\star-1)$, we 
	denote
	$R^*=R^*(q)$ as follows
	$$ R^*:=\left\{\begin{aligned}
	& (1/\mu)^{1/s} && \text{if } q=2^\star(s)-1,&\\
	& (\ell_q/\mu)^{1/\lambda} && \text{if } 2^\star(s)-1<q<2^\star-1.&
	\end{aligned} \right.$$		
Let $\Lambda>\Lambda_0$ be fixed. Let $u$ be any positive radial solution of \eqref{Eq0} with $R\in (0,R_\Lambda)$ such that \eqref{upli} holds. From Lemma~\ref{pos}, 
	the maximum radius of existence $\bar R=\bar R(u)$ for $u$ satisfies
	$\bar R=+\infty$ if $1<q<2^\star(s)-1$,  $\bar R\geq R^*$ for $q=2^\star(s)-1$ and $\bar R>R^*$ for $2^\star(s)-1<q<2^\star-1$.
	From \eqref{lqsup} and \eqref{rlambda}, we have $R_\Lambda\leq R^*$ for all $2^\star(s)-1< q<2^\star-1$. When $q=2^\star(s)-1$, then using the definition of $F_0$ and $R^*$, we see easily that $R_\Lambda<R^*$.  
Hence, we can extend $u$ as a positive radial solution of \eqref{Eq0} in $B(0,R_\Lambda]\setminus\{0\}$ for all $1<q<2^\star-1$.  

\smallskip\noindent 
We now prove \eqref{PrEq2}.  
Assume by contradiction that \eqref{PrEq2} fails, that is, $z(r_0)\geq \Lambda$
for some $r_0\in (0,R_\Lambda]$, where $z(r):=r^{\frac{n-2}{2}}u(r)$ is defined as in \eqref{defyz}.  
Since $z(r_0)\geq \Lambda>\Lambda_0>M_0$, the Mean Value Theorem, together with \eqref{RLambda} and \eqref{P7}, gives that there exists $r_1\in\(0,r_0\)$ such that $z\(r_1\)=\Lambda$. Hence, using Lemma~\ref{pos}(a), we find that 
$ 0= \Lambda^2\,F_{R_{\Lambda}}(\Lambda) >0$.  This contradiction ends the proof of Proposition~\ref{Pr}. \qed
%	\end{proof}

%\subsection{Proof of Lemma~\ref{pos}} \label{pos-proof} 

%\subsection{Proof of Lemma~\ref{all}} \label{sec-all} 
%Let $q<2^\star-1$. If $\bar R<\infty$ and $\limsup_{r\nearrow \bar R}u(r)<+\infty$, then by Lemma~\ref{pos}(b) and the classical ODE theory, $\limsup_{r\nearrow \bar R}|u'(r)|=\infty$. Since this contradicts Lemma~\ref{pos}(c), we conclude \eqref{opt}.  

%	\qed

\section{Removable singularities}\label{Sec3}  
%In Lemma~\ref{remove} below, we prove that for any $\gamma>0$, there exists a unique positive radial solution $u_\gamma$ of \eqref{Eq0} in  $B(0,R)\setminus\{0\}$ (for some $R=R(\gamma)$) with a removable singularity at $0$ such that $\lim_{r\to 0^+} u_\gamma(r)=\gamma$. Assuming $q<2^\star-1$, Lemma~\ref{pos} gives that $R>0$ can be taken independent of $\gamma$, hence proving  

The assertion of Theorem~\ref{Th1}(i) follows from Lemma~\ref{pos} and Lemma~\ref{remove} below.     

\begin{lemma} \label{remove} For $q>1$ and every
	$\gamma\in (0,\infty)$, there exists $R>0$
	such that \eqref{Eq0} has a unique positive radial solution $u_\gamma$ 
	with a removable singularity at $0$ and 
	$\lim_{r\to 0^+} u_\gamma(r)=\gamma$.
\end{lemma}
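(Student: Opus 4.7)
The strategy is to remove the singularity of the coefficient $r^{-s}$ by a change of variable, thereby reducing \eqref{Eq0} (in its radial form) to a regular singular ODE of generalized Emden--Fowler type to which a classical IVP theorem can be applied.

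The first step is the computation of the transformation. Writing \eqref{Eq0} in radial form,
\begin{equation*}
-u''(r)-\frac{n-1}{r}u'(r)=r^{-s}u(r)^{\crits-1}-\mu u(r)^q,\qquad r\in(0,R),
\end{equation*}
I set $\xi:=r^{(2-s)/2}$ and $y(\xi):=u(r)$. A direct calculation shows that
\begin{equation*}
r^{s}\Delta u(r)=\Bigl(\tfrac{2-s}{2}\Bigr)^{2}\Bigl(y''(\xi)+\tfrac{N-1}{\xi}y'(\xi)\Bigr),\qquad N:=\frac{2(n-s)}{2-s},
\end{equation*}
so that multiplying \eqref{Eq0} by $r^{s}$ transforms it into the \emph{regular} equation
\begin{equation*}
y''(\xi)+\frac{N-1}{\xi}y'(\xi)+\Bigl(\tfrac{2}{2-s}\Bigr)^{2}\bigl(y(\xi)^{\crits-1}-\mu\,\xi^{2s/(2-s)}y(\xi)^{q}\bigr)=0
\end{equation*}
on $(0,T]$ with $T=R^{(2-s)/2}$. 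This is precisely an equation of the form treated in \cite{BRS}, namely a second-order ODE with a weak singularity of $1/\xi$-type in the linear part and a continuous, locally Lipschitz nonlinearity in $y$ (since we will work near $y=\gamma>0$). A removable singularity of $u$ at $r=0$ corresponds exactly to $y\in C^{0}([0,T])\cap C^{2}((0,T])$ with $y(0)=\gamma$ and the natural compatibility $(\xi^{N-1}y')(0)=0$ (equivalently $y'(0)=0$).

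The second step is to apply Theorems~1 and 2 of \cite{BRS} to this initial value problem. Since $\gamma>0$ is fixed and the nonlinearity is $C^{1}$ in $y$ on a neighbourhood of $\gamma$, the BRS existence theorem produces a positive $C^{2}$ solution $y$ on some interval $[0,T]$, and the BRS uniqueness theorem guarantees that this solution is the only one subject to the initial data $y(0)=\gamma$, $y'(0)=0$. Choosing $T$ possibly smaller, one also keeps $y(\xi)$ within a fixed neighbourhood of $\gamma$, so positivity is preserved throughout $[0,T]$.

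Finally, set $u_{\gamma}(r):=y(r^{(2-s)/2})$ on $[0,R]$ with $R:=T^{2/(2-s)}$. By construction $u_{\gamma}$ is a positive radial $C^{2}((0,R])$ solution of \eqref{Eq0} that extends continuously to $0$ with $u_{\gamma}(0)=\gamma$, which is exactly the removable-singularity property. Uniqueness of $u_{\gamma}$ among positive radial solutions with $\lim_{r\to 0^{+}}u(r)=\gamma$ is immediate from the uniqueness of $y$. The only mildly delicate point --- and the place where one has to be careful --- is to verify that the ODE we obtain after the substitution fits exactly into the framework of \cite{BRS} at $\xi=0$; this is where the precise choice $\xi=r^{(2-s)/2}$ is essential, because it is the unique exponent that turns the factor $r^{-s}$ into a bounded $\xi$-power and simultaneously recasts the Laplacian into the standard radial form in ``dimension'' $N$.
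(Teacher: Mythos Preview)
Your approach is exactly that of the paper: the substitution $\xi=r^{(2-s)/2}$, the resulting regular singular ODE (your $N-1$ coincides with the paper's $a=(2n-s-2)/(2-s)$), and the appeal to Theorems~1 and~2 of \cite{BRS} for existence and uniqueness. The existence part is complete.

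There is, however, a genuine gap in your uniqueness argument. You assert that a removable singularity with $\lim_{r\to 0^+}u(r)=\gamma$ ``corresponds exactly'' to the initial data $y(0)=\gamma$, $y'(0)=0$, and then say uniqueness is ``immediate''. But the BRS uniqueness theorem applies only to solutions satisfying \emph{both} initial conditions, so you must verify that an \emph{arbitrary} positive radial solution $u$ of \eqref{Eq0} with $\lim_{r\to 0^+}u(r)=\gamma$ actually yields $y'(\xi)\to 0$ as $\xi\to 0^+$. This is not automatic: a priori, $u'(r)$ could blow up like $r^{s-1}$ times a nonzero constant (which would still be integrable and compatible with $u$ having a finite limit). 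The paper closes this gap by showing from the ODE that $r^{n-1}u'(r)$ is monotone near $0$, ruling out a nonzero limit by a comparison with $r^{2-n}$, and then deducing $r^{s-1}u'(r)\to -\gamma^{\crits-1}/(n-s)$; translated back, this gives $y'(0)=0$ and even $y''(0)$ finite, so that $y\in C^2[0,T]$ and the BRS uniqueness applies. You should supply this step.
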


\begin{proof} Fix $\gamma\in (0,\infty)$  arbitrarily. 
We consider the following initial value problem:
	\begin{equation} \label{rem}
	\left\{ \begin{aligned}
	&  y''(\xi)+a\, y'(\xi)/\xi+4 (
		y^{2^\star(s)-1}-\mu\, \xi^{\frac{2s}{2-s}}\,y^q)/(2-s)^2 =0 \   \text{for  } \xi>0,\\
	&   y(0)=\gamma,\quad y'(0)=0,
	\end{aligned} \right.
	\end{equation}
	where we denote $a:=(2n-s-2)/(2-s)$. 
	By Biles--Robinson--Spraker \cite{BRS}*{Theorems~1 and 2}, 
	for every $\gamma>0$,  
	there exists a unique positive solution $y_\gamma$ 
	of \eqref{rem} on some interval $[0,T]$ 
	with $T>0$. A solution $y$ of \eqref{rem} is 
	defined in \cite{BRS} as follows:
	\begin{enumerate}
		\item[(a)] $y$ and $y'$ are absolutely continuous on $[0,T]$;
		\item[(b)] $y$ satisfies the ODE in \eqref{rem} a.e. on $[0,T]$;
		\item[(c)] $y$ satisfies the initial conditions in \eqref{rem}. 
	\end{enumerate}
Since $a>1$, the function $\xi\longmapsto \xi^a y_\gamma '(\xi)$ is absolutely continuous  
	on $[0,T]$. From \eqref{rem}, we have 
	$$ \left(\xi^a y_\gamma '(\xi)\right)'= -\frac{4}{(2-s)^2} \xi^a \left( 
	y_\gamma ^{2^\star(s)-1}-\mu\, \xi^{\frac{2s}{2-s}}\,y_\gamma ^q
	\right)\quad \text{a.e. in } [0,T].
	$$ 
	Thus, for all $\xi\in [0,T]$, we find that
	$$  \xi^a y_\gamma '(\xi)=-\frac{4}{(2-s)^2} \int_0^\xi 
	t^a \left( 
	y_\gamma ^{2^\star(s)-1}(t)-\mu\, t^{\frac{2s}{2-s}}\,y_\gamma ^q(t)\right)\,dt.
	$$
By the property (a) for $y_\gamma$, we  
	find that $y_\gamma \in C^2(0,T]$ satisfies the ODE in \eqref{rem}
	on $ (0,T]$. 
	The change of variable $u_\gamma(r)=y_\gamma(\xi)$ with $\xi=r^{(2-s)/2}$ yields that 
	$u_\gamma$ is a positive radial $C^2(0,R]$-solution 
	of \eqref{Eq0} with $R=T^{2/(2-s)}$ and 
	$\lim_{r\to 0^+}u_\gamma(r)=\gamma$. This proves the existence claim.
	
	\vspace{0.2cm}\noindent We now show the uniqueness claim: any 
	positive radial $C^2(0,R]$-solution $u$ of \eqref{Eq0} for some $R>0$ 
	such that $\lim_{r\to 0^+}u(r)=\gamma$ must coincide with 
	$u_\gamma$ on their common domain of existence. Indeed, using the
	change of variable $u(r)=y(\xi)$ with $\xi=r^{(2-s)/2}$, we get that 
	$y\in C^2(0,R^{(2-s)/2}]$ satisfies the differential equation in \eqref{rem} for all 
	$\xi\in (0,R^{(2-s)/2})$ and 
	$\lim_{\xi \to 0^+}y(\xi)=\lim_{r\to 0^+}u(r)=\gamma$. Hence, $y$ can be 
	extended by continuity at $0$ by defining $y(0)=\gamma$. 
	To conclude 
	that $y$ is a solution of \eqref{rem} on $[0,R^{(2-s)/2}]$ 
	in the sense of \cite{BRS}, that is, $y$ 
	satisfies properties (a)--(c) stated above with $T=R^{(2-s)/2}$, it suffices  
	to show that  
	\begin{equation} \label{f} 
%	\lim_{\xi\to 0^+}
	y'(\xi)\to 0\ \text{and }\ 
%	\lim_{\xi\to 0^+} 
	y''(\xi)\to -2\,\gamma^{2^\star(s)-1}/[(n-s)(2-s)]\ \ \text{as }  \xi\to 0^+.
	\end{equation} 
	This would give that $y\in C^2[0,R^{(2-s)/2}]$, and then, by applying Theorem~2 in 
	\cite{BRS}, we conclude that $y=y_\gamma$ on $[0,\min\{T, R^{(2-s)/2}\}]$, 
	proving our uniqueness assertion.

\medskip \noindent 
We prove \eqref{f}. Since $u$ is a positive radial solution of \eqref{Eq0} with $\lim_{r\to 0^+} u(r)=\gamma$, we have  
	\begin{equation} \label{rad}
	r^{-(n-1-s)} \left( r^{n-1} u'(r) \right)'=-u^{\crits-1}+\mu r^s u^q\to 
	-\gamma^{\crits-1} \quad \text{as } r\to 0^+.
	\end{equation}
	Hence, the function 
	$r\longmapsto r^{n-1} u'(r)$ is decreasing on some interval $(0,r_0)$ 
	for small 
	$r_0>0$. Thus, there exists $\lim_{r\to 0^+} r^{n-1} u'(r)=
	\theta\in (-\infty, \infty]$. We next show that $\theta=0$.
	Assume by contradiction that $\theta\not= 0$. Then choosing 
	$\min\{\theta,0\}<c<\max\{\theta,0\}$, 
	we find that $h(r)= u(r)+c(n-2)^{-1} r^{2-n}$ 
	is decreasing (respectively, increasing) on $(0,r_1)$ for $r_1>0$ 
	small when $\theta<0$ 
	(respectively, when $\theta>0$). Since $\lim_{r\to 0^+} h(r)=-\infty$ 
	if $\theta<0$ and $\lim_{r\to 0^+} h(r)=+\infty$ if $\theta>0$, 
	we arrive at a contradiction. This proves that 
	$\lim_{r\to 0^+} r^{n-1} u'(r)=0.$ Hence, by \eqref{rad}, we get that 
%	\begin{equation} \label{lim}
$	\lim_{r\to 0^+}r^{s-1}u'(r)=
	-\gamma^{\crits-1}/(n-s)$. 
	%\end{equation}
Coming back to the $\xi$ variable, we obtain \eqref{f}. 
This ends the proof of Lemma~\ref{remove}.    \qed
\end{proof}

\section{(MB) solutions}\label{Sec4}

In Sect.~\ref{sec-51} we prove Corollary~\ref{corol}. In Sect.~\ref{sec-52} we prove Theorem~\ref{Th1}(ii) given as Proposition~\ref{mb}. 

\subsection{Proof of Corollary~\ref{corol}} \label{sec-51}
%\begin{proof}[Proof of Corollary~\ref{corol}]
For every $0<\ell<\min\{(n-2)/2,2\}$, 
we set $q:=\crit -1 -2\ell/(n-2)$ so that 
$q\in  (\crit-2,\crit-1)$ with $q>1$. Then, for every $s\in (0,2)$, Theorem~\ref{Th1}(ii) yields a positive radial (MB) solution $u_{MB}$ of \eqref{Eq0} for some $R>0$.  
We define $z(r)=r^{(n-2)/2} u_{MB}(r)$ for $r\in (0,R)$. Since  
$z^*:=\limsup_{r\to 0^+}z(r)\in (0,\infty)$ and 
$z_*=\liminf_{r\to 0^+} z(r)=0$, the asymptotics of $u_{MB}$ at zero is different from that of any positive singular solution of \eqref{rnsol}. 
By defining 
$$K(r)=1-\mu r^s u_{MB}(r)^{q-\crits+1}\ \text{and } C_{s,\ell}:=2(s-\ell)/(n-2)\  \text{for } r= |x|\in (0,R),$$
we see that $u=u_{MB} $ is a positive singular solution of \eqref{sgt}. Moreover, we find that
\begin{equation} \label{home}
|r^{1-\ell} K'(r)| =\mu [z(r)]^{C_{s,\ell}}\left|\ell+ C_{s,\ell}\,rz'(r)/z(r)\right|\quad \text{for all } r\in (0,R) . \end{equation}
We have $C_{s,\ell}>0$ when $\ell<s$ and $C_{s,\ell}<0$ when $\ell>s$. With $\underline L$ and $\overline L$ as in \eqref{gag}, we prove that 
\begin{equation} \label{scor} \underline{L}=0<\overline L<\infty\ \text{if } 
\ell\in (0, s),\ \text{ whereas } 0<\underline L<\overline L=\infty\ 
\text{ if } \ell>s.\end{equation}  
Indeed, since $P^{(q)}(u_{MB})=0$ and $z^*<\infty$, Lemma~\ref{pos}(a) yields that  
\begin{equation} \label{vic} \limsup_{r\to 0^+} r|z'(r)|/z(r)<\infty \ \text{ and } F_0(z(r))-[rz'(r)]^2\to 0\ \text{ as }r\to 0^+,\end{equation} where $F_0$ is given by \eqref{defyz}. Hence, $\underline L=0$ and $\overline L<\infty$ if $\ell\in (0,s)$. Since $z_*=0$, for every $\rho\in (0,z^*)$, there exists a sequence $\{r_k\}$ of positive numbers decreasing to $0$ as $k\to \infty$ such that 
$\lim_{k\to \infty} z(r_k)=\rho$. Then, by \eqref{vic}, we have $\lim_{k\to \infty} (r_kz'(r_k))^2=F_0(\rho)$. For suitable $\rho$, using $r_k$ in \eqref{home}, we get that $\overline L>0$ for $\ell\in (0,s)$, and correspondingly $0<\underline L<\infty$ for $\ell>s$. 
It remains to show that $\overline L=\infty$ if $\ell>s$. Assuming the contrary, $R_k z'(R_k)/z(R_k)\to -\ell/C_{s,\ell}$ for every sequence $\{R_k\}$ of positive numbers decreasing to $0$ such that 
$\lim_{k\to \infty}z(R_k)= 0$. Lemma~\ref{pos}(a) gives that 
$ F_{R_k}(z(R_k))\geq [R_k z'(R_k)/z(R_k)]^2$. Letting $k\to \infty$,  we would have $(n-2)^2/4\geq \ell^2/C^2_{s,\ell}$, which is a contradiction with $s>0$. 
Thus, \eqref{scor} holds and $K$ satisfies the properties in Corollary~\ref{corol}. \qed
%\end{proof}

\subsection{Existence of (MB) solutions} \label{sec-52}

\begin{proposition} \label{mb} Let $q\in (1,2^\star-1)$. Assuming that $q> 2^\star-2$, then for $R>0$ small, \eqref{Eq0} admits at least a positive radial (MB) solution $u$, that is,  
	\begin{equation} \label{mbsol}
\liminf_{r\to 0^+} r^{\frac{n-2}{2}} u(r)=0\quad \text{and} \quad 
	\limsup_{r\to 0^+} r^{\frac{n-2}{2}} u(r)\in (0,\infty). 
	\end{equation}
\end{proposition}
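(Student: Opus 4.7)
The plan is to obtain a positive (MB) solution as a $\gamma\to\infty$ limit of the removable solutions $\{u_\gamma\}_{\gamma>0}$ produced by Lemma~\ref{remove}, in the spirit of Chen--Lin: as $\gamma\to\infty$, a single bubble $U_{\eta_\gamma}$ with $\eta_\gamma:=(c_{n,s}/\gamma)^{2/(n-2)}\to 0$ concentrates at the origin, and the residual left by the perturbation $-\mu u^q$ gives rise to the multi-bump structure in the limit. Identification of the limit will be carried out via the asymptotic Pohozaev integral $P^{(q)}$ introduced in Sect.~\ref{sec-thm2}.

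First, fix $\Lambda>\Lambda_0$ and let $R_\Lambda$ be as in \eqref{rlambda}; take $R\in(0,R_\Lambda)$. For each $\gamma>0$, the removable solution $u_\gamma$ is bounded at the origin, hence trivially satisfies \eqref{upli}; Proposition~\ref{Pr} then extends $u_\gamma$ to a positive radial solution on $(0,R_\Lambda]$ with $r^{(n-2)/2}u_\gamma(r)<\Lambda$. Choose $\gamma_k\to\infty$. The uniform bound $u_{\gamma_k}(r)\le\Lambda r^{-(n-2)/2}$ on compact subsets of $(0,R_\Lambda]$, combined with standard ODE/elliptic regularity, yields a subsequence (still denoted $u_{\gamma_k}$) converging in $C^2_{\mathrm{loc}}((0,R_\Lambda])$ to a nonnegative radial solution $u_\infty$ of \eqref{Eq0} with $r^{(n-2)/2}u_\infty(r)\le\Lambda$ on $(0,R_\Lambda]$.

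Assuming (as justified below) that $u_\infty$ is positive on $(0,R_\Lambda]$ with a non-removable singularity at $0$, I identify its type via Theorem~\ref{thm:1}. The a priori bound $r^{(n-2)/2}u_\infty\le\Lambda$ rules out the (ND) profile: for $q\in(\crits-1,\crit-1)$, the (ND) asymptotic $u(r)\sim\mu^{-1/(q-\crits+1)}r^{-s/(q-\crits+1)}$ gives $r^{(n-2)/2}u(r)\to\infty$, since a direct computation yields $s/(q-\crits+1)>(n-2)/2$ exactly when $q<\crit-1$; and for $q\le\crits-1$ the (ND) profile is not an option in Theorem~\ref{thm:1}. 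To exclude (CGS), I use $P^{(q)}(u_\gamma)=0$ (valid for every removable $u_\gamma$). Applying \eqref{trei} to $u_{\gamma_k}$ and letting $k\to\infty$, the boundary integral $P_r^{(q)}(u_{\gamma_k})\to P_r^{(q)}(u_\infty)$ by $C^2$-convergence on $\partial B(0,r)$, while the volume integral converges by dominated convergence (the majorant $\Lambda^{q+1}\xi^{n-1-(n-2)(q+1)/2}$ is integrable at $0$ precisely when $q<\crit-1$). Comparing the limit with \eqref{trei} applied to $u_\infty$ forces $P^{(q)}(u_\infty)=0$; by the strict-inequality statement accompanying \eqref{van}, $u_\infty$ cannot be (CGS). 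Therefore $u_\infty$ is of (MB) type, and in particular satisfies \eqref{mbsol}.

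The \emph{main obstacle} is verifying that $u_\infty$ is a positive singular solution. The removable-singularity case is excluded by Lemma~\ref{remove} combined with backwards continuous dependence of the ODE: if $u_\infty$ extended smoothly with $u_\infty(0)=\gamma_\infty<\infty$, then uniqueness gives $u_\infty=u_{\gamma_\infty}$, and the $C^2$-convergence $(u_{\gamma_k}(R_\Lambda),u_{\gamma_k}'(R_\Lambda))\to(u_\infty(R_\Lambda),u_\infty'(R_\Lambda))$ along the one-dimensional manifold of initial data at $r=R_\Lambda$ producing solutions bounded at the origin would force $\gamma_k\to\gamma_\infty<\infty$, a contradiction. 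The truly delicate point is the nontriviality $u_\infty\not\equiv 0$: this requires a concentration/compactness analysis showing that after the bubble concentration at scale $\eta_{\gamma_k}\to 0$, the perturbation $-\mu u^q$ leaves a nonzero residual at a macroscopic scale. It is here that the hypothesis $q>\crit-2$ enters in an essential way, consistently with the non-existence of (MB) solutions for $q\le\crit-2$ recorded in Theorem~\ref{thm:1}.
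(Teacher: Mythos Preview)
Your overall strategy matches the paper's exactly: take the removable solutions $u_{\gamma_k}$ with $\gamma_k\to\infty$, use the uniform a~priori bound of Proposition~\ref{Pr} to extract a limit $u_\infty$, rule out (ND) by the bound $r^{(n-2)/2}u_\infty\le\Lambda$, and rule out (CGS) by passing to the limit in the Pohozaev identity to get $P^{(q)}(u_\infty)=0$. That part is correct and essentially identical to the paper.

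The genuine gap is precisely what you yourself flag: you do not prove that $u_\infty$ is positive and singular. Your ``backwards continuous dependence'' argument for excluding the removable case is not rigorous as stated (injectivity of $\gamma\mapsto(u_\gamma(R_\Lambda),u_\gamma'(R_\Lambda))$ does not give properness), and you give no argument at all for $u_\infty\not\equiv 0$. In the paper this is not a side remark but the core of the proof: the authors establish a single Claim---for every $u_0>0$ there exist $r_0\in(0,R_\Lambda)$ and $i_0$ with $u_i(r_0)\ge u_0$ for all $i\ge i_0$---which simultaneously yields positivity and $\limsup_{r\to 0}u_\infty(r)=\infty$. The proof of this Claim occupies five steps and uses the Emden--Fowler transform $w_i(t)=r^{(n-2)/2}u_i(r)$, $t=\log r$: one shows (via the energy $E_i$ in \eqref{ei}) that $w_i'$ is uniformly bounded; assuming the Claim fails one traps $w_i$ below a small level $\varepsilon_0$ on a long interval $[t_i,\log r_0]$, derives sharp two-sided estimates on $\bar t_i-t_i$ and $\log r_0-\bar t_i$ by comparison with the explicit ODE $w''=f(w)$, and combines them into an inequality of the form $(q-2^\star+2)\log(1/r_0)\le C$. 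This is where $q>2^\star-2$ enters---it makes the coefficient positive so that taking $r_0$ small yields a contradiction. Without this analysis (or an equivalent concentration argument) the proof is incomplete.
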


\begin{proof}
	We use an argument inspired by Chen--Lin \cite{ChenLin}. 
	Let $(\gamma_i)_{i\geq 1}$ be an increasing sequence of positive numbers with $\lim_{i\to \infty} \gamma_i=\infty$. 
By Lemmas~\ref{remove} and \ref{pos}, for every 	
	$i\geq 1$, there exists $R_i>0$ such that \eqref{Eq0}, subject to $\lim_{|x|\to 0^+} u(x)=\gamma_i$, admits a unique positive radial $C^2(0,R_i]$-solution $u_{\gamma_i}$. From now on, we use $u_i$ instead of $u_{\gamma_i}$. Let $\Lambda>\Lambda_0$ be fixed, where $\Lambda_0$ is given by \eqref{PrEq1}. By Proposition~\ref{Pr}, there exists $R_\Lambda>0$ such that $u_i$ can be extended as a positive radial $C^2(0,R_\Lambda]\cap C[0,R_\Lambda]$-solution of \eqref{Eq0} in $(0,R_\Lambda]$ satisfying  \begin{equation} \label{upp}
	u_i(0)=\gamma_i,\quad
	r^{\frac{n-2}{2}} u_i(r)\leq \Lambda\ \text{for all } r\in (0,R_\Lambda]\ \text{and every } i\geq 1. 
	\end{equation}

\vspace{0.2cm}	
\noindent {\sc Claim: } For any $u_0>0$, there exist $r_0\in (0,R_\Lambda)$ and $i_0\geq 1$ such that 
	$$ u_i(r_0)\geq u_0\quad \text{for all } i\geq i_0.  
	$$    
	
	\noindent We now complete the proof of Proposition~\ref{mb} assuming the Claim. 
	From \eqref{upp}, there exists a subsequence of $(u_i)$, relabelled 
	$(u_i)$, converging uniformly to
	$u_{\infty}$ on any compact subset of 
	$(0,R_\Lambda]$. Moreover, $u_i\to u_\infty$ in $C^2_{\rm loc}(0,R_\Lambda]$ and $u_\infty$ is a radial solution of \eqref{Eq0}. The above Claim yields  $\limsup_{r\to 0^+} u_\infty(r)=\infty$, that is, $u_\infty$ has a non-removable singularity at $0$. By \eqref{upp}, we get   
	$\limsup_{r\to 0^+} r^{\frac{n-2}{2}}u_\infty(r)\in (0,\infty)$. 
	Since $q<2^\star-1$, we thus find that $u_\infty^{q+1}\in L^1(B(0,R_\Lambda))$.
	We have $P^{(q)}(u_i)=0$ for all $i\geq 1$. By letting $u=u_i$ in \eqref{trei} and \eqref{red}, then passing to the limit $i\to +\infty$, we find that 
	\begin{equation} \label{uui}  P_r^{(q)}(u_\infty)= c_{\mu,q,n} \int_{B(0,r)} u_\infty^{q+1}(x)\,dx
	\quad \text{for all } r\in (0,R_\Lambda]. 
	\end{equation}  By letting $r\to 0^+$ in \eqref{uui}, we find that $P^{(q)}(u_\infty)=0$. Hence by \eqref{van}, $u_\infty$ is not a (CGS) solution of \eqref{Eq0}. As $u_\infty$ does not have a removable singularity at $0$, we conclude that 
	$u_\infty$ is a radial (MB) solution of \eqref{Eq0}, that is $u_\infty$ satisfies \eqref{mbsol}. 
	This ends the proof of Proposition~\ref{mb}.	\qed \end{proof}
	
	\begin{proof}[Proof of the Claim] 
		Suppose the contrary. Then for some  $u_0>0$ and any $r_0\in (0, R_\Lambda)$, there exists a subsequence 
	of $u_i$, relabeled $(u_i)$, such that 
	\begin{equation} \label{mim} u_i(r_0)<u_0\quad \text{for all } i\geq 1. \end{equation}
	We apply the following transformation 	\begin{equation} \label{tr} w_i(t)=r^{\frac{n-2}{2}} u_i(r)\quad \text{with } t=\log r.\end{equation}
By $w_i'(t)$ and $w_i''(t)$, we denote the first and second derivative of $w_i$ with respect to $t$, respectively. 
Then $w_i$ satisfies the equation
	\begin{equation} \label{weq}
	w_i''(t)-f(w_i(t)) =\mu e^{\lambda t} w_i^q(t)\quad  
	\text{for } -\infty<t< \log R_\Lambda,
	\end{equation} 
	where $\lambda:=(n-2)(2^\star-1-q)/2$ and $f:[0,\infty)\to \R $ is defined by  
\begin{equation}\label{fdefi}
f(\xi):=(n-2)^2\xi/{4} -\xi^{2^\star(s)-1}\quad \text{for all } \xi\geq 0.
\end{equation}
	From \eqref{upp}, we have that 
	\begin{equation} \label{ww} w_i(t)\in (0,\Lambda]\quad \text{ for all } t\in (-\infty,\log R_\Lambda]\ \text{and } i\geq 1.\end{equation}

\noindent The proof of the Claim is now divided into five steps:

\begin{step} \label{step51}
The family  $(w_i'(t))_{i\geq 1}$ is uniformly bounded on $(-\infty, \log R_\Lambda]$. \end{step}

\begin{proof}[Proof of Step \ref{step51}] 
Using $F_R$ in \eqref{psir} with $R=e^t$, we define $E_i:(-\infty,\log R_\Lambda]\to \R$ by 
\begin{equation} \label{ei} E_i(t):=
%\left(w_i'(t)\right)^2-F_0(w_i(t))-2\mu\,e^{\lambda t} w_i^{q+1}(t)/(q+1) =
\left(w_i'(t)\right)^2-w_i^2(t)\,F_{e^t} (w_i(t)). 
\end{equation} 
We have $\lambda>0$ (since $q<2^\star-1$) and  
$\lim_{t\to -\infty} w_i(t)=0
$. 
By Lemma~\ref{pos}(a), we find that 
\begin{equation}\label{eii}  E_i(t)=-2 c_{\mu,q,n}\int_{-\infty}^t 
e^{\lambda \xi} w_i^{q+1}(\xi)\,d\xi\ \text{ and }\ 
 E_i'(t)=-2 c_{\mu,q,n}\,e^{\lambda t} w_i^{q+1}(t)<0 
\end{equation} 
for all 
$t\in (-\infty,\log R_\Lambda)$. 
It follows that 
\begin{equation} \label{limei} \lim_{t\to -\infty} w_i'(t)=\lim_{t\to -\infty} E_i(t)=0.\end{equation}
From \eqref{eii}, we have $E_i<0$ on $(-\infty,\log R_\Lambda]$. Thus, by \eqref{ww}, we get that 
$(w_i'(t))_{i\geq 1}$ is uniformly bounded for $t\in (-\infty,\log R_\Lambda]$, completing Step~\ref{step51}.  \qed \end{proof}

\begin{step} \label{step52} For $\varepsilon_0>0$ and $r_0\in (0,R_\Lambda)$ small such that $ r_0^{(n-2)/2} u_0<\varepsilon_0/2$, we set
$$ \mathcal F_i:=\left\{ t\in (-\infty, \log r_0):\ w_i(t)\geq \varepsilon_0\right\}\quad \text{for all } i\geq 1.$$
Then there exists $i_0\geq 1$ such that 
$$w_i(\log r_0)<\varepsilon_0/2\hbox{ and }\mathcal F_i\not=\emptyset \quad \text{for every } i\geq i_0.$$
\end{step}

\begin{proof}[Proof of Step~\ref{step52}]
For $0<\varepsilon_0<\left[(n-2)/2\right]
^{(n-2)/(2-s)}$, we define 
\begin{equation} \label{beta} \beta_0:=2 \varepsilon_0^{2^\star(s)-2}\left(n-2+
	\sqrt{(n-2)^2-4\varepsilon_0^{2^\star(s)-2}}\right)^{-1}  \hbox{ so that }0<\beta_0<\frac{n-2}{2}\hbox{ is small}.
\end{equation}    
Since $\beta_0\to 0^+$ as $\varepsilon_0\to 0^+$, we can take $\varepsilon_0>0$  small enough such that $\beta_0$ is smaller than $\max\{(n-2)/4, 2/q, (2-s)/(2^\star(s)-1)\}$. Our choice of $r_0$ and \eqref{mim} yield that 
\begin{equation} \label{r0} w_i(\log r_0)=r_0^{\frac{n-2}{2}} u_i(r_0)<r_0^{\frac{n-2}{2}} u_0<\varepsilon_0/2\quad 
\text{for all } i\geq 1. 
\end{equation}
To end Step~\ref{step52}, we show by contradiction that there exists $i_0\geq 1$ such that $\mathcal F_i\not=\emptyset$ for all
$i\geq i_0$. Indeed, suppose that for a subsequence $(w_{i_k})_{k\geq 1}$ of $(w_i)_{i\geq 1}$, we have \begin{equation} \label{gr} w_{i_k}(t)<\varepsilon_0\quad \text{ for all } 
t\in (-\infty, \log r_0]\ \text{ and every } k\geq 1. \end{equation} 
Let $k\geq 1$ be arbitrary.  
Using \eqref{gr} and \eqref{beta}, we infer that 
\begin{equation} \label{es} \beta_0\left(n-2-\beta_0\right)=\varepsilon_0^{2^\star(s)-2} > w_{i_k}^{2^\star(s)-2}(t)
\end{equation} for all $t\leq \log r_0$.  
From \eqref{weq} and \eqref{es}, we obtain that 
\begin{equation} \label{sec} w_{i_k}''(t)> \left[(n-2)/2-\beta_0\right]^2 w_{i_k}(t)\quad \text{for all } t\leq \log r_0.  
\end{equation} In particular, $t\longmapsto w_{i_k}'(t)$ is increasing on $(-\infty,\log r_0]$. Since $\lim_{t\to -\infty} w_{i_k}'(t)=0$, we find that 
$w_{i_k}'(t)>0$ for all $t\leq \log r_0$. Set
$$\mathcal G_{i_k}(t) :=(w_{i_k}'(t))^2-
\left[(n-2)/2-\beta_0\right]^2 w_{i_k}^2(t).$$
Using \eqref{sec}, we get that
$\mathcal G_{i_k}$ is increasing on  $(-\infty,\log r_0]$ and
$\lim_{t\to -\infty} \mathcal G_{i_k}(t)=0$.  
Thus, $\mathcal G_{i_k}>0$ on $(-\infty,\log r_0]$, which implies that   
$$w_{i_k}'(t)>\left[(n-2)/2-\beta_0\right] w_{i_k}(t)\quad \text{for all } t\leq \log r_0.$$ 
Thus, $t\longmapsto  e^{-\left(\frac{n-2}{2}-\beta_0\right)t}w_{i_k}(t)$ is increasing on 
$(-\infty, \log r_0]$. Using \eqref{tr} and \eqref{r0}, we find that
\begin{equation} \label{ah} u_{i_k}(r)\leq c_0\, r^{-\beta_0}\quad \text{for every }r\in (0,r_0]\ \text{ and all }k\geq 1,\end{equation} where 
$c_0:=\left(\varepsilon_0/2\right) r_0^{-\frac{n-2}{2}+\beta_0} $. Since $\beta_0$ can be made arbitrarily small, it follows from \eqref{ah} that 
the right-hand side of \eqref{Eq0} with $u=u_{i_k}$ is uniformly bounded in  $L^p(B(0,r_0))$ for some $p>n/2$.
Then,   
$u_{i_k}$ satisfies \eqref{Eq0} in $\mathcal D'(B(0,r_0))$ (in the sense of distributions) and $(u_{i_k})_{k\geq 1}$ is uniformly bounded in $W^{2,p}(B(0,r_0))$ for some $p>n/2$. 
 Hence, $(u_{i_k}(r))_{k\geq 1}$ is uniformly bounded in $ r\in [0,r_0/2]$, which leads to a contradiction with $u_{i_k}(0)=\gamma_{i_k}\to \infty$ as $k\to \infty$. This ends the proof of Step~\ref{step52}. \qed
 \end{proof}
 
\noindent For $i\geq i_0$, we define 
 $$t_i:=\sup\left\{ t\in (-\infty, \log r_0):\ w_i(t)\geq \varepsilon_0\right\}.$$
It follows from Step \ref{step52} that $t_i$ is well-defined and that $t_i\in 
(-\infty,\log r_0)$ for all $i\geq i_0$.

\begin{step} \label{step53}
We claim that for every $i\geq i_0$, the function $w_i$ is decreasing on $[t_i,\bar t_i]$ for some 
$\bar t_i\in (t_i,\log r_0]$. Moreover, by diminishing $\varepsilon_0>0$ and $r_0>0$, there exist positive constants $c_1,c_2$ independent of $\varepsilon_0$ and $i$ such that 
\begin{equation} \label{sa}
 w_i(\bar t_i)\geq c_1 \varepsilon_0^{\frac{q+2}{2}} e^{\frac{\lambda t_i}{2}}
\quad \text{and}\quad
\bar t_i-t_i\leq \frac{2}{n-2} \log \frac{ \varepsilon_0}{w_i(\bar t_i)}
+c_2.
\end{equation}
Moreover, if $\bar t_i<\log r_0$, then $w_i$ is increasing on $[\bar t_i,\log r_0]$ and 
\begin{equation} \label{sa2}
\log r_0-\bar t_i\leq \frac{2}{n-2} \log \frac{w_i(\log r_0)}
{w_i(\bar t_i)}
+c_3,
\end{equation}
where $c_3>0$ is a constant independent of $\varepsilon_0$ and $i$.\end{step}

\begin{proof}[Proof of Step~\ref{step53}] Let $i\geq i_0$ be arbitrary. By Step~\ref{step52}, we have $w_i(t)\leq \varepsilon_0$ for every $t\in [t_i,\log r_0]$. Since \eqref{es} holds for all $t\in [t_i,\log r_0]$, as in the proof of Step~\ref{step52}, we regain \eqref{sec} replacing $w_{i_k}$ by $w_i$ for all $t\in [t_i,\log r_0]$. Hence, $t\longmapsto w_i'(t)$ is increasing on $[t_i,\log r_0]$ since $ w_i''(t)>0$ for all $t\in [t_i,\log r_0]$. 
We next distinguish two cases:

\vspace{0.2cm}
\noindent {\sc Case 1:} $w_i'(t)\not=0$ for all $t\in [t_i,\log r_0)$. Hence,  
  $w_i'<0$ on $[t_i,\log r_0)$ using that $w_i<\varepsilon_0$ on $(t_i,\log r_0]$. 

\vspace{0.2cm}
\noindent {\sc Case 2:}	$w_i'(\bar t_i)=0$ for some $\bar t_i\in [t_i,\log r_0)$. Then, $w_i'<0$ on $[t_i,\bar t_i)$ and $w_i'>0$ on $(\bar t_i,\log r_0]$.

\vspace{0.2cm} 
\noindent In both cases, $w_i$ is decreasing on $[t_i,\bar t_i]$ such that  
\begin{enumerate} 
	\item $\bar t_i=\log r_0$ in Case~1;
	\item $\bar t_i\in (t_i,\log r_0)$ and $w_i'(t)>0$ for all $t\in (\bar t_i,\log r_0]$ in Case~2.
	\end{enumerate} 

\vspace{0.2cm}
\noindent Unless explicitly mentioned, the argument below applies for both Case~1 (when $\bar t_i=
\log r_0$) and Case~2 (when $\bar t_i\in (t_i,\log r_0)$).  

\vspace{0.2cm}
\noindent From \eqref{weq}, we have that 
\begin{equation} \label{vv} w_i''(t)\geq f(w_i(t))\quad \text{for all } t\in [t_i,\log r_0]. 
\end{equation}
Thus, using \eqref{vv}, we find that
\begin{equation} \label{m0} t\longmapsto 
\left(w_i'(t)\right)^2-
F_0(w_i(t))
\end{equation}
\begin{enumerate}
	\item[(a)] is non-increasing on $[t_i,\bar t_i]$ (in both Case 1 and Case 2);
	
	\item[(b)] is non-decreasing on $[\bar t_i,\log r_0]$ in Case 2.  
\end{enumerate}

\vspace{0.1cm} 
{\em Proof of the first inequality in \eqref{sa}.} 
By \eqref{r0} and $w_i(t_i)=\varepsilon_0$, we infer that there exists $\widetilde t_i\in (t_i,\log r_0)$ such that $w_i(\widetilde t_i)=\varepsilon_0/2$ and, moreover, $\bar t_i\in (\widetilde t_i,\log r_0]$. Hence, there exists $\xi_i\in [t_i,\widetilde t_i]$ such that 
$$-\varepsilon_0/2=w_i(\widetilde t_i)-w_i(t_i)=w_i'(\xi_i)(\widetilde t_i-t_i).$$ 
By Step~\ref{step51}, $(|w_i'(t)|)_{i\geq 1}$ is uniformly bounded on $(-\infty, \log R_\Lambda)$ so that   
\begin{equation} \label{new} \widetilde t_i-t_i \geq c \varepsilon_0\quad \text{for some constant } c>0.\end{equation} 
From \eqref{ww}, \eqref{ei} and \eqref{eii}, there exists $\widetilde c>0$ such that
\begin{equation} \label{u1}
-\widetilde c \,w_i^2(t)\leq E_i(t)
\leq  E_i(\widetilde t_i)\quad \text{for every } \widetilde t_i<t \leq \log r_0. 
\end{equation}  
Moreover, using \eqref{new}, together with $E_i(t_i)<0$ and $w_i\geq \varepsilon_0/2$ on $[t_i,\widetilde t_i]$, we obtain that  
\begin{equation} \label{u2}
E_i(\widetilde t_i)=E_i(t_i)-\frac{2\lambda\mu}{q+1} 
\int_{t_i}^{\widetilde t_i} e^{\lambda t} w_i^{q+1}(t)\,dt\leq - \frac{ \lambda \mu c\, \varepsilon_0^{q+2} e^{\lambda t_i}}{2^{q}\left(q+1\right) } .   
\end{equation}     
Since $\bar t_i\in (\widetilde t_i, \log r_0]$, by combining \eqref{u1} and \eqref{u2}, there exists $c_1>0$ such that  
\begin{equation} \label{bb} 
w_i(\bar t_i)\geq c_1 \varepsilon_0^{\frac{q+2}{2}} \,e^{\frac{\lambda t_i}{2}}, 
\end{equation} 
where $c_1>0$ is independent of $\varepsilon_0$ and $i$. \qed

\vspace{0.2cm}
{\em Proof of the second inequality in \eqref{sa}.} 
From \eqref{m0}, for all $ t\in [t_i, \bar t_i)$, we have 
\begin{equation} \label{bbb} [w_i'(t)]^2- F_0(w_i(t)) \geq -F_0(w_i(\bar t_i)), \end{equation}
which jointly with $w_i'(t)<0$ and $F_0$ increasing on $[0,\epsilon_0]$, yields that 
\begin{equation} \label{hh}
-w_i'(t)\,\left[ F_0(w_i(t))
		-F_0(w_i(\bar t_i))\right]^{-1/2}\geq 1 \ \text{for all } t\in [t_i,\bar t_i).
\end{equation}
Hence, for all 
$t\in [t_i,\bar t_i)$, by integrating \eqref{hh} over $[t,\bar t_i]$, we get that 
\begin{equation} \label{not1} 
\bar t_i-t\leq \int_{w_i(\bar t_i)}^{w_i(t)} \frac{d\eta}{\left[F_0(\eta)-
		F_0(w_i(\bar t_i))\right]^{1/2}} =:
	\mathcal D_i(t).	\end{equation}
We shall prove below that 
\begin{equation} \label{pp}
\mathcal D_i(t)\leq \frac{2}{n-2} \left(\log \frac{w_i(t)}{w_i(\bar t_i)} +\log 2\right)
+ \tilde k w_i^{2^\star(s)-2}(t)
\end{equation} for all $t\in [t_i,\bar t_i)$, where $\tilde k>0$ is a constant independent of $\varepsilon_0$ and $i$. Then, since $w_i\leq \varepsilon_0$ on $[t_i,\bar t_i]$, from \eqref{not1} an \eqref{pp}, we conclude the proof of the second inequality in \eqref{sa}.   

\vspace{0.2cm}
{\em Proof of \eqref{pp}.} For every $\xi\geq 0$, we define 
\begin{equation} \label{m1} 
  g_i(\xi):=
\left(\frac{n-2}{2}\right)^2 \xi^2-
\frac{2}{2^\star(s)}\,\xi^{2^\star(s)}\left[
w_i(\bar t_i)\right]^{2^\star(s)-2}.
\end{equation}	
By a change of variable, we find that 
	\begin{equation} \label{di}
	\mathcal D_i(t)=
	\int_{1}^{
		w_i(t)/w_i
		(\bar t_i)} \frac{d\xi}{\left[ g_i(\xi)- g_i(1)\right]^{1/2}}\quad \text{for all } t\in [t_i,\bar t_i).
\end{equation} By the definition of $g_i$ in \eqref{m1}, for each $\xi>1$, we have 
\begin{equation} \label{m2} \frac{g_i(\xi)-g_i(1)}{\xi^2-1}= 
\left(\frac{n-2}{2}\right)^2 -\frac{2[w_i(\bar t_i)]^{2^\star(s)-2}}{2^\star(s)}  \frac{\xi^{2^\star(s)} -1}{\xi^2-1}.
\end{equation}
Since   
$ \left(2^\star(s)-1\right) \xi^{ 2^\star(s)}-2^\star(s)\, \xi^{2^\star(s)-2}+1$  increases for $ \xi\geq 1$,
we get that 
$\xi^{2^\star(s)} -1$ is bounded from above by $ 2^\star(s)\, \xi^{2^\star(s)-2}(\xi^2-1)$ for all $\xi\geq 1$. 
Hence, for any 
$1<\xi\leq \varepsilon_0/w_i(\bar t_i)$, we find that 
\begin{equation}  \label{m4}  \frac{[w( \bar t_i)]^{2^\star(s)-2}}{2^\star(s)} \frac{\xi^{2^\star(s)} -1}{\xi^2-1}\leq 
\left[ w_i(\bar t_i)\,\xi\right]^{2^\star(s)-2}\leq \varepsilon_0^{2^\star(s)-2}. 
\end{equation}
Since we fix $\varepsilon_0>0$ small, there exists a positive constant $k$, independent of $\varepsilon_0$ and $i$, such that 
\begin{equation} \label{st}
\left[\frac{\xi^2-1}{g_i(\xi)-g_i(1)}\right]^{1/2}\leq \frac{2}{n-2}+2k \left[w_i(\bar t_i)\,\xi\right]^{2^\star(s)-2}  
\end{equation}
for every $ 1<\xi\leq \varepsilon_0/w_i(\bar t_i)$.
Since $w_i(t)\leq w_i(t_i)\leq \varepsilon_0 $ for each $t\in [t_i,\bar t_i)$, using \eqref{st} in \eqref{di}, we get 
\begin{equation} \label{mm}
\mathcal D_i(t)\leq \frac{2}{n-2}\int_1^{h_i(t)}\frac{d\xi}{\left[\xi^2-1\right]^{1/2}}+2k \left[w_i(\bar t_i)\right]^{2^\star(s)-2} \mathcal E_i(t),
\end{equation}
where for every $t\in [t_i,\bar t_i)$, we define $h_i(t)$ and $\mathcal E_i(t)$ by 
\begin{equation} \label{jj}  
h_i(t):=\frac{w_i(t)}{w_i(\bar t_i)}
\quad \text{and} \quad {\mathcal E}_i(t):=\int_{1}^{h_i(t)}  \frac{\xi^{2^\star(s)-2}}
{\left(\xi^2-1\right)^{1/2}}\,d\xi.
\end{equation}
A simple calculation gives that there exists $C>0$ such that for every $t\in [t_i,\bar t_i) $, we have
\begin{equation} \label{int1}
  {\mathcal E}_i(t)\leq C
h_i^{2^\star(s)-2}(t).
\end{equation}
Using \eqref{jj} and \eqref{int1} into \eqref{mm}, we reach \eqref{pp} with $\tilde k$ large enough.
This completes the proof of the inequalities in \eqref{sa}. \qed

\vspace{0.2cm} {\em Proof of \eqref{sa2} in Case 2 (when $\bar t_i\in (t_i, \log r_0)$).} 
Recall that $w_i$ is increasing on $[\bar t_i, \log r_0]$ so that using \eqref{r0}, we get that $w_i(t)\leq w_i(\log r_0)<\varepsilon_0/2\quad \text{for all } t\in [\bar t_i, \log r_0].$ Moreover, 
the function in 
\eqref{m0} is non-decreasing on $[\bar t_i, \log r_0]$. Hence, we recover \eqref{bbb} for all $ t\in (\bar t_i, \log r_0]$. Since this time $w_i'>0$ on $(\bar t_i, \log r_0]$, instead of \eqref{not1}, we find that 
\begin{equation} \label{off} w_i'(t) \left[
	F_0(w_i(t))-F_0(w_i(\bar t_i))\right]^{-1/2}\geq 1\quad
\text{for every } t\in (\bar t_i,\log r_0]. 
\end{equation}
Using $\mathcal D_i(t)$ given by \eqref{not1}, we see that by integrating \eqref{off} over $[\bar t_i,t]$, we obtain that 
\begin{equation} \label{com}
t-\bar t_i\leq \mathcal D_i(t)\quad \text{for all } t\in (\bar t_i, \log r_0].
\end{equation}
Similar to the case $t\in [t_i,\bar t_i)$, we can prove 
\eqref{pp} for all $t\in (\bar t_i,\log r_0]$, which jointly with  \eqref{com}, gives the existence of a constant $c_3>0$ independent of $\varepsilon_0$ and $i$ such that 
\begin{equation} \label{ccc}
t-\bar t_i\leq \frac{2}{n-2}\log \frac{w_i(t)}{w_i(\bar t_i)}+c_3\quad
\text{for all } t\in (\bar t_i, \log r_0].
\end{equation} 
By letting $t=\log r_0$ in \eqref{ccc}, we conclude \eqref{sa2}. This proves the assertions of Step~\ref{step53}. \qed \end{proof}

 \begin{step} \label{step54}
 Proof of the Claim concluded in Case 1 of Step~\ref{step53}: $\bar t_i=\log r_0$. 
 \end{step}
 
 \begin{proof}[Proof of Step~\ref{step54}]
\noindent  Suppose that $w_i'<0$ on $ [t_i,\log r_0)$.

\vspace{0.2cm}
\noindent 
The second inequality in \eqref{sa} of Step~\ref{step53} reads as follows
\begin{equation} \label{ou}
\log r_0-t_i\leq \frac{2}{n-2}\log \frac{\varepsilon_0}{w_i(\log r_0)}+c_2.
\end{equation}  
The first inequality in \eqref{sa} and \eqref{r0} give that $ r_0^{\frac{n-2}{2}}u_0\geq c_1 \varepsilon_0^{\frac{q+2}{2}} \,e^{\frac{\lambda t_i}{2}}$. By applying $\log$ to this inequality  and to \eqref{bb} (with $\bar t_i=\log r_0$), respectively, we find that
\begin{equation} \label{ou1} \lambda t_i/2\leq 
[(n-2)/2] \log r_0+c_4
\left(\log u_0+\log (1/\varepsilon_0) \right) 
\end{equation}
for some constant $c_4>0$ independent of $\varepsilon_0$ and $i$, 
respectively 
\begin{equation} \label{ou2} \log (w_i(\log r_0)) \geq \lambda t_i/2+[(q+2)/2] \log \varepsilon_0 +\log c_1.
 \end{equation}
 Using \eqref{ou2} into \eqref{ou}, we deduce that 
\begin{equation} \label{ou3} \log r_0\leq \left[1-\lambda/(n-2)
 \right] t_i+c_5\log (1/\varepsilon_0) 
 \end{equation} for a constant
 $c_5>0$ independent of $\varepsilon_0$ and $i$. We have  
 \begin{equation} \label{theta} \Theta:= 2(q-2^\star+2)/(2^\star-1-q)>0\quad \text{since } q\in (2^\star-2,2^\star-1).\end{equation}
 Plugging into \eqref{ou3} the estimate on $t_i$ from \eqref{ou1}, we conclude that
 \begin{equation}\label{con} -\Theta \log r_0\leq c_6 \left[\log u_0 +\log (1/\varepsilon_0)\right],  
 \end{equation} where $c_6$ is a positive constant independent of $\varepsilon_0$ and $i$.   
 Since $\Theta>0$, we can choose $r_0>0$ small so that 
 the left-hand side of \eqref{con} is bigger than twice the right-hand side of \eqref{con}, 
which is a contradiction with \eqref{con}. This completes  Step~\ref{step54}. \qed \end{proof}

 \begin{step} \label{step55} 
 Proof of the Claim in Case~2 of Step~\ref{step53}: $\bar t_i\in (t_i,\log r_0)$. \end{step} 
  
  \begin{proof}[Proof of Step~\ref{step55}]  We have $w_i'<0$ on $[t_i,\bar t_i)$ and $w_i'>0$ on $(\bar t_i, \log r_0]$. 
 The first inequality of \eqref{sa} yields
   \begin{equation} \label{eee}
   2\log w_i(\bar t_i)\geq (q+2) \log \varepsilon_0+\lambda t_i+2\log c_1.
   \end{equation}
   By adding the second inequality of \eqref{sa} to that of \eqref{sa2}, we get
    \begin{equation} \label{aaa}
 \log r_0-t_i\leq \frac{2}{n-2} \left[\log 
 \varepsilon_0 +\log w_i(\log r_0)-2\log 
w_i(\bar t_i)
   \right]+ C_1,
      \end{equation} 
   where $C_1>0$ is a constant independent of $\varepsilon_0$ and $i$. 
 By \eqref{r0}, we have 
  \begin{equation} \label{abc}
 \log w_i(\log r_0)\leq \log u_0+[(n-2)/2]\,\log r_0.
   \end{equation}
 Using \eqref{eee} and \eqref{abc} into \eqref{aaa}, we obtain that 
\begin{equation} \label{cdcd}
  \left[ 2\lambda/(n-2)-1\right] t_i\leq C_2 \left[\log (1/\varepsilon_0)+\log u_0\right], 
\end{equation}
  where $C_2>0$ is a a constant independent of $\varepsilon_0$ and $i$. Since the coefficient of $t_i$ in \eqref{cdcd} equals $2^\star-2-q$, which is negative from the assumption $q>2^\star-2$, using that $t_i<\log r_0$, we infer that 
  \begin{equation} \label{ggg}
  \left(2^\star-2-q\right) \log r_0\leq C_2 \left[\log (1/\varepsilon_0)+\log u_0\right]. 
    \end{equation} 
  By choosing $r_0>0$ small so that the left-hand side of \eqref{ggg} is greater than twice the right-hand side of \eqref{ggg}, 
 we reach a contradiction. This proves Step~\ref{step55}. \qed
 \end{proof}

\noindent From Steps~\ref{step54} and \ref{step55} above, we conclude the proof of the Claim. \qed \end{proof}

\section{(CGS) solutions}\label{Sec5}

This section is devoted to the proof of part (iii) of Theorem~\ref{Th1}, restated below. 

\begin{proposition} \label{CGS} Let $q\in (1,\crit-1)$. There exists $R_0>0$ such that for every $R\in (0,R_0)$ and 
any positive singular solution $U$ of \eqref{rnsol}, there exists a unique positive radial (CGS) solution $u$ of \eqref{Eq0} with asymptotic profile $U$ near zero.
	\end{proposition}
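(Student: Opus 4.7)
The plan is to use the Emden--Fowler framework already set up in Section~\ref{Sec4} to realize (CGS) solutions of \eqref{Eq0} as exponentially small perturbations of positive periodic solutions of the autonomous limit ODE, and to produce them by a contraction argument in an exponentially weighted space on a left half-line.

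Via the change of variable $w(t)=r^{(n-2)/2}u(r)$ with $t=\log r$, the radial form of \eqref{Eq0} becomes
$$w''(t)=f(w(t))+\mu e^{\lambda t}w(t)^q,\qquad t\le T:=\log R,$$
with $f$ as in \eqref{fdefi} and $\lambda>0$ as in \eqref{lamb}. Positive periodic solutions $v$ of the limit equation $v''=f(v)$ are precisely the Fowler transforms of the positive singular solutions $U$ of \eqref{rnsol}, and a positive radial (CGS) solution of \eqref{Eq0} with asymptotic profile $U$ corresponds to a solution $w$ of the perturbed ODE on $(-\infty,T]$ with $w(t)-v(t)\to 0$ as $t\to-\infty$. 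Fixing such a $v$ and writing $w=v+\varphi$, I would rewrite the equation for $\varphi$ as
$$L\varphi:=\varphi''-f'(v)\varphi=\mathcal{N}(\varphi)+\mu e^{\lambda t}(v+\varphi)^q,$$
with $\mathcal{N}(\varphi)=f(v+\varphi)-f(v)-f'(v)\varphi=O(\varphi^2)$. Differentiating the limit equation shows $v'$ is a bounded solution of $L\psi=0$; since the limit dynamics is Hamiltonian and time-translation symmetric, the Floquet monodromy of this Hill equation is parabolic (both multipliers equal to $1$, arranged in a Jordan block because the period depends non-trivially on the energy), yielding a second independent solution $\psi_2$ with at-most-linear growth $|\psi_2(t)|\le C(1+|t|)$. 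Variation of parameters then turns the problem into the integral equation
$$\varphi(t)=\int_{-\infty}^{t}G(t,s)\bigl[\mathcal{N}(\varphi(s))+\mu e^{\lambda s}(v(s)+\varphi(s))^q\bigr]\,ds,$$
with $G(t,s)=v'(s)\psi_2(t)-\psi_2(s)v'(t)$, normalized so that the Wronskian equals $1$.

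I would then run a Banach fixed point argument on the small ball $\{\|\varphi\|_T\le M\}$ in
$$X_T:=\Bigl\{\varphi\in C((-\infty,T]):\|\varphi\|_T:=\sup_{t\le T}e^{-\alpha t}|\varphi(t)|<\infty\Bigr\},$$
for a fixed $\alpha\in(0,\lambda)$. Since $v$ is positive, bounded, and bounded away from $0$, the forcing at $\varphi=0$ has $X_T$-norm of order $e^{(\lambda-\alpha)T}$, arbitrarily small for $T$ very negative; the kernel bound $|G(t,s)|\le C(1+|t-s|)$ together with the strict gap $\lambda-\alpha>0$ ensures the convolution maps $X_T$ into itself with a small constant, and the quadratic nature of $\mathcal{N}$ together with the $C^1$ dependence of $(v+\varphi)^q$ on $\varphi$ supplies a Lipschitz constant strictly less than $1$ for $R$ small. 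The unique fixed point produces a unique $\varphi$, and positivity $w=v+\varphi>0$ on $(-\infty,T]$ follows from $\min v>0$ and $|\varphi(t)|\le Me^{\alpha t}\to 0$ as $t\to-\infty$.

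Uniqueness of the (CGS) solution with asymptotic profile $U$ reduces to uniqueness of $\varphi$ in $X_T$: any other such solution gives $\varphi'=w'-v\to 0$, hence lies in the contraction ball on a sufficiently negative tail, so the fixed-point uniqueness forces $\varphi'=\varphi$ on $(-\infty,T]$, and ODE uniqueness extends the equality to the full common domain. The main obstacle is the degeneracy of $L$: its kernel on bounded functions contains $v'$, so $L$ is not invertible between standard sup-norm spaces. The rescue is that the inhomogeneity decays like $e^{\lambda t}$, which beats the at-most-linear growth of the Green kernel inherited from the parabolic Floquet monodromy (area preservation of the period map precludes exponential instability); the weight $e^{-\alpha t}$ with $0<\alpha<\lambda$ converts this heuristic into a genuine Banach-space bijection and delivers the decisive smallness factor $e^{(\lambda-\alpha)T}$ as $T\to-\infty$.
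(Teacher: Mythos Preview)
Your existence argument is essentially correct and runs parallel to the paper's: both linearize around the periodic orbit and close by a contraction in an exponentially weighted space. The paper makes the second solution $\psi_2$ explicit as $\partial_\sigma\varphi_{\sigma,\tau}$ (differentiation in the energy level), which is exactly your ``period varies with energy'' observation; the transformation \eqref{Th1CGSEq5} is just the change of basis $(v',\partial_\sigma v)$ in the phase plane, and the resulting fixed-point map $\Phi_{\sigma,\tau}$ is your variation-of-parameters operator in disguise.

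There are, however, two genuine gaps.

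\emph{The constant orbit.} Your scheme uses $v'$ as one fundamental solution of $L\psi=0$. For the constant periodic solution $v\equiv M_0$ (the center of the phase portrait, corresponding to $\sigma=\overline\sigma$) one has $v'\equiv 0$, so the basis degenerates and your parabolic-monodromy picture collapses; indeed $\partial_\sigma\varphi_{\sigma,\tau}$ blows up as $\sigma\to\overline\sigma$. The paper treats this regime separately (Step~\ref{step622}): near $M_0$ one has $f'(M_0)<0$, the linearization has a pair of purely imaginary eigenvalues, $\Vert e^{tA}\Vert$ is uniformly bounded, and the contraction goes through with the extra linear term $L(\varphi_{\sigma,\tau},\varphi_{\overline\sigma})\tilde V$ controlled by \eqref{lim:H}. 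You need an analogous bifurcation of cases.

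\emph{Uniqueness.} Your reduction ``any other such solution lies in the contraction ball on a sufficiently negative tail'' is not justified: the hypothesis on a competing (CGS) solution is only $\tilde\varphi(t)\to 0$ as $t\to-\infty$, whereas membership in $X_T$ requires the quantitative bound $|\tilde\varphi(t)|\le M e^{\alpha t}$. Since the Floquet monodromy is merely parabolic, the homogeneous problem has no exponential dichotomy to force this decay a priori, and the quadratic remainder $\mathcal N(\tilde\varphi)=O(\tilde\varphi^2)$ gives you nothing integrable against the linearly growing kernel without that weight. The paper does \emph{not} close uniqueness inside the fixed-point space; instead it builds the two continuous, injective maps $G(\sigma,\tau,r)=(\varphi_{\sigma,\tau},\varphi'_{\sigma,\tau},r)$ and $H(\sigma,\tau,r)=(V_{\sigma,\tau},W_{\sigma,\tau},r)$, extends $H\circ G^{-1}$ across $r=0$ by reflection, and invokes the Domain Invariance Theorem to show the image of $H$ is open. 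Any solution of \eqref{Th1CGSEq2} satisfying \eqref{til} then has its trajectory trapped in $H(\Omega_0)$ for small $r$, which forces it to coincide with some $(V_{\sigma',\tau'},W_{\sigma',\tau'})$, and ODE uniqueness finishes. If you want to salvage your route, you must first prove an a priori exponential decay estimate for any $\varphi$ with $\varphi\to 0$; this is not免费.
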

	
\begin{proof} 
Let $f$ be given by \eqref{fdefi}. Let $U$ be a positive singular solution of \eqref{rnsol}. Then, by defining 
$\varphi(t)=e^{-(n-2)t/2}U(e^{-t})$ for $t\in \R$, we see that $\varphi\in C^\infty\(\R\)$ is a positive  
  periodic solution of 
 \begin{equation}\label{Th1CGSEq1}
 \varphi''(t)=f(\varphi(t))\quad \text{ for all } t\in \R.\end{equation} 
Let $\mathcal P$ denote the set of all positive smooth periodic solutions of  \eqref{Th1CGSEq1} to be described in Sect.~\ref{cgs61}.  
We next show that Proposition~\ref{CGS} is equivalent to Lemma~\ref{lem62}, the proof of which will be given in Sect.~\ref{cgs62}. 

\begin{lemma} \label{lem62} Let $q\in (1,\crit-1)$. For every $\varphi\in\mathcal P$, there exists $T_0=T_0(\varphi)>0$ large for which 
	the non-autonomous first order system 
	\begin{equation}\label{Th1CGSEq2}
	\left\{ \begin{aligned} 
	& (V',W')=(W,f(V)+\mu e^{-\lambda t} V^q)\qquad\text{in }[T_0,\infty),\\
	&  V>0\ \text{on } [T_0,\infty),\\
	\end{aligned} \right. \end{equation}
	has a unique solution satisfying 
	\begin{equation} \label{til}
	\( V(t),W(t)\)-\(\varphi(t),\varphi'(t)\)\to (0,0)\quad \text{as } t\to \infty.
	\end{equation}
\end{lemma}
\noindent Indeed, assuming that Proposition~\ref{CGS} holds, then for every $\varphi\in \mathcal P$, we use the transformation
\begin{equation} \label{vite} 
\varphi(t)=r^{\frac{n-2}{2}}U(r), \quad 
V(t)=r^{\frac{n-2}{2}} u(r),\quad W(t)=V'(t) \quad \text{with } t=\log  (1/r),\end{equation}
where $u$ is the unique positive radial (CGS) solution of \eqref{Eq0} satisfying $\lim_{r\to 0^+} u(r)/U(r)=1$. Hence, 
we obtain that $(V,W)$ is a solution of \eqref{Th1CGSEq2} for any $T_0>\log  \(1/R\)$ and, moreover, 
$V\(t\)-\varphi\(t\)\to0$ as $ t\to \infty$. 
Using \eqref{Th1CGSEq1}, we find that $W^\prime(t)-\varphi^{\prime\prime}(t)\to 0$ as $t\to \infty$.
Hence, $W-\varphi'$ is uniformly continuous on $[T_0,+\infty)$. Since $\lim_{t\to \infty} (V-\varphi)(t)= 0$, we get that $W\(t\)-\varphi'\(t\)\to 0$ as $t\to +\infty$. This proves Lemma~\ref{lem62}. 
We prove the reverse implication. If Lemma~\ref{lem62} holds, 
then for every positive singular solution $U$ of \eqref{rnsol}, by using 
\eqref{vite} and Proposition \ref{Pr}, we 
get a unique positive radial (CGS) solution $u$ of \eqref{Eq0} satisfying 
$\lim_{r\to 0^+} u(r)/U(r)=1$.

 \subsection{Description of $\mathcal P$} \label{cgs61} We show that the set $\mathcal P$ of all positive smooth periodic solutions of  \eqref{Th1CGSEq1} is given by \eqref{pers}. 
This is basically standard ODE theory. We state only the essential steps and leave the details to the reader. The function $F_0$ in \eqref{defyz} is increasing on $[0,M_0]$ and decreasing on $[M_0,\infty)$ with $M_0$ given by \eqref{mino}. 
 	Thus $F_0$ reaches its maximum $\overline \sigma$ at $M_0$, where 
 	\begin{equation} \label{mino}M_0:=\(\frac{n-2}{2}\)^{\frac{n-2}{2-s}} \quad \text{and} \quad \overline\sigma:=F_0(M_0)=\frac{2-s}{n-s}\(\frac{n-2}{2}\)^{\frac{2\(n-2\)}{2-s}}.\end{equation} 
 	Note that $M_0$ is the only positive zero of $f(\xi)=0$. Let $\varphi\in \mathcal P$. Since $F_0(\xi)=2\int_0^\xi f(t)\,dt$ for all $\xi\ge 0$, from \eqref{Th1CGSEq1}, there exists a constant $\sigma>0$ such that 
 	\begin{equation}  \label{pohs} \left(\varphi'(t)\right)^2=F_0(\varphi(t))- \sigma \quad \text{for all }t\in \R. 
 	\end{equation} In fact, by taking $\mu=0$ in \eqref{red} for $u=U$ with $U$ given by \eqref{vite}, we precisely obtain that 
 	$\sigma=2P(U)/\omega_{n-1}>0$, where $P(U)$ is the Pohozaev invariant associated to the positive singular solution $U$ of \eqref{rnsol}. 
 	From \eqref{mino} and \eqref{pohs}, we must have 
 	$$ 0<\sigma\leq {\overline \sigma}\quad {\rm and }\quad \varphi \equiv M_0 \  \text{ on } \R\ \text{ if } \sigma=\overline \sigma. $$
 	Let $\sigma\in (0,\overline \sigma)$ be fixed. Let $a_\sigma$ and $b_\sigma$ denote the two positive solutions of $F_0(\xi)=\sigma$ with $0<a_\sigma<M_0<b_\sigma$. It follows from standard analysis of the ODE \eqref{Th1CGSEq1} that for any $\sigma\in (0,\bar{\sigma})$, there is a unique solution $\varphi_\sigma$ to \eqref{Th1CGSEq1} such that $\min_\rr\varphi_\sigma=\varphi_\sigma(0)=a_\sigma<b_\sigma=
 	\max_{\rr}\varphi_\sigma$.  
 	Moreover, $\varphi_\sigma$ is periodic and we let $2t_\sigma>0$ be its principal period.

 	For every $\tau\in \mathbb S^1$, let $\varphi_{\sigma,\tau}$ denote the function whose graph is obtained from that of $\varphi_\sigma$ 
 	by a horizontal shift with  $(t_\sigma/\pi) \text{Arg }\,\tau $ units, where $\text{Arg} \,\tau$ denotes the principal argument of $\tau$. 
 	Note that $\varphi_\sigma=\varphi_{\sigma,\tau_0}$ with $\tau_0=(1,0)\in \mathbb S^1$. 
 	It follows that 
 	\begin{equation} \label{pers}  \mathcal P=\{\varphi_{\overline\sigma}\}\cup 
 	\{ \varphi_{\sigma,\tau}\}_{(\sigma,\tau)\in (0,\overline \sigma)\times \mathbb S^1},
 	\end{equation} 
 	where $\varphi_{\overline\sigma}\equiv M_0$ and 
 	$\varphi_{\sigma,\tau}(t)=\varphi_\sigma\left(t-(t_\sigma/\pi)
 	\text{Arg}\, \tau\right)$ for all $ t\in \R$.

\subsection{Proof of Lemma~\ref{lem62}} \label{cgs62}

We first prove Lemma~\ref{lem62} for $\varphi\in \cup \{\varphi_{\sigma,\tau}\}_{(\sigma,\tau) \in (0,\sigma_0)\times \mathbb S^1}$ with $\sigma_0\in (0,\overline\sigma)$ and second for $\varphi\in \{\varphi_{\overline \sigma}\}\cup \{ \varphi_{\sigma,\tau}\}_{(\sigma,\tau)\in [\sigma_0,\overline \sigma )\times \mathbb S^1}$ with $\sigma_0\in (0,\overline \sigma)$ close enough to $\overline \sigma$. 

\begin{step} \label{step621}For any $\sigma_*\in (0,\sigma_0)$ fixed, 
there exists $T_0>0$ large such that 
for every $\varphi=\varphi_{\sigma,\tau}$ with $\(\sigma,\tau\)\in (\sigma_*,\sigma_0)\times\mathbb S^1$, the system \eqref{Th1CGSEq2}, subject to 
\eqref{til}, admits a unique solution $(V_{\sigma,\tau},W_{\sigma,\tau})$. 
\end{step}

\begin{proof}[Proof of Step~\ref{step621}]
For the existence proof, we make a suitable transformation and use the Fixed Point Theorem for a contraction mapping. 
Let $I_0$ be an open interval such that $\(\sigma_*,\sigma_0\)\Subset I_0\Subset\(0,\overline{\sigma}\)$.  The key here is that for every $\(\sigma,\tau\)\in I_0\times\mathbb S^1$, both $\varphi_{\sigma,\tau}$ and $\partial_t\varphi_{\sigma,\tau}=\varphi'_{\sigma,\tau}$ are differentiable with respect to $\sigma$.  This does not hold for $\sigma=\overline\sigma$. By differentiating \eqref{pohs} with respect to $\sigma$ and using \eqref{Th1CGSEq1}, we get
$$  f(\varphi_{\sigma,\tau}(t))  \frac{d\[\varphi_{\sigma,\tau}(t)\]}{d\sigma}-\partial_t\varphi_{\sigma,\tau} (t) \frac{d \[\partial_t\varphi_{\sigma,\tau}(t)\]}{d\sigma}=\frac{1}{2}\quad 
\text{for all } t\in \R.
$$
We see that there exists $C_*>0$ such that for every $\(\sigma,\tau\)\in I_0\times\mathbb S^1$, we have
\begin{equation} \label{dstar} | \partial_t\varphi_{\sigma,\tau}(t)|+\left|   \frac{d\[\varphi_{\sigma,\tau}(t)\]}{d\sigma}\right| \leq C_*\quad \text{for all } t\in \R.
\end{equation}
Moreover, there exists $T_0>0$ such that $C_*  e^{-\lambda T_0/2}<a_0:=\inf\left\{a_\sigma:\,\sigma\in I_0\right\}$, where $a_\sigma$ is the smallest positive root of $F_0(\xi)=\sigma$. 
Let $\mathcal X_{T_0}$ denote the set of all continuous functions 
$(f_1,f_2):[T_0,\infty)\to \R^2$ with
$ e^{\lambda t/2} (|f_1(t)|+|f_2(t)|)\leq 1$ for all $ t\geq T_0$.
If we define $$\|(f_1,f_2)\|:=\sup_{t\geq T_0}\left\{ e^{\lambda t/2} \(|f_1(t)|+|f_2(t)|\)\right\},$$ then $(\mathcal X_{T_0},\|\cdot\|)$ is a complete metric space. 
For $(\widehat V,\widehat W)\in \mathcal X_{T_0}$ and recalling that $\varphi_{\sigma,\tau}''=f(\varphi_{\sigma,\tau})$ on $\R$, we consider the following transformation:
\begin{equation} \label{Th1CGSEq5}
\begin{bmatrix} 
	V(t)-\varphi_{\sigma,\tau}(t) \\ W(t)-\varphi_{\sigma,\tau}'(t)
	\end{bmatrix} =\begin{bmatrix} \partial_t\varphi_{\sigma,\tau} (t)& \frac{d\[\varphi_{\sigma,\tau}(t)\]}{d\sigma}\\ 
	f(\varphi_{\sigma,\tau}(t)) &  \frac{d \[\partial_t\varphi_{\sigma,\tau}(t)\]}{d\sigma}
		\end{bmatrix}
	\begin{bmatrix} \widehat V(t)\\ \widehat W(t)
	\end{bmatrix}\quad \text{for } t\in [T_0,\infty).
\end{equation}
Note that the matrix and its inverse are both uniformly bounded with respect to $\(\sigma,\tau\)\in I_0\times\mathbb S^1$. In particular, \eqref{Th1CGSEq5} yields that
\begin{equation} \label{vii} V(t):= \varphi_{\sigma,\tau}(t)+\widehat V (t)\partial_t\varphi_{\sigma,\tau}(t)+\widehat W(t) \frac{d\[\varphi_{\sigma,\tau}(t)\]}{d\sigma}.
\end{equation}
From \eqref{dstar}, \eqref{vii} and our choice of $T_0$, we find that
\begin{equation} \label{inn}  |V(t)-\varphi_{\sigma,\tau}(t)|\leq C_*  e^{-\lambda t/2} 
\| (\widehat V,\widehat W)\|
 \leq C_* e^{-\lambda T_0/2} <a_0\quad \text{for all } t\geq T_0.
\end{equation} 
For every $t\geq T_0$ and $\(\sigma,\tau\)\in I_0\times\mathbb S^1$, we have 
$\varphi_{\sigma,\tau}(t)\geq a_{\sigma}\geq a_0$ since $a_{\sigma}$ is increasing in $\sigma$.
Thus, \eqref{inn} proves that $V$ in \eqref{vii} is positive on $[T_0,\infty)$ for all $(\widehat V,\widehat W)\in \mathcal X_{T_0}$. 
For simplicity of reference, using $V$ in \eqref{vii} for $(\widehat V,\widehat W)\in \mathcal X_{T_0}$, we define 
$$ g_{\sigma,\tau,\widehat V,\widehat W}(t):=f(V(t))-f(\varphi_{\sigma,\tau}(t)) -(V(t)-\varphi_{\sigma,\tau}(t)) f'(\varphi_{\sigma,\tau}(t))+\mu e^{-\lambda t} V^q(t).
$$
By \eqref{inn}, there exist positive constants $C_0$ and $C_1$ such that for all
 $(\widehat V,\widehat W)\in \mathcal X_{T_0}$, 
 \begin{equation} \label{gb} |g_{\sigma,\tau,\widehat V,\widehat W}(t)|\leq C_0   |V(t)-\varphi_{\sigma,\tau}(t)|^2+\mu e^{-\lambda t} V^q(t)\leq C_1 e^{-\lambda t}
 \end{equation} for every $t\in [T_0,\infty)$ and $\(\sigma,\tau\)\in I_0\times\mathbb S^1$.
Remark that \eqref{Th1CGSEq2} is equivalent to the system
\begin{equation} \label{Th1CGSEq6}
( \widehat V'(t),\widehat W'(t))= \left( 2g_{\sigma,\tau,\widehat V,\widehat W}(t)  \frac{d\[\varphi_{\sigma,\tau}(t)\]}{d\sigma},- 2 g_{\sigma,\tau,\widehat V,\widehat W}(t)  \,\partial_t\varphi_{\sigma,\tau}(t) \right) \  \text{on } [T_0,\infty) .
\end{equation}
For every $(\widehat V,\widehat W)\in \mathcal X_{T_0}$ and $t\geq T_0$, we define  
$$  \Phi_{\sigma,\tau}(\widehat V,\widehat W)(t)=
\left(-2\int_t^\infty g_{\sigma,\tau,\widehat V,\widehat W}(y) \frac{d\[\varphi_{\sigma,\tau}(y)\]}{d\sigma}\,dy,
2\int_t^\infty g_{\sigma,\tau,\widehat V,\widehat W}(y) \,\partial_t\varphi_{\sigma,\tau}(y)\,dy\right). 
$$
We next prove the existence of $T_0>0$ large such that  
$\Phi_{\sigma,\tau}$ maps $\mathcal X_{T_0}$ into $\mathcal X_{T_0}$ and $\Phi_{\sigma,\tau}$ is a contraction mapping on $\mathcal X_{T_0}$ for every $\(\sigma,\tau\)\in I_0\times\mathbb S^1$. 
From \eqref{dstar}, \eqref{gb} and the definition of $(\mathcal X_{T_0},\|\cdot\|)$, we have
\begin{equation} \label{phm}
\|\Phi_{\sigma,\tau} (\widehat V,\widehat W)\|
 \leq 2C_* \sup_{t\geq T_0} \left\{ e^{\lambda t/2} \int_t^\infty |g_{\sigma,\tau,\widehat V,\widehat W}(y)|\,dy\right\}\leq \frac{2 C_* C_1}{\lambda} e^{-\lambda T_0/2}.
\end{equation}
Thus, for large $T_0>0$, we find that $\Phi_{\sigma,\tau} (\widehat V,\widehat W)\in \mathcal X_{T_0}$ for every $(\widehat V,\widehat W)\in \mathcal X_{T_0}$ and all $\(\sigma,\tau\)\in I_0\times\mathbb S^1$. For every $(\widehat V_1,\widehat W_1)$ and $(\widehat V_2,\widehat W_2)$ in $ \mathcal X_{T_0}$, we have 
$$ \|(\widehat V_1,\widehat W_1)- (\widehat V_2,\widehat W_2)\|= \sup _{t\geq T_0}  \left\{ e^{\frac{\lambda t}{2}} \widehat S(t) \right\}\text{ 
with } \widehat S(t):=|(\widehat V_1-\widehat V_2)(t)|+
|(\widehat W_1-\widehat W_2)(t)| .$$ Hence, there exist positive constants $C_2$ and $C_3$ such that for every $\(\sigma,\tau\)\in I_0\times\mathbb S^1$ 
$$ \begin{aligned}
 \| \Phi_{\sigma,\tau}(\widehat V_1,\widehat W_1)-\Phi_{\sigma,\tau}(\widehat V_2,\widehat W_2) \| 
 & \leq 2C_* \sup_{t\geq T_0} \left\{ e^{\frac{\lambda t}{2}}\int_t^\infty \left|g_{\sigma,\tau,\widehat V_1,\widehat W_1}(y)-
g_{\sigma,\tau,\widehat V_2,\widehat W_2}(y) \right|\,dy \right\}\\
& \leq C_2 \sup_{t\geq T_0}  \left\{ e^{\frac{\lambda t}{2}} \int_t^\infty \left[ 
(\widehat S(y))^2
+e^{-\lambda y} \widehat S(y)\right]\,dy\right\}\\
& \leq C_3 e^{-\frac{\lambda T_0 }{2}}\| (\widehat V_1,\widehat W_1)- (\widehat V_2,\widehat W_2) \|
\end{aligned} $$
for all $(\widehat V_1,\widehat W_1),(\widehat V_2,\widehat W_2)\in  \mathcal X_{T_0}$. Thus, for $T_0>0$ large, $\Phi_{\sigma,\tau}$ is a contraction on $ \mathcal X_{T_0}$ for all $\(\sigma,\tau\)\in I_0\times\mathbb S^1$.  
Hence, $\Phi_{\sigma,\tau}$ has a unique fixed point in $\mathcal X_{T_0}$, say $(\widehat V_{\sigma,\tau},\widehat W_{\sigma,\tau})$, which gives a unique solution in $\mathcal X_{T_0}$ of \eqref{Th1CGSEq6} such that $\lim_{t\to \infty}(\widehat V_{\sigma,\tau},\widehat W_{\sigma,\tau})(t)= (0,0)$. By \eqref{dstar} and $(\widehat V,\widehat W)=(\widehat V_{\sigma,\tau},\widehat W_{\sigma,\tau})$ in \eqref{Th1CGSEq5}, we obtain a solution $(V_{\sigma,\tau},W_{\sigma,\tau})$ of \eqref{Th1CGSEq2} satisfying \eqref{til} with $\varphi=\varphi_{\sigma,\tau}$. Moreover, $(V_{\sigma,\tau},W_{\sigma,\tau})$ is continuous in $\(\sigma,\tau\)$ since $\Phi_{\sigma,\tau}$ is continuous in $\(\sigma,\tau\)$. 

\smallskip\noindent To prove uniqueness, on $\Omega_0:=I_0\times\mathbb S^1\times(0,e^{-T_0})$, we define the functions $H,G:\Omega_0\to\R^3$ by 
$$H\(\sigma,\tau,r\):=\(V_{\sigma,\tau}\(t\(r\)\),W_{\sigma,\tau}\(t\(r\)\),r\)\hbox{ and }G\(\sigma,\tau,r\):=\(\varphi_{\sigma,\tau}\(t\(r\)\),\varphi'_{\sigma,\tau}\(t\(r\)\),r\)$$
for every $\(\sigma,\tau,r\)\in\Omega_0$, where $t\(r\):=\log \(1/r\)$. From our construction, $H$ is continuous. Since $V_{\sigma,\tau}$ is a solution to a second-order ODE and $W_{\sigma,\tau}=V_{\sigma,\tau}'$, the uniqueness theorem for ODEs yields that $H$ is one-to-one in $\Omega_0$. Clearly, $G$ is also continuous and one-to-one in $\Omega_0$. Thus, by applying the Domain Invariance Theorem, we obtain that $H$ and $G$ are open. Moreover, since the functions $\left\{\varphi_{\sigma,\tau}\right\}_{\(\sigma,\tau\)\in I_0\times\mathbb S^1}$ are periodic, we see that $G\(\Omega_0\)=\Sigma_0\times(0,e^{-T_0})$ for some domain $\Sigma_0$ in $\R^2$. Let $H_0:\Sigma_0\times(-e^{-T_0},e^{-T_0})\to\R^3$ be the function defined as
$$H_0\(\xi_1,\xi_2,r\):=\left\{\begin{aligned}&H\(G^{-1}\(\xi_1,\xi_2,r\)\)&&\text{if }r>0\\&\(\xi_1,\xi_2,0\)&&\text{if }r=0\\&J\(H\(G^{-1}\(\xi_1,\xi_2,-r\)\)\)&&\text{if }r<0,\end{aligned}\right.$$
where $J (\xi_1,\xi_2,r):=(\xi_1,\xi_2,-r)$. Since $H$ and $G$ are one-to-one in $\Omega_0$, we obtain that $H_0$ is one-to-one in $\Sigma_0\times(-e^{-T_0},e^{-T_0})$. Moreover, since $H$ and $G^{-1}$ are continuous in $\Omega_0$, we obtain that $H_0$ is continuous in $\Sigma_0\times[(-e^{-T_0},e^{-T_0})\backslash\left\{0\right\}]$. As regards the continuity of $H_0$ on $\Sigma_0\times\left\{0\right\}$, for every $\(\zeta_1,\zeta_2\)\in\Sigma_0$ and $\(\xi_1,\xi_2,r\)\in\Sigma_0\times[(-e^{-T_0},e^{-T_0})\backslash\left\{0\right\}]$, we write
\begin{multline}\label{Continuity}
\left|H_0\(\xi_1,\xi_2,r\)-H_0\(\zeta_1,\zeta_2,0\)\right|\le\left|H_0\(\xi_1,\xi_2,r\)-\(\xi_1,\xi_2,r\)\right|+\left|\(\xi_1,\xi_2,r\)-\(\zeta_1,\zeta_2,0\)\right|\\
\le\left|\(V_{\sigma,\tau}\(t\(\left|r\right|\)\)-\varphi_{\sigma,\tau}\(t\(\left|r\right|\)\),W_{\sigma,\tau}\(t\(\left|r\right|\)\)-\varphi'_{\sigma,\tau}\(t\(\left|r\right|\)\)\)\right|+\left|\(\xi_1,\xi_2,r\)-\(\zeta_1,\zeta_2,0\)\right|,
\end{multline}
where $\(\sigma,\tau,\left|r\right|\)=G^{-1}\(\xi_1,\xi_2,\left|r\right|\)$. Since $(\widehat V_{\sigma,\tau},\widehat W_{\sigma,\tau})\in \mathcal X_{T_0}$, it follows from \eqref{dstar}, \eqref{Th1CGSEq5} and \eqref{Continuity} that for every $\(\zeta_1,\zeta_2,0\)\in\Sigma_0\times\left\{0\right\}$, $H_0\(\xi_1,\xi_2,r\)\to H_0\(\zeta_1,\zeta_2,0\)$ as $\(\xi_1,\xi_2,r\)\to\(\zeta_1,\zeta_2,0\)$ i.e., $H_0$ is continuous at $\(\zeta_1,\zeta_2,0\)$. By another application of the Domain Invariance Theorem, we obtain that $H_0$ is open. We let $\Sigma_*$ be the domain such that $\Sigma_*\times\left\{r\right\}=G(\(\sigma_*,\sigma_0\)\times\mathbb S^1,r
)$ for every $r>0$. In particular, since $\Sigma_*$ is open, we obtain that for every $\(\sigma,\tau\)\in (\sigma_*,\sigma_0)\times\mathbb S^1$, every solution $(V\(t\),W\(t\))$ of \eqref{Th1CGSEq2} satisfying
\begin{equation} \label{til2}
\( V(t),W(t)\)-\(\varphi_{\sigma,\tau}(t),\varphi_{\sigma,\tau}'(t)\)\to (0,0)\quad \text{as } t\to \infty
\end{equation}
must satisfy $(X(t\(r\)),Y(t\(r\)))\in \Sigma_*$ for small $r>0$. Since $\Sigma_*\times\left\{0\right\}\Subset H_0(\Sigma_0\times(-e^{-T_0},e^{-T_0}))$ and $H_0$ is open, we obtain that there exists $R_0\in(0,e^{-T_0})$ such that $\Sigma_*\times \[-R_0,R_0\]\subset H_0(\Sigma_0\times(-e^{-T_0},e^{-T_0}))$. It then follows from the definitions of $H_0$ and $\Sigma_*$ that $\Sigma_*\times\(0,R_0\]\subset H(I_0\times\mathbb S^1\times(0,R_0])$. Hence, for every solution $(X\(t\),Y\(t\))$ of \eqref{Th1CGSEq2} satisfying \eqref{til2} for some $\(\sigma,\tau\)\in (\sigma_*,\sigma_0)\times\mathbb S^1$, we obtain $(X(t\(r\)),Y(t\(r\)),r)\in H(I_0\times\mathbb S^1\times(0,R_0])$ and so $(X(t\(r\)),Y(t\(r\)))=(V_{\sigma,\tau}(t\(r\)),W_{\sigma,\tau}(t\(r\)))$ for small $r>0$. Hence, for every $\varphi=\varphi_{\sigma,\tau}$ with $\(\sigma,\tau\)\in(\sigma_*,\sigma_0)\times\mathbb S^1$, we conclude that 
$(V_{\sigma,\tau},W_{\sigma,\tau})$ is the unique solution of \eqref{Th1CGSEq2} satisfying \eqref{til}. This ends Step~\ref{step621}. \qed
\end{proof}

\begin{step} \label{step622}
Proof of Lemma~\ref{lem62} if 
$\varphi\in \cup \{ \varphi_{\sigma,\tau}\}_{(\sigma,\tau)\in [\sigma_0,\overline \sigma ]\times {\mathbb S}^1}$ for $\sigma_0\in (0,\overline \sigma)$ close enough to $\overline \sigma$. \end{step}

\begin{proof}[Proof of Step~\ref{step622}] For $(\sigma,\tau)\in (0,\overline\sigma]\times \mathbb{S}^1$, we search for $T_0\in\mathbb{R}$ and $V,W\in C^1([T_0,+\infty))$ such that \eqref{Th1CGSEq2} holds and $(V(t),W(t))-(\varphi_{\sigma,\tau}(t),\varphi_{\sigma,\tau}'(t))\to (0,0)$ as $t\to\infty$. Writing $\tilde{V}=V-\varphi_{\sigma,\tau}$ and $\tilde{W}=W-\varphi_{\sigma,\tau}'$, this is equivalent to finding $T_0\in\mathbb{R}$ and $\tilde{V},\tilde{W}\in C^1([T_0,+\infty))$ such that 
\begin{equation}\label{syst:tilde}
\left\{ \begin{aligned}
& (\tilde{V}',\tilde{W}')=(\tilde{W},f(\varphi_{\sigma,\tau}+\tilde{V})-f(\varphi_{\sigma,\tau})+\mu e^{-\lambda t}(\varphi_{\sigma,\tau}+\tilde{V})^q )
\quad \text{in } [T_0,\infty),\\
&  (\tilde{V}(t),\tilde{W}(t))\to (0,0)\ \ \text{as } t\to +\infty. 
\end{aligned}\right.
\end{equation}
We define $L(\varphi_{\sigma,\tau},\varphi_{\overline\sigma})(t):=f'(\varphi_{\sigma,\tau}(t))-f'(\varphi_{\overline\sigma}(t))$ for $t\in \R$ and 
$$A:=\left(\begin{array}{cc} 0 & 1\\ f'(M_0) & 0\end{array}\right).$$
Since $\varphi_{\overline\sigma}\equiv M_0$ and $\varphi_{\sigma,\tau}\to\varphi_{\overline\sigma}$ as $\sigma\to\overline\sigma$ uniformly with respect to $\tau\in \mathbb{S}^1$, we get that
\begin{equation}\label{lim:H}
\lim_{\sigma\to\overline\sigma}\sup_{\tau\in \mathbb{S}^1,\, t\in\mathbb{R}}|L(\varphi_{\sigma,\tau},\varphi_{\overline\sigma})(t)|=0.
\end{equation}
With a Taylor expansion, we write
\begin{eqnarray*} 
f(\varphi_{\sigma,\tau}+\tilde{V})-f(\varphi_{\sigma,\tau})&=&f'(\varphi_{\overline\sigma})\tilde{V}+L(\varphi_{\sigma,\tau},\varphi_{\overline\sigma})\tilde{V}+Q(\varphi_{\sigma,\tau},\tilde{V})
\end{eqnarray*}
with $|Q(\varphi_{\sigma,\tau},\tilde{V})|\leq C |\tilde{V}|^2$. Therefore, the system in \eqref{syst:tilde} rewrites as follows
\begin{equation}\left(\begin{array}{c}
\tilde{V}\\
\tilde{W}\end{array}\right)^\prime=A\left(\begin{array}{c}
\tilde{V}\\
\tilde{W}\end{array}\right)+\left(\begin{array}{c}
0\\
L(\varphi_{\sigma,\tau},\varphi_{\overline\sigma})\tilde{V}+Q(\varphi_{\sigma,\tau},\tilde{V})+\mu e^{-\lambda t}(\varphi_{\sigma,\tau}+\tilde{V})^q\end{array}\right). 
\end{equation}
Since $f'(M_0)<0$, we get that $A$ has two conjugate pure imaginary eigenvalues. Therefore, there exists $C>0$ such that $\Vert e^{tA}\Vert+\Vert e^{-tA}\Vert\leq C$ for all $t\in\mathbb{R}$,
where $\Vert\cdot\Vert$ is any operator norm on $\mathbb{R}^2$. For all $t\geq T_0$, we define $\tilde{X}(t):=e^{-t A}\left(\begin{array}{c}
\tilde{V}(t)\\
\tilde{W}(t)\end{array}\right)$ 
and $$\Phi_{\varphi_{\sigma,\tau}}(t,\tilde{X}):=e^{-tA}\left(\begin{array}{c}
0\\
L(\varphi_{\sigma,\tau},\varphi_{\overline\sigma})(e^{tA}\tilde{X})_1+Q(\varphi_{\sigma,\tau},(e^{tA}\tilde{X})_1)+\mu e^{-\lambda t}(\varphi_{\sigma,\tau}+(e^{tA}\tilde{X})_1)^q\end{array}\right),
$$ where $(e^{tA}\tilde{X})_1$ denotes the first coordinate of $e^{tA}\tilde{X}\in \mathbb{R}^2$. 
Then getting a solution to \eqref{syst:tilde} 
amounts to finding a solution $\tilde{X}\in C^1([T_0,+\infty),\mathbb{R}^2)$ to
\begin{equation}\label{syst:tilde:2}
\tilde{X}'(t)=\Phi_{\varphi_{\sigma,\tau}}(t,\tilde{X}(t))\hbox{ for }t\geq T_0\hbox{ and }\lim_{t\to +\infty}\tilde{X}(t)=0.
\end{equation}
As in Step \ref{step621}, we find a solution to \eqref{syst:tilde:2} via the Fixed Point Theorem for contracting maps on a complete metric space. Since $Q(\varphi_{\sigma,\tau},\tilde{V})$ is quadratic in $\tilde{V}$, the last two terms of the second coordinate of  $\Phi_{\varphi_{\sigma,\tau}}(t,\tilde{X})$ are tackled as in Step \ref{step621}. The first term is linear in $\tilde{V}$ and controled by  $L(\varphi_{\sigma,\tau},\varphi_{\overline\sigma})$: with \eqref{lim:H}, this term is contracting for $\sigma$ close enough to $\overline\sigma$. Mimicking the existence proof of Step \ref{step621}, we get the following:

\smallskip\noindent There exist $\epsilon>0$ and $T_0>0$ such that for every $(\sigma,\tau)\in [\overline\sigma-3\epsilon,\overline\sigma]\times 
\mathbb{S}^1$, there exists a solution $(V_{\sigma,\tau},W_{\sigma,\tau})\in C^1([T_0,+\infty),\mathbb{R}^2)$ to \eqref{Th1CGSEq2} such that \eqref{til} holds for $\varphi=\varphi_{\sigma,\tau}$. Moreover, since $(\sigma,\tau)\longmapsto (\varphi_{\sigma,\tau},\varphi_{\sigma,\tau}')$ is continuous on $(0,\overline\sigma]\times \mathbb{S}^1$ (despite the issue for $\overline\sigma$), the continuity of the fixed points depending on a parameter yields that $(\sigma,\tau)\longmapsto (V_{\sigma,\tau},W_{\sigma,\tau})$ is continuous on $[\overline\sigma-3\epsilon,\overline\sigma]\times \mathbb{S}^1$. Here we have taken the supremum norm on $C^0([T_0,+\infty),\mathbb{R}^2)$: via the fixed point construction, we also get that this holds with a weighted norm.

\smallskip\noindent We only sketch the uniqueness proof. For $\tau_0=(1,0)\in \mathbb{S}^1$ and 
every $\xi\in B(0,2\epsilon)\subset \mathbb{R}^2$, 
we define 
$$\sigma(\xi):=\overline\sigma-|\xi|\hbox{ and }\left\{\tau(\xi):=\xi/|\xi|\hbox{ if }\xi\neq 0\hbox{ and }\tau(0)=\tau_0\right\}. $$
Due to the uniqueness of solution for $\sigma=\overline{\sigma}$, as one checks, we have the continuity of the mappings $\xi\longmapsto (\varphi_{\sigma(\xi),\tau(\xi)},\varphi_{\sigma(\xi),\tau(\xi)}')$ and $\xi\longmapsto (V_{\sigma(\xi),\tau(\xi)},W_{\sigma(\xi),\tau(\xi)}')$ on $B(0,2\epsilon)$. We introduce the domain $\Omega_0:=B(0,2\epsilon)\times(0,e^{-T_0})$ and the functions $H,G:\Omega_0\to\R^3$ defined as 
$$H\(\xi,r\):=\(V_{\sigma,\tau}\(t\),W_{\sigma,\tau}\(t\),r\)\hbox{ and }G\(\xi,r\):=\(\varphi_{\sigma,\tau}\(t\),\varphi'_{\sigma,\tau}\(t\),r\)$$
for every $\(\xi,r\)\in\Omega_0$, where $t\(r\):=\log \(1/r\)$, $\sigma=\sigma(\xi)$ and $\tau=\tau(\xi)$. Arguing as in Step \ref{step621} and with some extra care for the case $\xi=0$, we get the uniqueness of the solution of \eqref{Th1CGSEq2} satisfying \eqref{til} for $\varphi=\varphi_{\sigma,\tau}$. This ends the proof of Step~\ref{step622}. \qed
\end{proof}

This completes the proof of Lemma~\ref{lem62} and thus of Proposition~\ref{CGS}. \qed
\end{proof}

\section{Appendix} \label{appen}

Here, we establish Theorem~\ref{71}, a critical result that was used in the proof of Lemma~\ref{exist:sol}. 
The proof of Theorem~\ref{71} is strongly inspired by Kelley's paper \cite{Kel}. 
We denote by $B_\delta(0)\subset \R^3$ the ball centered at $0$ with radius $\delta>0$. For any $r_0>0$, we set $D_{r_0}:=[0,r_0]\times [-r_0,r_0]$. 

\begin{theorem} \label{71} Let $h_j\in C^1(B_\delta(0))$ for some $\delta>0$ with $j=1,2,3$. Suppose there exist constants $C_1>0$ and $p>1$ such that for all $\vec \xi=(\xi_1,\xi_2,\xi_3)\in B_\delta(0)$, we have
	\begin{equation} \label{hy}\left\{ \begin{aligned}
	&\sum_{j=1}^3 |h_j(\vec \xi)|\leq C_1\sum_{j=1}^3 \xi_j^2\ \text{and}\ \
	\sum_{j=1}^3  |\nabla h_j(\vec \xi)|\leq C_1\sum_{j=1}^3 |\xi_j|,\\
	& h_2(\vec \xi)\leq -C_1|\xi_2|^p\ \text{and }
	h_2(\xi_1,0,\xi_3)=0.
	\end{aligned} \right.\end{equation}
For fixed $a>0$ and 
	$c<0$, we consider the system
	\begin{equation}\label{syst:th:bis}
\left\{\begin{aligned}
	&\vec {\mathcal Z}'(t)=(a{\mathcal Z}_1(t)+h_1(\vec {\mathcal Z}(t)),h_2(\vec {\mathcal Z}(t)),c{\mathcal Z}_3(t)+
	h_3(\vec {\mathcal Z}(t)))\quad \text{for }t\geq 0,\\
	&  \vec {\mathcal Z}(0)=(x_0,y_0,z_0).
\end{aligned}	\right.
	\end{equation}
	Then there exist $r_0\in (0,\delta/2)$ and a Lipschitz function $w: D_{r_0}\to [-r_0,r_0]$ such that for all $(y_0,z_0)\in D_{r_0}$ and $x_0=w(y_0,z_0)$, the initial value system \eqref{syst:th:bis} has a solution $\vec {\mathcal Z}$ on $[ 0,\infty)$ and 
	\begin{equation} \label{mn}\lim_{t\to +\infty}  \vec {\mathcal Z}(t)=(0,0,0).\end{equation}
	Moreover, we have that the parametrized surface $({\mathcal Z}_2,{\mathcal Z}_3)\longmapsto (w({\mathcal Z}_2,{\mathcal Z}_3),{\mathcal Z}_2,{\mathcal Z}_3)$ is stable in the sense that ${\mathcal Z_1}(t)=w({\mathcal Z}_2(t),{\mathcal Z}_3(t))$ for all $t\geq 0$. \end{theorem}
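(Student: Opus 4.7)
The plan is to construct the centre-stable manifold via a Lyapunov--Perron fixed-point argument in the spirit of Kelley \cite{Kel}. Any solution $\vec{\mathcal Z}$ of \eqref{syst:th:bis} that remains in $B_\delta(0)$ and satisfies \eqref{mn} must obey the three integral identities
\begin{align*}
\mathcal Z_1(t) &= -\int_t^{\infty} e^{a(t-s)}h_1(\vec{\mathcal Z}(s))\,ds,\\
\mathcal Z_2(t) &= y_0 + \int_0^t h_2(\vec{\mathcal Z}(s))\,ds,\\
\mathcal Z_3(t) &= e^{ct}z_0 + \int_0^t e^{c(t-s)}h_3(\vec{\mathcal Z}(s))\,ds,
\end{align*}
since $a>0$ forces backward integration in the unstable direction, $c<0$ permits forward integration in the stable one, and $\mathcal Z_2$ has no linear part. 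Evaluating the first identity at $t=0$ provides the candidate
\[
w(y_0,z_0) := -\int_0^{\infty} e^{-as}h_1(\vec{\mathcal Z}(s))\,ds,
\]
so that the whole problem reduces to finding, for every $(y_0,z_0)\in D_{r_0}$, a fixed point $\vec{\mathcal Z}$ of the right-hand-side operator $\mathcal T$.

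The principal obstacle is the absence of linear damping in the central direction $\mathcal Z_2$: decay must be extracted entirely from the nonlinear structure. The sign assumption $h_2(\vec\xi)\leq -C_1|\xi_2|^p$ together with $h_2(\xi_1,0,\xi_3)=0$ yields, once $\vec{\mathcal Z}$ solves the ODE, the differential inequality $\mathcal Z_2'\leq -C_1\mathcal Z_2^p$ while $\mathcal Z_2>0$, hence the a priori polynomial decay $\mathcal Z_2(t)\leq (y_0^{1-p}+C_1(p-1)t)^{-1/(p-1)}$. The identity $h_2(\xi_1,0,\xi_3)=0$, combined with the gradient bound in \eqref{hy}, furnishes moreover the crucial factorisation $|h_2(\vec\xi)|\leq C_1|\xi_2|(|\xi_1|+|\xi_2|+|\xi_3|)$, which is what makes the $\mathcal Z_2$ component of $\mathcal T$ compatible with a mixed-norm contraction framework.

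For the fixed-point space I would take a closed subset of
\[
\mathcal X := \Bigl\{\vec{\mathcal Z}\in C([0,\infty),\R^3):\sup_{t\geq 0}(1+t)^{\alpha}(|\mathcal Z_1(t)|+|\mathcal Z_3(t)|)+\sup_{t\geq 0}|\mathcal Z_2(t)|<\infty\Bigr\}
\]
for some small $\alpha\in(0,1/(p-1))$, further restricted to non-negative non-increasing $\mathcal Z_2$ dominated by the explicit super-solution above. Invariance of these properties under $\mathcal T$ is straightforward: $\mathcal T_2\vec{\mathcal Z}$ inherits monotonicity from $h_2\leq 0$ and non-negativity from the super-solution bound. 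Using $|h_j|\leq 3C_1\|\vec{\mathcal Z}\|^2(1+s)^{-2\alpha}$ for $j=1,3$ together with the elementary convolution estimates
\[
\int_t^{\infty}e^{a(t-s)}(1+s)^{-2\alpha}\,ds\lesssim a^{-1}(1+t)^{-2\alpha},\qquad\int_0^{t}e^{c(t-s)}(1+s)^{-2\alpha}\,ds\lesssim |c|^{-1}(1+t)^{-2\alpha},
\]
one controls the $\mathcal Z_1$ and $\mathcal Z_3$ components of $\mathcal T\vec{\mathcal Z}$ at the desired weighted rate, and the Lipschitz estimates $|\nabla h_j|\leq C_1|\vec\xi|$ yield a contraction on a sufficiently small ball, producing a unique fixed point $\vec{\mathcal Z}_{(y_0,z_0)}$.

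The Lipschitz regularity of $w$ on $D_{r_0}$ follows from the Lipschitz dependence of the fixed point on $(y_0,z_0)$, a routine consequence of the contraction scheme since these parameters enter $\mathcal T$ affinely. Convergence $\vec{\mathcal Z}(t)\to 0$ as $t\to\infty$ is then immediate from the weighted bound on $(\mathcal Z_1,\mathcal Z_3)$ and the differential inequality for $\mathcal Z_2$. The stability identity $\mathcal Z_1(t)=w(\mathcal Z_2(t),\mathcal Z_3(t))$ for all $t\geq 0$ is obtained by re-running the construction from the initial time $t$: the shifted solution $s\mapsto\vec{\mathcal Z}(t+s)$ is a decaying solution with initial data $(\mathcal Z_1(t),\mathcal Z_2(t),\mathcal Z_3(t))$, so the uniqueness of the fixed point forces the stated identity. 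The most delicate point of the whole scheme is the simultaneous calibration of $\alpha$, $r_0$ and the a priori super-solution for $\mathcal Z_2$ so that the slow polynomial decay in the central direction remains compatible with the quadratic absorption of the nonlinearities; this is precisely where the structural hypothesis $h_2(\xi_1,0,\xi_3)=0$ is indispensable.
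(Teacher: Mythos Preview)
Your Lyapunov--Perron scheme on trajectory space has a genuine gap in the treatment of the centre direction $\mathcal Z_2$. The weighted estimate you invoke for $h_1,h_3$, namely $|h_j(\vec{\mathcal Z}(s))|\lesssim (1+s)^{-2\alpha}$, requires $\mathcal Z_2(s)$ itself to decay like $(1+s)^{-\alpha}$; your norm on $\mathcal X$ imposes no such decay, so you rely entirely on the additional constraint $\mathcal Z_2\le \bar y(s):=(y_0^{1-p}+C_1(p-1)s)^{-1/(p-1)}$. But this constraint is \emph{not} invariant under $\mathcal T_2$. Take any element of your space with $\mathcal Z_2\equiv 0$ (non-negative, non-increasing, and trivially $\le\bar y$): since $h_2(\xi_1,0,\xi_3)=0$ one gets $\mathcal T_2\vec{\mathcal Z}(t)\equiv y_0$, which violates $\mathcal T_2\vec{\mathcal Z}\le\bar y$ for large $t$. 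More generally, $h_2\le -C_1\mathcal Z_2^p$ yields only $\mathcal T_2\vec{\mathcal Z}(t)\le y_0-C_1\int_0^t\mathcal Z_2^p$, and if $\mathcal Z_2$ is merely \emph{dominated} by $\bar y$ this integral can be arbitrarily small, so $\mathcal T_2\vec{\mathcal Z}$ need not decay at all. The differential inequality $\mathcal Z_2'\le -C_1\mathcal Z_2^p$ produces decay only for actual solutions, not for generic elements of the fixed-point space; and without that decay, neither the mapping property for $\mathcal T_1,\mathcal T_3$ nor the contraction for $\mathcal T_2$ (whose Lipschitz bound $\int_0^t 3C_1 r_0\,|\vec{\mathcal Z}-\vec{\mathcal Z}'|\,ds$ grows linearly in $t$) goes through.

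The paper avoids this obstacle by running the fixed point on the space of \emph{graphs} rather than trajectories (a Hadamard/graph-transform argument). One fixes a complete metric space $\mathcal X$ of $C_2$-Lipschitz functions $w:D_{r_0}\to[-r_0,r_0]$ with $w(0,0)=0$, and for each $w\in\mathcal X$ solves the genuine two-dimensional ODE $(y',z')=\big(h_2(w(y,z),y,z),\,cz+h_3(w(y,z),y,z)\big)$ with data $(y_0,z_0)$. Because $(y,z)=\Phi_t^w(y_0,z_0)$ is now an actual solution, the inequality $y'\le -C_1 y^p$ holds along the flow and forces $y(t)\to 0$, after which $z(t)\to 0$ follows from $c<0$ and a differential inequality for $z^2$. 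One then sets
\[
Tw(y_0,z_0):=-\int_0^\infty e^{-as}h_1\big(w(\Phi^w_s(y_0,z_0)),\Phi^w_s(y_0,z_0)\big)\,ds,
\]
shows $T:\mathcal X\to\mathcal X$ is a contraction via a Gr\"onwall comparison of the flows $\Phi^{w_1},\Phi^{w_2}$, and recovers the unstable component a posteriori as $x(t):=w(y(t),z(t))$, checking $x'=ax+h_1(x,y,z)$ from the semigroup property of $\Phi^w$. The point is that the centre variable is handled by solving an ODE, never by an integral operator, which is exactly the step that fails in your scheme.
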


\begin{proof} 
Since $h_j\in C^1(B_\delta(0))$ for $1\leq j\leq 3$, the Cauchy--Lipschitz theory applies to the system. For $r_0\in (0,\delta/2)$ and $C_2>0$, we define $\mathcal X$ as the set of all continuous functions $w:D_{r_0}\to [-r_0,r_0]$ such that $w(0,0)=0$ and $w$ is $C_2$-Lipschitz. 
Note that $(\mathcal X,\Vert\cdot\Vert_\infty)$ is a complete metric space. 
For any $w\in \mathcal X$, we consider the system
\begin{equation} \label{syst:w} \tag{$S_w$}
\left\{ \begin{aligned}
& (y',z')= \( h_2 (w(y,z),y,z), cz+h_3(w(y,z),y,z) \) \quad \text{on } [0,\infty),\\ 
& (y(0),z(0))=(y_0,z_0).
\end{aligned} \right.
\end{equation}

We now divide the proof of Theorem~\ref{71} in five Steps.

\begin{step} \label{step71}
Let $r_0\in (0,\delta/2)$ be such that $4C_1(1+C_2^2) \,r_0\leq |c|$. If $(y_0,z_0)\in D_{r_0}$, then the flow $\Phi_t^w(y_0,z_0)$ associated to \eqref{syst:w} is defined for all $t\in [0,+\infty)$. If we set 
\begin{equation} \label{yzx} (y(t),z(t)):=\Phi_t^w(y_0,z_0)\quad \text{for all }t\in [0,\infty),\end{equation} then 
$ 0\leq y(t)\leq y_0$ and $ |z(t)|\leq \max\{y_0,|z_0|\}$ on $[0,\infty)$. Moreover, we have
\begin{equation} \label{ac} \lim_{t\to \infty} (y(t),z(t))=(0,0).\end{equation}
\end{step}

\begin{proof}[Proof of Step~\ref{step71}]
Let $(y_0,z_0)\in D_{r_0}$ be arbitrary. Since the Cauchy--Lipschitz theory applies, the initial value problem \eqref{syst:w} has a 
unique solution $(y,z) $ 
on an interval $[0,b)$ with $b>0$. We prove the following:
\begin{itemize} \item[(i)] $y\equiv 0$ if $y_0=0$ and 
$0<y(t)\leq y_0$ for all $t\in [0,b)$ if $y_0\in (0,r_0]$;
\item[(ii)] $|z(t)|\leq \max\{y_0,|z_0|\}$ for every $t\in [0,b)$.  
\end{itemize}

\begin{proof}[Proof of (i)] We write $h_2 (w(y(t),z(t)),y(t),z(t))=\widehat h_2(t,y(t))$ for $t\in [0,b)$, where $\widehat h_2(t,y)$ is continuous in $t\in [0,b)$ and Lipschitz with respect to $y\in [0,r_0]$. The assumption \eqref{hy} yields
$\widehat h_2(\cdot,0)=0$ on $[0,b)$ and $\widehat  h_2(t,y(t))\leq 0$ for all $t\in [0,b)$. The claim of (i) holds since $y'(t)\leq 0$ on $[0,b)$ and $y$ is the unique solution of $y'(t)=\widehat h_2(t,y(t))$ for $t\in [0,b)$, subject to  $y(0)=y_0$. \qed \end{proof}

\begin{proof}[Proof of (ii)] 
Since $c<0$, using the system $(S_w)$, we find that
\begin{equation}\label{eq:zdeux}
(z^2)'=2\left(-|c|z^2+z \,h_3(w(y,z),y,z)\right)\quad \text{on } [0,b).
\end{equation}
Since $w$ is a $C_2$-Lipschitz function, using the hypothesis on $h_3$ in \eqref{hy}, we have
\begin{equation}\label{control:h}
\left| z\,h_3(w(y,z),y,z)\right|\leq C_1 |z| \left[w^2(y,z)+y^2+z^2\right]\leq C_1\(1+C_2^2\)|z|\(y^2+z^2\). 
\end{equation}
Using $|z|\leq r_0$ and the choice of $r_0>0$, from \eqref{control:h} we obtain that
\begin{equation} \label{nuc}
\left| z\,h_3(w(y,z),y,z)\right|\leq   |c|\max \{y^2,z^2\}/2\quad \text{on } [0,b).
\end{equation}

If $y_0=0$, then $y\equiv 0$ on $[0,b)$ by (i). From \eqref{nuc} and \eqref{eq:zdeux}, we have $(z^2)'\leq 0$ on $[0,b)$, which yields
$|z(t)|\leq |z_0|$ for all $t\in [0,b)$, proving (ii) if $y_0=0$.   

We now prove (ii) when $y_0>0$. If there exists $t_0\in [0,b)$ such that $|z(t_0)|=y_0$, then 
using (i) and \eqref{nuc}, we find that 
$ \left| z\,h_3(w(y,z),y,z)\right|(t_0) <|c|z^2(t_0)$. 
Thus, \eqref{eq:zdeux} yields that $(z^2)'(t_0)<0$. This means that $|z(t)|=y_0$ has at most a solution in $[0,b)$. 
Hence, one of the following holds: 
\begin{itemize}
\item[(a)] $|z(t)|\leq y_0$ for all $t\in [0,b)$, which immediately yields (ii); 
\item[(b)] $|z(t)|\geq y_0$ for all $t\in [0,b)$;
\item[(c)] For some $t_0\in (0,b)$, we have 
$|z|> y_0$ on $t\in [0,t_0)$ and  
$|z|< y_0$ on $(t_0,b)$. 
\end{itemize}
Using \eqref{nuc} into \eqref{eq:zdeux}, we get $(z^2)'<0$ on $[0,b)$ in case (b) and on $[0,t_0)$ in case (c) since $\max\{y^2,z^2\}=z^2$. Thus 
in case (b) and (c) respectively, we find that $|z|\leq |z_0|$ on $ [0,b)$ and $[0,t_0)$, respectively. 
This proves (ii) when $y_0>0$. \qed 
\end{proof}

By (i), (ii) and the finite-time blow-up of solutions of ODEs, the flow $\Phi_t^w(y_0,z_0)$ associated to \eqref{syst:w} is defined for all $t\in [0,+\infty)$. Let $(y(t),z(t))$ be as in \eqref{yzx}.  

\begin{proof}[Proof of \eqref{ac}]
If $y_0=0$, then $y\equiv 0$ on $[0,\infty)$. Assuming $y_0>0$, then $y>0$ on $[0,\infty)$. The hypothesis on $h_2$ in \eqref{hy} implies that  
$(y^{1-p})'(t)\geq (p-1)C_1$ for all $t\geq 0$. By integration, we get that $\lim_{t\to +\infty}y(t)= 0$. Hence, for every $\varepsilon>0$, there exists
$t_\varepsilon>0$ large such that $0\leq y\leq \varepsilon$ on $[t_\varepsilon,\infty)$. 
To prove that $\lim_{t\to +\infty}z(t)= 0$, we show that there exists $\widetilde t_\varepsilon\geq t_\varepsilon$ such that 
$|z(t)|\leq \varepsilon$ for all $t\geq \widetilde t_\varepsilon$. Indeed, with a similar argument to the proof of (ii), it can be shown that
$|z(t)|=\varepsilon$ has at most one zero on $[t_\varepsilon,\infty)$. 
The option $|z|\geq \varepsilon$ on $[t_\varepsilon,\infty)$ is not viable here. Indeed, if $|z|\geq \varepsilon$ on $[t_\varepsilon,\infty)$, then 
again from \eqref{eq:zdeux} and \eqref{nuc}, we would have $(z^2)'\leq -|c| z^2\leq -|c|\varepsilon^2$ on $[t_\varepsilon,\infty)$, leading to 
a contradiction.
Hence, either $|z|\leq \varepsilon$ on $[t_\varepsilon,\infty)$ or 
there exists $\widetilde t_\varepsilon\in (t_\varepsilon,\infty)$ such that $|z|>\varepsilon$ on $[t_\varepsilon,\widetilde t_\varepsilon)$ and
$|z|<\varepsilon$ on $(\widetilde t_\varepsilon,\infty)$. In either of these cases, the conclusion $\lim_{t\to +\infty}z(t)= 0$ follows. This proves \eqref{ac}.  \qed
\end{proof}
The proof of Step~\ref{step71} is now complete. \qed
\end{proof}

\begin{step} \label{step72}
For any $\rho>0$, let $r_0\in (0,\delta/2)$ be as in Step~\ref{step71}  
and $3C_1(3+2C_2) r_0<\rho$. Then for any $w_j\in \mathcal X$ and  $\big(y_{0}^{(j)},z_{0}^{(j)}\big)\in D_{r_0}$ with $j=1,2$, we have 
\begin{equation}
\left| (y_1,z_1)-(y_2,z_2) \right|(t)\leq  e^{\rho t}\Big(\| w_1-w_2\|_\infty+
\big|(y_{0}^{(1)},z_{0}^{(1)})-(y_{0}^{(2)},z_{0}^{(2)})\big|\Big)
\end{equation}
for all $t\in [0,\infty)$, where we denote 
$(y_j(t),z_j(t)):=\Phi_t^{w_j}(y_{0}^{(j)},z_{0}^{(j)})$ for $j=1,2$. 
\end{step}

\begin{proof}[Proof of Step~\ref{step72}]
We denote $Y:=y_1-y_2$ and $Z:=z_1-z_2$. It suffices to prove that
\begin{equation} \label{suf}
e^{-2\rho t} (Y^2+Z^2)(t)
\leq \|w_1-w_2\|_\infty^2+(Y^2+Z^2)(0)\quad \text{for all } t\geq 0. 
\end{equation}
When clear, we drop the dependence on $t$ in notation. For $j=1,2$, we set
\begin{equation} \label{Ldef} P_j:=(w_j(y_j,z_j),y_j,z_j)  \ \text{and } 
 L:= Y\left[h_2(P_1)-h_2(P_2)\right]+Z\left[h_3(P_1)-h_3(P_2)\right].
 \end{equation}
By a simple calculation, we see that
\begin{equation} \label{hop}
\left(e^{-2\rho t}(Y^2+Z^2)\right)^\prime=2 e^{-2\rho t}\left[
-\rho \(Y^2+Z^2\)-\left|c\right|Z^2+L\right].
\end{equation}
We show that $L$ in \eqref{Ldef} satisfies
\begin{equation} \label{esti}
|L|\leq 3C_1 r_0\left[(3+2C_2) (Y^2+Z^2)+
\|w_1-w_2\|^2_\infty\right].
\end{equation}
\begin{proof}[Proof of \eqref{esti}] Since $\max\{|y_j|,|z_j|, |w_j(y_j,z_j)|\}\leq r_0$ for $j=1,2$, by
	the assumption on $|\nabla h_2|$ and $|\nabla h_3|$ in \eqref{hy}, we infer that 
\begin{equation} \label{lin} \sup_{\xi\in [0,1]}|(\nabla \phi)(\xi P_1+(1-\xi)P_2)| \leq 3C_1 r_0 \end{equation} with $\phi=h_2$ and $\phi=h_3$. Therefore, we get that 
\begin{equation} \label{ha1} |L|\leq 3C_1r_0 |P_1-P_2| \left(|Y|+|Z|\right)\leq 3\sqrt{2} C_1 r_0 |P_1-P_2| \sqrt{Y^2+Z^2}.\end{equation}
Set $a_1=\|w_1-w_2\|_\infty $ and $a_2=\sqrt{Y^2+Z^2}$. 
Using \eqref{Ldef} and that $w_1$ is $C_2$-Lipschitz, we get
\begin{equation} \label{ha2} \begin{aligned}|P_1-P_2|& \leq |w_1(y_1,z_1)-w_2(y_2,z_2)|+a_2 \\
&\leq |w_1(y_1,z_1)-w_1(y_2,z_2)|+a_1+ 
a_2\leq (1+C_2)a_2+a_1.
\end{aligned}\end{equation}
Plugging \eqref{ha2} into \eqref{ha1}, then using the inequality
$2a_1 a_2\leq a_1^2+a_2^2$, we conclude \eqref{esti}. \qed
\end{proof}
 
Using \eqref{esti} into \eqref{hop}, we get that
\begin{equation} \label{mio}
\left(e^{-2\rho t}\(Y^2+Z^2\)(t)\right)^\prime
\leq 2 e^{-2\rho t} \left[
\alpha_0 (Y^2+Z^2)(t)+ 3C_1r_0
\| w_1-w_2\|_\infty^2
\right],
\end{equation}
where $\alpha_0:= 3C_1(3+2C_2) r_0-\rho$ is negative from our choice of $r_0$. Hence, from \eqref{mio}, for every $t\in (0,\infty)$, we deduce that 
\begin{equation} \label{yoo}  \left(e^{-2\rho t}\(Y^2+Z^2\)(t)\right)^\prime\leq 
2\rho\, e^{-2\rho t} \| w_1-w_2\|_\infty^2.
\end{equation}
By integrating \eqref{yoo}, we obtain \eqref{suf}, 
which completes Step \ref{step72}. \qed \end{proof}

\begin{step} \label{step73}
Let $r_0\in (0,\delta/2)$ be as in Step~\ref{step72} with $\rho=a/2$ and $6C_1(1+C_2)r_0<a C_2$.      
Then, the map $T: \mathcal X\to \mathcal X$ is well-defined, where for every $w\in \mathcal X$, we put
$$Tw\(y_0,z_0\):=-\int_0^\infty e^{-a t}h_1\left(w\left(\Phi_t^w(y_0,z_0)\right), \Phi_t^w(y_0,z_0)\right)\, dt\ \ \text{for all }(y_0,z_0)\in D_{r_0}. $$
 \end{step}

\begin{proof}[Proof of Step~\ref{step73}] For all $(y_0,z_0)\in D_{r_0}$, we define $(y(t),z(t)):=\Phi_t^w(y_0,z_0)$. We now observe that for all $t\geq 0$, 
$h_1(w(y(t),z(t)),y(t),z(t))$ stays bounded
since $\max\{|w(y(t),z(t))|,|y(t)|,|z(t)|\}\leq r_0$. Then, $Tw\(y_0,z_0\)$ is well-defined since $a>0$. From $w(0,0)=0$, we have $\Phi_t^w(0,0)=(0,0)$ for all $t\geq 0$, which yields $Tw \(0,0\)=0$. To prove that $Tw\in \mathcal X$, it remains to show that $Tw$ ranges in $[-r_0, r_0]$ and $Tw$ is $C_2$-Lipschitz. 
Indeed, using \eqref{hy}, for every $\(y_0,z_0\)\in D_{r_0}$, we find that 
\begin{equation}
|Tw\(y_0,z_0\)|\leq  C_1 \int_0^\infty e^{-at}(w^2(y,z)+y^2+z^2)\, dt
\leq \frac{3C_1r_0^2}{a}.
\end{equation} 
Since $3C_1r_0<a$, we have $|Tw\(y_0,z_0\)|\leq r_0$ so that 
$Tw$ ranges in $[-r_0,r_0]$.

We prove that $Tw$ is $C_2$-Lipschitz. We fix $(y_{0}^{(j)},z_{0}^{(j)})\in D_{r_0}$ for $j=1,2$, then define $(y_j(t),z_j(t)):=\Phi_t^{w}\big(y_{0}^{(j)},z_{0}^{(j)}\big)$ and 
$P_j(t):=(w(y_j,z_j),y_j,z_j)(t)$ for all $t\geq 0$. By the definition of $Tw$, we see that
\begin{equation} \label{hp}
\big| Tw\big(y_{0}^{(1)},z_{0}^{(1)}\big)-Tw\big(y_{0}^{(2)},z_{0}^{(2)}\big)\big|  \leq \int_0^\infty e^{-at}\left|h_1(P_1)-h_1(P_2)\right|\, dt.
\end{equation}
Since \eqref{lin} holds for $\phi=h_1$, using $w_1=w_2=w$ in \eqref{ha2}, we get that
$$  \left|h_1(P_1)-h_2(P_2)\right|  \leq  3 C_1(1+C_2)r_0\left| (y_1,z_1)-(y_2,z_2) \right|. $$
Using that $6C_1(1+C_2)r_0<aC_2$ and taking $\rho=a/2$ in Step~\ref{step72}, we arrive at 
\begin{equation} \label{hp2}  \left|h_1(P_1)-h_1(P_2)\right|  \leq  \frac{aC_2}{2} e^{\frac{a t}{2}} \big| \big(y_0^{(1)},z_0^{(1)}\big)-\big(y_0^{(2)},z_0^{(2)}\big)\big|. 
\end{equation}  
From \eqref{hp} and \eqref{hp2}, we see that $Tw$ is $C_2$-Lipschitz, completing Step~\ref{step73}. \qed \end{proof}

\begin{step} \label{step74} If also $12C_1 r_0(2+C_2)<a$ in Step~\ref{step73},  then $T$ is a contraction on $\mathcal X$. \end{step}

\begin{proof}[Proof of Step~\ref{step74}]
For $w_1,w_2\in \mathcal X$ and $(y_0,z_0)\in D_{r_0}$, we define
$$ (y_j(t),z_j(t)):=\Phi_t^{w_j}(y_0,z_0) \quad\text{and} \quad P_j(t):=(w_j(y_j(t),z_j(t)),y_j(t),z_j(t)) $$
for all $t\geq 0$ and $j=1,2$. As in Step~\ref{step72} with $\rho=a/2$, we 
obtain that
\begin{equation} \label{tu1}
\begin{aligned}
|h_1(P_1)-h_1(P_2)| &\leq 3C_1 r_0 |P_1-P_2|
 \leq 
3C_1 r_0\left[(1+C_2) |(y_1,z_1)-(y_2,z_2)|+\|w_1-w_2\|_\infty\right]\\
&\leq 3C_1 r_0 (2+C_2) e^{\frac{at}{2}}\|w_1-w_2\|_\infty . 
\end{aligned}
\end{equation}
Then, using \eqref{tu1} and our choice of $r_0$, we see that
$$ 
\left| (Tw_1-Tw_2)(y_0,z_0)\right|\leq  
\int_0^\infty e^{-at} |h_1(P_1)-h_1(P_2)|\,dt<\frac{1}{2}\|w_1-w_2\|_\infty.
 $$
Therefore, $T$ is $1/2$-Lipschitz, so it is a contraction mapping. This ends Step \ref{step74}. \qed
\end{proof}

\begin{step} \label{step75} Let $r_0\in (0,\delta/2)$ be as in Step~\ref{step74}. Then, 
there exists $w\in \mathcal X$ such that 
for every $(y_0,z_0)\in D_{r_0}$ and $x_0=w(y_0,z_0)$, the initial value system \eqref{syst:th:bis} has a solution $\vec {\mathcal Z}=(x,y,z)$ defined on $[0,\infty)$ and satisfying \eqref{mn}. \end{step}

\begin{proof}[Proof of Step~\ref{step75}] The choice of $r_0$\bibliographystyle{spmpsci}  in Step~\ref{step74} depends only on $a,|c|, C_1, C_2$. Picard's fixed point theorem yields the existence of $w\in \mathcal X$ such that $Tw=w$, where $T$ is given by Step~\ref{step73}, that is,  
\begin{equation}\label{eq:w}
w(y,z)=-\int_0^\infty e^{-a\xi}h_1\left(w\left(\Phi_\xi^w(y,z)\right), \Phi_\xi ^w(y,z)\right)\, d\xi
\end{equation}
for all $(y,z)\in D_{r_0}$. We fix $(y_0,z_0)\in D_{r_0}$ arbitrarily. 
We show that $\vec {\mathcal Z}(t)=(x(t),y(t),z(t))$ is a solution to the initial system \eqref{syst:th:bis}, subject to \eqref{mn}, where 
we define \begin{equation} \label{defxyz} (y(t),z(t)):=\Phi_t^w(y_0,z_0)\quad \text{and}\quad  x(t):=w(y(t),z(t))\ \text{for all } t\geq 0.\end{equation} 
Indeed, Step~\ref{step71} yields that $(y',z')=(h_2(x,y,z),cz+h_3(x,y,z))$ on $ [0,\infty)$. In view of $w(0,0)=0$ and $\lim_{t\to +\infty}(y(t),z(t))= (0,0)$, we get $\lim_{t\to +\infty} x(t)=0$, proving \eqref{mn}. Since 
$$\Phi_\xi^w(y(t),z(t))=\Phi_\xi ^w\circ\Phi_t ^w(y_0,z_0)=\Phi_{t+\xi}^w(y_0,z_0)=(y(t+\xi),z(t+\xi))$$ for all $\xi,t\geq 0$, from \eqref{eq:w} and \eqref{defxyz}, we obtain that 
\begin{equation}\label{exp:x} \begin{aligned}
x(t)&= -\int_0^\infty e^{-a\xi}h_1\left(w\left(\Phi_\xi ^w(y(t),z(t))\right), \Phi_\xi ^w(y(t),z(t))\right)\, d\xi\\
&= -\int_0^\infty e^{-a\xi}h_1\left(w\left(y(t+\xi),z(t+\xi)\right), y(t+\xi),z(t+\xi)\right)\, d\xi\\
&= -e^{at}\int_t^\infty e^{-a \theta}h_1\left(w\left(y(\theta),z(\theta)\right), y(\theta),z(\theta)\right)\, d\theta 
\end{aligned}\end{equation} 
for all $t\geq 0$. 
This ends Step~\ref{step75} since 
$ x\in C^1[0,+\infty)$ and $x'=ax+h_1(x,y,z)$ on $[0,\infty)$. \qed
\end{proof}
Using the definition of $\mathcal X$ and Step~\ref{step75}, we finish the proof of Theorem~\ref{71}. \qed \end{proof}

\medskip\noindent{\bf Acknowledgements:} Work on this project started during first author's visits to McGill University (July--August 2017) and University of Lorraine (September--October 2017).
She is very grateful for the support and hospitality of her co-authors while carrying out research at their institutions. 

% BibTeX users please use one of
%\bibliographystyle{spbasic}      % basic style, author-year citations
%\bibliographystyle{spmpsci}      % mathematics and physical sciences
%\bibliographystyle{spphys}       % APS-like style for physics
%\bibliography{}   % name your BibTeX data base

% Non-BibTeX users please use

\end{document}